\documentclass[11pt,a4paper]{article}
\usepackage{amscd}
\usepackage{enumerate}

\usepackage{amsmath, amssymb, latexsym}
\usepackage{amsfonts}
\usepackage{amsthm}
\usepackage{relsize}
\usepackage{setspace}
\usepackage{geometry}
\usepackage{url}
\usepackage{xspace}
\usepackage{tocloft}
\usepackage{graphics}
\usepackage{graphicx}
\usepackage{lscape}
\usepackage{microtype}
\usepackage{ulem}

\usepackage[usenames, dvipsnames]{color}
\usepackage[utf8]{inputenc}
\usepackage{tikz}

\usepackage[pagebackref=true]{hyperref}
\usepackage[alphabetic]{amsrefs}
\usepackage[english]{babel}

\usepackage{authblk}

\newtheorem{theorem}{Theorem}[section]
\newtheorem{proposition}[theorem]{Proposition}

\newtheorem{corollary}[theorem]{Corollary}
\newtheorem{lemma}[theorem]{Lemma}

\theoremstyle{definition}
\newtheorem{definition}[theorem]{Definition}

\newtheorem{remark}[theorem]{Remark}

\theoremstyle{problem}

\newcommand{\Aut}{\mathrm{Aut}}
\newcommand{\Opp}{\mathrm{Opp}}
\newcommand{\Ker}{\mathrm{Ker}}
\newcommand{\proj}{\mathrm{proj}}
\newcommand{\RR}{\mathbf{R}}
\newcommand{\ZZ}{\mathbf{Z}}

\newcommand{\CC}{\mathbf{C}}
\newcommand{\NN}{\mathbf{N}}

\newcommand{\CAT}{\mathrm{CAT}}
\newcommand{\cat}{$\mathrm{CAT}(0)$\xspace}
\newcommand{\Min}{\mathrm{Min}}
\newcommand{\Ch}{\mathrm{Ch}}
\newcommand{\Iso}{\mathrm{Is}}
\newcommand{\Stab}{\mathrm{Stab}}
\newcommand{\bd}{\partial}

\newcommand{\vareps}{\varepsilon}

\newcommand{\la}{\langle}
\newcommand{\ra}{\rangle}

\def\og{\leavevmode\raise.3ex\hbox{$\scriptscriptstyle\langle\!\langle$~}}
\def\fg{\leavevmode\raise.3ex\hbox{~$\!\scriptscriptstyle\,\rangle\!\rangle$}}


\title
{Gelfand pairs and strong transitivity \\for  Euclidean buildings}

\author[1]{Pierre-Emmanuel Caprace\thanks{F.R.S.-FNRS research associate, supported in part by the ERC (grant \#278469)}}
\author[1]{Corina Ciobotaru\thanks{Supported  by the FRIA}}

\affil[1]{Universit\'e catholique de Louvain, IRMP, Chemin du Cyclotron 2, bte L7.01.02, 1348 Louvain-la-Neuve, Belgique}

\date{First draft: April 22, 2013; revised: September 25, 2013}

\begin{document}

\maketitle

\begin{abstract}
Let $G$ be a locally compact group acting properly, by type-preserving automorphisms on a locally finite thick Euclidean building $\Delta$ and $K$ be the stabilizer of a special vertex  in $\Delta$. It is known that $(G,  K)$ is a Gelfand pair as soon as $G$ acts strongly transitively on $\Delta$; this is in particular the case when $G$ is a semi-simple algebraic group over a local field. We show a converse to this statement, namely: if $(G, K)$ is a Gelfand pair and $G$ acts cocompactly on $\Delta$, then the action is strongly transitive. The proof uses the existence of strongly regular hyperbolic elements in $G$ and their peculiar dynamics on the spherical building at infinity. Other equivalent formulations are also obtained, including the fact that $G$ is strongly transitive on $\Delta$ if and only if it is strongly transitive on the spherical building at infinity. 
\end{abstract}

\renewcommand{\thefootnote}{}
\footnotetext{\textit{MSC classification:} 20C08, 20E42, 22D10.}
\footnotetext{\textit{Keywords:} Euclidean building, Gelfand pair, Hecke algebra, BN-pair.}


\section{Introduction}

Given a locally compact group $G$ and a compact subgroup $K$ of $G$, we say that $(G, K)$ is a \textbf{Gelfand pair} if the convolution algebra $\mathcal A = C_{c}^{K}(G)$ of compactly supported, continuous, $K$-bi-invariant functions on $G$ is commutative. 
Gelfand pairs play a fundamental  role in the theory of unitary representations of semi-simple real Lie groups, and more generally, of semi-simple algebraic groups over local fields. Moreover, it has been observed that some non-linear locally compact groups are also naturally endowed with a Gelfand pair, which can be used to provide a comprehensive description of  the spherical part of their unitary dual: this is the case of the full automorphism group of a locally finite regular tree (see \cite{Ol} and \cite{FigaNebbia}), as well as some more general groups of tree automorphisms (see \cite{Amann}). On the other hand, the representation theory of most groups  which do not possess a Gelfand pair remains mysterious to a large extent. 

This provides a clear motivation to determine whether a given locally compact group is  endowed with a Gelfand pair. Notice that all classes of groups mentioned above enjoy a common geometric feature: the existence of a proper action on a Riemannian symmetric space or a Euclidean building. In both cases, the induced action on the associated spherical building at infinity is strongly transitive. Therefore, it is natural to ask whether this geometric framework  can be relaxed and still provide examples of Gelfand pairs. Negative results in this direction have been obtained in~\cite{Lec10} and~\cite{APVM11}, where it is proved that, among groups acting strongly transitively on a locally finite thick building of arbitrary type, one has a Gelfand pair only if the building is Euclidean and the compact subgroup is the stabilizer of a special vertex. The main result of the present paper shows moreover that, in the Euclidean case, the hypothesis of strong transitivity cannot be relaxed:

\begin{theorem}
\label{thm:main-thm}
Let $G$ be a locally compact group acting continuously and  properly by type-preserving automorphisms on a locally finite thick Euclidean building $\Delta$.  Then the following are equivalent:
\begin{enumerate}[(i)]
\item
\label{thm:main-thm-i}
$G$ acts strongly transitively on $\Delta$;

\item
\label{thm:main-thm-ii}
$G$ acts strongly transitively on the spherical building at infinity $\partial \Delta$;

\item
\label{thm:main-thm-iii}
$G$ acts cocompactly on $\Delta$,  and $(G,K)$ is a Gelfand pair, where $K$ is the stabilizer of a special vertex.
\end{enumerate} 
\end{theorem}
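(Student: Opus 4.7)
The strategy is to establish the cycle $(i) \Rightarrow (iii) \Rightarrow (ii) \Rightarrow (i)$. The first implication is the classical direction already cited in the introduction: strong transitivity of $G$ on $\Delta$ yields an affine BN-pair in which $K$ is a special maximal parahoric subgroup, cocompactness is immediate from transitivity on chambers, and the commutativity of $C_c^K(G)$ follows from the Satake--Macdonald theory of the affine Hecke algebra, which ultimately rests on the anti-involution supplied by the opposition involution of the spherical Weyl group.

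For $(iii) \Rightarrow (ii)$, which is the heart of the theorem, I would start by noting that cocompactness together with properness forces $G$ to contain hyperbolic isometries, and a genericity argument then produces \emph{strongly regular} hyperbolic elements $g \in G$, i.e.\ elements whose translation axis lies in a regular direction of some apartment $A$, so that the endpoints $\xi^+, \xi^- \in \partial \Delta$ of this axis are opposite chambers and are the only chambers at infinity fixed by $g$. The Gelfand pair hypothesis is converted via Gelfand's trick into the statement $KgK = Kg^{-1}K$ for every $g \in G$, yielding for strongly regular $g$ elements $k_1, k_2 \in K$ with $k_1 g k_2 = g^{-1}$. Exploiting the attracting--repelling dynamics $g^n \xi \to \xi^+$ for $\xi$ in general position, together with the identity above, I would extract from $\{k_1, k_2\}$ an element of $K$ which stabilizes the apartment at infinity $\partial A$ and acts on it as the longest element of the spherical Weyl group, swapping $\xi^+$ and $\xi^-$. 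Varying $g$ so that its axis samples sufficiently many directions at the special vertex $v$ fixed by $K$, and combining with the transitivity of $K$ on sectors at $v$ (itself following from cocompactness and properness of the action), produces $G$-transitivity on pairs (apartment at infinity, chamber in it), which is strong transitivity on $\partial \Delta$.

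For $(ii) \Rightarrow (i)$, strong transitivity on $\partial \Delta$ gives $G$ a spherical BN-pair structure, and since an apartment of $\Delta$ is determined by its boundary, $G$ acts transitively on apartments of $\Delta$; it remains only to show that the setwise stabilizer of an apartment $A$ acts on $A$ as the full affine Weyl group. The spherical part is furnished by the BN-pair, while the translation part is supplied by hyperbolic elements stabilizing $A$ and fixing $\partial A$ chamberwise, obtained from cocompactness together with the structure of the parabolic subgroups fixing chambers of $\partial \Delta$. The decisive obstacle is clearly the implication $(iii) \Rightarrow (ii)$: the passage from the purely algebraic symmetry $KgK = Kg^{-1}K$ to a concrete element of $K$ realizing the spherical opposition on a prescribed apartment at infinity demands a careful dynamical analysis of the interaction between the strongly regular element $g$ and the compact group $K$, and relies crucially on the fact that the attracting dynamics of a strongly regular hyperbolic element leave very few configurations on $\partial\Delta$ invariant.
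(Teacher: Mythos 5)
Your overall cycle $(i)\Rightarrow(iii)\Rightarrow(ii)\Rightarrow(i)$ is reasonable, and $(i)\Rightarrow(iii)$ is indeed the classical direction, but the step $(iii)\Rightarrow(ii)$ — which you correctly identify as the heart of the matter — rests on two claims that fail. First, Gelfand's trick is one-directional: the double-coset symmetry $KgK=Kg^{-1}K$ for all $g$ \emph{implies} commutativity of $C_c^K(G)$, but commutativity does not imply that symmetry (already any non-trivial abelian $G$ with $K=\{e\}$ shows this; there is no general way to recover double-coset information from commutativity alone). So you cannot extract elements $k_1,k_2\in K$ with $k_1gk_2=g^{-1}$ from the Gelfand hypothesis, and the whole subsequent construction of an element of $K$ realizing the opposition on $\partial A$ has no starting point. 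Second, your parenthetical claim that transitivity of $K$ on the sectors based at the special vertex $v$ ``follows from cocompactness and properness'' is false: given transitivity on apartments, that transitivity is essentially equivalent to strong transitivity and is exactly what has to be proved. Cocompact proper actions that are not strongly transitive exist (for instance cocompact lattices, whose vertex stabilizers are finite), and Theorem~\ref{transitivity2} shows that whenever the action is not strongly transitive the compact open subgroup $K$ has \emph{infinitely many} orbits of chambers at infinity. If your claim were true, the Gelfand hypothesis would be superfluous, which signals the circularity.

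For comparison, the paper proves $(iii)\Rightarrow(i)$ contrapositively and never uses any symmetry $KgK=Kg^{-1}K$: assuming the cocompact action is not strongly transitive, either $G$ fixes a point of $\partial\Delta$, in which case $G$ is not unimodular and hence cannot carry a Gelfand pair, or else (by characterization (iv) of Theorem~\ref{transitivity2}) $K$ has infinitely many chamber-orbits at infinity; combining this with the existence of strongly regular hyperbolic elements (Theorem~\ref{thm:ExistenceStronglyReg}) and their attracting dynamics (Proposition~\ref{prop:DynamRegular}), Lemma~\ref{lem:tech} produces strongly regular $\alpha,\beta$ with $\alpha\beta\notin K\beta K\alpha K$, and one then checks directly that the characteristic functions of $K\alpha K$ and $K\beta K$ fail to commute by evaluating both convolutions at $\alpha\beta$. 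I would also note that your sketch of $(ii)\Rightarrow(i)$ glosses over the genuinely delicate point — showing that the horospherical group $G_c^0$ is transitive on the chambers opposite $c$, which is where local finiteness of $\Delta$, closedness of $G$, and again the dynamics of strongly regular elements enter (Lemmas~\ref{lem:ST:criterion}--\ref{lem:Levi:decom}) — but the fatal gaps are the two in $(iii)\Rightarrow(ii)$.
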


In addition to those contained in Theorem~\ref{thm:main-thm}, further equivalent characterizations, of independent interest, are presented in Theorem~\ref{transitivity2} below.

We remark that the convolution algebra $\mathcal A= C_{c}^{K}(G)$ is closely related to algebras of operators on the space of functions over the set of vertices of the building $\Delta$. Those algebras can be defined and studied even when $\Delta$ does not admit any strongly transitive automorphism group; this viewpoint has been adopted in a recent work by J.~Parkinson~\cite{Parkinson}.

Notice that the implication from (\ref{thm:main-thm-i}) to (\ref{thm:main-thm-ii}) is easy and well-known to hold in general for abstract groups acting on (not necessarily locally finite) thick Euclidean buildings. For the converse implication `(\ref{thm:main-thm-ii}) $\Rightarrow$ (\ref{thm:main-thm-i})', the major part of our proof, which is given in  Section~\ref{sect:Eucl-strong-trans} below, also holds without assuming that $G$ is locally compact nor that $\Delta$ is locally finite. This hypothesis is only used in the final stage of the proof. However, in Remark~\ref{rem:LocallyInfinite} below, we describe a  concrete strategy towards proving this equivalence in full generality. In the special  case where $\Delta$ is a tree, the equivalence between  (\ref{thm:main-thm-i}) and (\ref{thm:main-thm-ii}) is due to Burger--Mozes \cite[Lemma~3.1.1]{BM}.

The fact that (\ref{thm:main-thm-i}) implies (\ref{thm:main-thm-iii})  is well-known (see \cite[Corollary 4.2.2]{Matsum}). The remaining implication from (\ref{thm:main-thm-iii}) to (\ref{thm:main-thm-i})  is established in Section~\ref{sec:Gelfand-pair}. The special case when $\Delta$ is a tree is mentioned without proof in  the book \cite[page 48]{FigaNebbia}. The proof in the case of Euclidean buildings of arbitrary dimension requires to introduce several new ingredients: further characterizations of the equivalences of (\ref{thm:main-thm-i}) and (\ref{thm:main-thm-ii}), which are given by Theorem~\ref{transitivity2}, as well as the existence in $G$ of automorphisms with a peculiar dynamics, which are called \textbf{strongly regular hyperbolic elements}. We define those as hyperbolic automorphisms of $\Delta$, whose translation axes are contained in a unique apartment, and cross every wall of that apartment. They play a similar role as the regular semi-simple elements in the case of algebraic groups, or as the hyperbolic isometries in the case of tree automorphism groups. The following existence result, established in Section~\ref{sec:strong-elem}, is of independent interest. 

\begin{theorem}\label{thm:ExistenceStronglyReg}
Let $G$ be a group acting cocompactly by automorphisms on a locally finite Euclidean building $\Delta$. Then $G$ contains a strongly regular hyperbolic element. 
\end{theorem}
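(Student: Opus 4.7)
The plan is to find a hyperbolic element $g \in G$ whose translation axis $L$ has a \emph{regular} direction at infinity, i.e., whose endpoints lie in the interiors of opposite chambers of the spherical building $\partial \Delta$. Regularity forces the direction of $L$ to avoid every wall hyperplane at infinity, so $L$ is transverse to every wall of any apartment containing it and crosses each of them; moreover, by a standard fact in Euclidean building theory, a regular geodesic line in $\Delta$ is contained in a unique apartment (realised as its parallel set). Together these say $g$ is strongly regular, so the task reduces to producing a hyperbolic element of $G$ with regular axis direction.

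I would proceed in two stages. First, produce any hyperbolic element $g_0 \in G$: the case $\dim \Delta = 0$ is trivial, and otherwise $\Delta$ is an unbounded proper $\CAT(0)$ space, every automorphism of a locally finite Euclidean building is semi-simple, and a proper cocompact isometric action on an unbounded proper $\CAT(0)$ space contains hyperbolic isometries by standard $\CAT(0)$ theory. Second, upgrade $g_0$ to an element with regular axis direction. A key observation is that simple $G$-conjugation of $g_0$ cannot suffice: $G$ preserves the simplicial structure on $\partial \Delta$, so the $G$-orbit of a non-regular boundary point remains non-regular. What is needed instead is a \emph{combination} of hyperbolic elements with distinct axis directions in a common apartment, since in such an apartment the product of two translations has translation vector equal to the sum of theirs, and regular directions can thus be synthesised from non-regular ones. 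A toy illustration is a product of two trees, where hyperbolic elements with axes of type $(*, 0)$ and $(0, *)$ combine to give an element with regular axis of type $(*, *)$.

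The main obstacle is this second stage. Concretely, one must (i) exhibit two commuting hyperbolic elements of $G$ whose joint minimum set is a flat of maximal dimension, and (ii) control the axis direction of products. For (i), I would appeal to the Flat Torus Theorem: commuting hyperbolic elements of $G$ produce a flat preserved by translations, and cocompactness together with the rank of $\Delta$ should ensure that such a flat can be taken to be an entire apartment $A$, on which one has a $\ZZ^{\dim \Delta}$-action by translations. The translation vectors then span the direction space of $A$, so for suitable powers $n_1, \dots, n_k$ of the commuting generators, the product has translation vector in the regular stratum; the corresponding element of $G$ is a regular hyperbolic element. With this in hand, the structural facts recalled in the first paragraph give uniqueness of the containing apartment and the wall-crossing property, completing the proof.
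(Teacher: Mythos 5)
Your reduction of the problem to producing a hyperbolic element of $G$ whose axis has regular direction is sound and matches the paper's Lemma~\ref{lem:CharSRH}, but the way you propose to produce such an element has a genuine gap, and it occurs exactly at the step you yourself flag as the main obstacle. Your step (i) asks for commuting hyperbolic elements of $G$ whose translation directions span an apartment, so that the Flat Torus Theorem yields a $\ZZ^{\dim\Delta}$ of translations of $A$ from which a regular translation vector can be synthesised. But nothing in the hypotheses provides such a subgroup, and the Flat Torus Theorem goes in the wrong direction: it takes as input a free abelian group of rank $n$ acting \emph{properly} by semisimple isometries and outputs an invariant flat; it does not convert the mere existence of maximal flats in $\Delta$ plus cocompactness of $G$ into a free abelian subgroup of $G$. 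What you need is a flat-closing statement, which is at least as strong as Theorem~\ref{thm:ExistenceStronglyReg} itself (a full lattice of translations on an apartment in particular contains strongly regular elements), so the argument is circular in strength. Even in the discrete, properly acting case this step is the hard content, handled by Ballmann--Brin via Poincar\'e recurrence; here $G$ is an arbitrary group acting cocompactly by automorphisms, not assumed discrete or proper, so both your Stage 1 citation (``proper cocompact isometric action'') and the properness hypothesis of the Flat Torus Theorem are unavailable as stated.

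The paper avoids commuting elements altogether. It first reduces to type-preserving automorphisms (finite index, still cocompact), then takes a strongly regular geodesic line $\ell$ inside a \emph{single} apartment, obtained from the Weyl group of the Coxeter complex (Lemma~\ref{existance_reg_element}), passing through special vertices at bounded gaps. Local finiteness makes the set of special vertices discrete, and cocompactness gives finitely many $G$-orbits of them; a pigeonhole/limiting argument in the spirit of Swenson (Lemma~\ref{lem:basic-const} and Proposition~\ref{prop:TechSRH}) then produces elements $g_m, g_n \in G$ agreeing on longer and longer segments of $\ell$, so that $h = g_m^{-1}g_n$ translates a long subsegment of a strongly regular line along itself; Lemma~\ref{reg_hyp_element} shows $h$ is hyperbolic with axis containing that segment, and the local criterion of Lemma~\ref{lem:LocalCrit:SRL} (two special vertices in the interiors of opposite sectors) certifies that the axis, hence $h$, is strongly regular. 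If you want to salvage your outline, you would need to replace the appeal to the Flat Torus Theorem by an argument of this recurrence/pigeonhole type, since that is where the existence statement is actually earned.
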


The special case when $G$ is discrete can be deduced from Ballmann--Brin~\cite{BB}, where they use a Poincar\'e recurrence argument on the geodesic flow.  Our proof is different and is inspired by an argument due to E.~Swenson~\cite{Swe99}.

\medskip
In our considerations, the relevance of strongly regular hyperbolic elements comes from their peculiar dynamical properties, which show some resemblance with the dynamics of hyperbolic isometries of rank one symmetric spaces (or more generally, of Gromov hyperbolic CAT(0) spaces):

\begin{proposition}\label{prop:Dynamics}
Let $\Delta$ be a Euclidean building and $a \in \Aut(\Delta)$ be a type-preserving strongly regular element with unique translation apartment $A$.  Then, for any point  $\xi$ in the visual boundary $ \partial \Delta$, the limit $\lim\limits_{n \to \infty} a^n(\xi)$ exists (in the cone topology) and  belongs to $ \bd A$. In particular, the fixed-point-set of $a$ in $\bd \Delta$ is $\bd A$. 
\end{proposition}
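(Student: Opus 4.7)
The plan is to exploit the retraction $\rho : \Delta \to A$ based at the ideal chamber $\sigma_-$ opposite to the Weyl chamber $\sigma_+ \subset \bd A$ containing $\eta_+$. Since $a$ is strongly regular, $\eta_+$ lies in the interior of the top-dimensional chamber $\sigma_+$, so $\sigma_-$ is also top-dimensional; moreover $a$ restricts on $A$ to a translation, hence fixes $\bd A$ pointwise, and in particular fixes $\sigma_-$.

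Given $\xi \in \bd \Delta$, I would first invoke the spherical building axiom that any two simplices lie in a common apartment, applied to $\sigma_-$ and the simplex of $\xi$, to produce a spherical apartment containing both. Since every apartment of $\bd \Delta$ is the boundary of some Euclidean apartment, this yields an apartment $A_\xi$ of $\Delta$ with $\sigma_-, \xi \in \bd A_\xi$. The retraction $\rho$ is $1$-Lipschitz, is the identity on $A$, restricts to an isometry on every apartment having $\sigma_-$ on its boundary (in particular on $A_\xi$ and on $a^n(A_\xi)$ for every $n \in \ZZ$), and satisfies $\rho \circ a = a \circ \rho$ since $a$ preserves $A$ and fixes $\sigma_-$. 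I then set $\zeta \in \bd A$ to be the image of $\xi$ under the boundary extension of the isometry $\rho|_{A_\xi} : A_\xi \to A$.

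The heart of the proof is to show $a^n(\xi) \to \zeta$ in the cone topology. Fix a basepoint $p$ on the axis $\ell$. By standard building theory, $A \cap A_\xi$ contains a sector $S_0$ going to $\sigma_-$, based at some $q_0 \in A$; applying $a^n$, the set $a^n(S_0) \subseteq A \cap a^n(A_\xi)$ is again a sector toward $\sigma_-$, based at $a^n(q_0)$, which recedes to infinity along $\ell$ toward $\eta_+ \in \mathrm{int}(\sigma_+)$. Because $\sigma_+$ and $\sigma_-$ are opposite top-dimensional Weyl chambers at infinity, these sectors eventually engulf any prescribed compact subset of $A$; in particular, for every $T > 0$, the initial segment of length $T$ of the ray $[p, \zeta) \subset A$ lies inside $a^n(S_0) \subseteq A \cap a^n(A_\xi)$ once $n$ is large enough. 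On the other hand, $\rho|_{a^n(A_\xi)}$ fixes $A \cap a^n(A_\xi)$ pointwise and, by equivariance with $a^n$, maps the unique \cat geodesic ray $[p, a^n(\xi)) \subset a^n(A_\xi)$ (which equals the corresponding ray in $\Delta$, by convexity of apartments) isometrically onto $[p, \zeta) \subset A$. Inverting this isometry on the initial segment of $[p, \zeta)$ lying in the identity region shows that $[p, a^n(\xi))$ and $[p, \zeta)$ coincide on $[0, T]$ for $n$ large, which is precisely cone-topology convergence.

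The fixed-point statement then follows at once: any $\xi$ fixed by $a$ equals $\lim_n a^n(\xi) \in \bd A$, while $\bd A$ is pointwise fixed because $a|_A$ is a translation. The main technical obstacle is the ray-coincidence step, which requires carefully combining the growth of the sectors $a^n(S_0)$ within the Euclidean apartment $A$, the $a$-equivariance of $\rho$, and the uniqueness of \cat geodesic rays to boundary points.
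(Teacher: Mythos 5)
Your argument is correct and follows essentially the same route as the paper: an apartment whose boundary contains $\xi$ and the repelling chamber $c_-$, a common sector with $A$ pointing to $c_-$ whose images under $a^n$ engulf arbitrarily large compact pieces of $A$ thanks to strong regularity, forcing longer and longer initial segments of $[p,a^n(\xi))$ to lie along a fixed ray of $A$. The only cosmetic difference is that you identify the limit as $\rho_{A,c_-}(\xi)$ from the outset via the equivariance $\rho_{A,c_-}\circ a = a\circ \rho_{A,c_-}$ and the fact that the retraction fixes the growing common region pointwise, whereas the paper first produces the limit through an exit-point/parallelism computation in the apartments $a^n(B)$ and only afterwards identifies the limit map with the retraction.
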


A slightly more precise statement will be established in Proposition~\ref{prop:DynamRegular} below. 

\bigskip
The paper is organized as follows. Section~\ref{sec:strong-elem} is devoted to strongly regular hyperbolic automorphisms. Various properties are described, and the proofs of Theorem~\ref{thm:ExistenceStronglyReg} and Proposition~\ref{prop:Dynamics} are completed.  Section~\ref{sect:Eucl-strong-trans} is devoted to strong transitivity. The goal there is to establish Theorem~\ref{transitivity2}, which contains the equivalence between (\ref{thm:main-thm-i}) and (\ref{thm:main-thm-ii}) from Theorem~\ref{thm:main-thm}, as well as some additional characterizations of independent interest. A good deal of Section~\ref{sect:Eucl-strong-trans} holds in the general context of abstract groups acting   on thick Euclidean buildings. Finally, Section~\ref{sec:Gelfand-pair} is devoted to Gelfand pairs where the proof of Theorem~\ref{thm:main-thm} is  completed. 

\subsection*{Acknowledgement} We thank Jean L\'ecureux for drawing our attention to the references \cite{Parkinson} and \cite{APVM11}. We also thank the referee for his/her comments.

\section{Strongly regular hyperbolic automorphisms}
\label{sec:strong-elem}

Throughout the paper, we view Euclidean buildings both as \cat spaces and as simplicial complexes. We refer to \cite{AB} and \cite{BH99} for the main concepts and basic properties, which will be used freely in the paper.

The goal of this section is to study the basic properties of strongly regular hyperbolic automorphisms of Euclidean buildings and to prove Theorem~\ref{thm:ExistenceStronglyReg}. 

\subsection{Regular and strongly regular isometries}

The following definition is inspired by the notion of $\RR$-hyper-regular elements, appearing in Prasad--Raghunathan's paper \cite{PR72}, or more recently in \cite{BL93}, where they are called $h$-regular elements.

\begin{definition}
Let $X$ be a $\CAT(0)$ space, $\gamma$ be a hyperbolic isometry of $X$ and let 
$$\Min(\gamma):=\{x \in X \; | \; d(x, \gamma(x))=|\gamma| \},$$ 
where $|\gamma|$ denotes the translation length of $\gamma$. We say that $\gamma$ is a \textbf{regular hyperbolic isometry} of $X$ if $\Min(\gamma)$ is at bounded Hausdorff distance from a maximal flat of $X$. 
\end{definition}

It follows that the visual boundary of $\Min(\gamma)$ is a round sphere in $\partial X$. In view of \cite[Prop.~2.1]{Leeb}, this implies that $\Min(\gamma)$ contains a $\gamma$-invariant flat of $X$, which therefore lies at a bounded Hausdorff distance from the corresponding maximal flat. Notice that in a general \cat space, maximal flats need not exist. Moreover, two different maximal flats need not have the same dimension. 

In the case when $X$ is a Riemannian symmetric space or a Bruhat--Tits building, the full isometry group $\Iso(X)$ is known, is very big and it acts strongly transitively on the spherical building at infinity of $X$. Using the known properties of $\Iso(X)$ in those cases, it is easy to construct hyperbolic isometries $\gamma \in \Iso(X)$ such that $\Min(\gamma)$ is a maximal flat. In particular, such elements are regular hyperbolic isometries  in the sense above. Moreover, if $X$ is a proper \cat space and $\gamma$ is a rank one element (in the sense of Ballmann~\cite{Ballmann}), then $\gamma$ is also regular hyperbolic; in that case the relevant maximal flat is a rank one geodesic line of $X$ (see \cite[Chapter~3]{Ballmann}). 

In the present paper we focus on the case where $X$ is a Euclidean building. In this case the maximal flats coincide with the apartments. Therefore, if $\gamma$ is a regular hyperbolic isometry, there is a unique apartment $A$ contained in $\Min(\gamma)$. However, even if $A = \Min(\gamma)$, it could be that the $\gamma$-axes (which are pairwise parallel) are \textbf{singular}, i.e., are parallel to some wall of $A$. The case where this does not happen is central to our purposes and we give it a special name. 

\begin{definition}
Let $\Delta$ be a  Euclidean building and $\gamma \in \Aut(\Delta)$. We say that $\gamma$ is  \textbf{strongly regular hyperbolic} if $\gamma$ is regular hyperbolic and if one (and hence all)  of its translation axes crosses all the walls of the unique apartment contained in $\Min(\gamma)$.
\end{definition}

We record the following easy characterization. 

\begin{lemma}
\label{lem:CharSRH}
Let $\Delta$ be a Euclidean building and $\gamma \in \Aut(\Delta)$ be a type-preserving automorphism. Then $\gamma$ is strongly regular hyperbolic if and only if $\gamma$ is a hyperbolic isometry and the two endpoints of one (and hence all) of its translation axes lie in the interior of two opposite chambers of the spherical building at infinity. 

In particular, if $\gamma$ is strongly regular hyperbolic, then $\Min(\gamma)$ is an apartment. 
\end{lemma}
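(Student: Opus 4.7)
The plan is to leverage a rigidity fact about parallel sets in Euclidean buildings: if $\xi, \eta \in \partial \Delta$ lie in the interiors of two opposite chambers of the spherical building at infinity, then the parallel set $P(\xi, \eta)$, i.e.\ the union of all geodesic lines of $\Delta$ going from $\xi$ to $\eta$, coincides with a single apartment of $\Delta$. I take this classical input from the general theory of Euclidean buildings (see e.g.\ \cite{AB}, \cite{BH99}) as the central tool.

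For the ``only if'' direction, suppose $\gamma$ is strongly regular hyperbolic, with unique apartment $A \subseteq \Min(\gamma)$ and translation axis $\ell \subseteq A$ crossing every wall of $A$. Then the asymptotic direction of $\ell$ in the spherical Coxeter complex $\partial A$ lies on no wall, hence in the interior of a top-dimensional simplex $C$ of $\partial A$. Under the simplicial embedding $\partial A \hookrightarrow \partial \Delta$, $C$ is a chamber of $\partial \Delta$, so the endpoint $\xi$ of $\ell$ lies in the interior of a chamber of $\partial \Delta$; the other endpoint $\eta$ lies in the antipodal---hence opposite---chamber.

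For the converse, suppose $\gamma$ is hyperbolic with an axis $\ell$ whose endpoints $\xi, \eta$ are interior points of opposite chambers $C, C'$ of $\partial \Delta$. Since $\gamma$ preserves $\ell$, it fixes $\xi$ and $\eta$; and since $\gamma$ is type-preserving, fixing the interior point $\xi$ of the simplex $C$ forces $\gamma$ to fix $C$ (and similarly $C'$) pointwise. Let $A := P(\xi, \eta)$, which is an apartment by the opening input, with $\partial A$ containing $C$ and $C'$. Then $\gamma$ stabilises $A$ (because it permutes the lines from $\xi$ to $\eta$), and it fixes $\partial A$ pointwise, because a simplicial automorphism of a spherical Coxeter complex that fixes a chamber pointwise is trivial. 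An isometry of $A \cong \RR^n$ inducing the identity on $\partial A$ is a translation; so $\gamma|_A$ is translation by a vector of norm $|\gamma|$ in the direction of $\xi$. Thus $A \subseteq \Min(\gamma)$, and conversely every axis of $\gamma$ is a geodesic line from $\xi$ to $\eta$, hence lies in $P(\xi, \eta) = A$. Therefore $\Min(\gamma) = A$. Finally, the direction of $\ell$ being regular in $\partial A$ means $\ell$ is parallel to no wall of $A$, i.e.\ crosses every wall, so $\gamma$ is strongly regular hyperbolic, and the ``in particular'' clause drops out.

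The main obstacle I anticipate is justifying the parallel set rigidity invoked throughout: that $P(\xi, \eta)$ is exactly an apartment when $\xi, \eta$ lie in opposite chambers of $\partial \Delta$, rather than a larger product of an apartment with a non-trivial factor. This is a standard but non-trivial input from the theory of Euclidean buildings, and any write-up would need to cite or state it carefully; the remaining ingredients---type-preserving simplicial rigidity, and the fact that an isometry of $\RR^n$ inducing the identity on the sphere at infinity must be a translation---are elementary.
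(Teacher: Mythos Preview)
Your proof is correct and follows essentially the same route as the paper's. The only cosmetic difference is that you invoke the ``parallel set rigidity'' $P(\xi,\eta)=A$ for regular opposite boundary points, whereas the paper phrases this as the standard building-theoretic fact that two opposite chambers $c_+, c_-$ of $\partial\Delta$ are contained in a \emph{unique} apartment $A$, and then observes that any geodesic line with endpoints in $c_+, c_-$ must lie in $A$; these are the same statement under a different name, and the remaining steps (type-preservation forces $\gamma$ to fix $\partial A$ pointwise, hence act by translation on $A$, whence $\Min(\gamma)=A$) are identical.
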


The apartment $\Min(\gamma)$ will henceforth be called the \textbf{translation apartment} of $\gamma$; it is uniquely determined. 

\begin{proof}[Proof of Lemma~\ref{lem:CharSRH}]
The `only if' part is clear. Conversely, if $\gamma$ is hyperbolic and if the endpoints of some $\gamma$-axis, say $\ell$, lie respectively in the interior of two opposite chambers $c_+, c_- \in \Ch(\partial \Delta)$, then the same holds for all $\gamma$-axes. Consider now the unique apartment $A$ of $\Delta$ whose boundary contains $c_+$ and $c_-$. Since $\gamma$ is type-preserving and it fixes the two endpoints of its translation axes, it follows that $\gamma$ fixes $c_-$ and $c_+$ pointwise. Therefore, we conclude that the apartment $A$ is invariant under $\gamma$. In particular, we obtain that $\gamma$ acts trivially on $\partial A$; in other words $\gamma$ acts by translations on $A$. This proves that $A \subseteq \Min(\gamma)$. To prove the converse inclusion, we remark  that any translation axis $\ell'$ of $\gamma$ has the same endpoints as $\ell$, and must thus be entirely contained in $A$. Thus $A = \Min(\gamma)$. Since the endpoints of $\ell$ lie in the interior of the chambers $c_-,c_+$, the line $\ell$ cannot be parallel to any wall of $A$. Therefore, it meets every such wall. This confirms that $\gamma$ is strongly regular hyperbolic.
\end{proof}

Lemma \ref{lem:CharSRH} motivates the following.

\begin{definition}
Let $\Delta$ be a Euclidean building. A geodesic line $\ell$ is called  \textbf{strongly regular} if its endpoints lie in the interior of two opposite chambers of the spherical building at infinity.
\end{definition}

Thus, in this case, the line $\ell$ is contained in a unique apartment $A$ and $\ell$ crosses every wall of $A$. Lemma~\ref{lem:CharSRH} shows that a type-preserving hyperbolic automorphism is strongly regular if and only if its translation axes are strongly regular.

Notice that strongly regular geodesic lines are defined by means of an asymptotic property, namely a property of its endpoints at infinity. 
An easy but crucial observation is the existence of a local criterion to recognize strongly regular lines:

\begin{lemma}\label{lem:LocalCrit:SRL}
Let $\Delta$ be a  Euclidean building and $\ell$ be a geodesic line. If $\ell$ contains two special vertices which are contained, respectively, in the interiors of two opposite sectors in a common apartment of $\Delta$, then $\ell$ is strongly regular. Conversely, if $\ell$ is strongly regular and contains three special vertices, then at least two of them must be contained in the interiors of two opposite sectors of a common apartment. 
\end{lemma}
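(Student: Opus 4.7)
The plan is to prove the two implications separately, each reducing to an elementary convexity observation in the Euclidean apartment. I interpret ``two opposite sectors in a common apartment $A$'' in the standard building-theoretic sense: two sectors of $A$ sharing a common apex $p$ and with opposite germs at infinity. (Under the weaker reading of merely ``opposite germs at infinity,'' the forward implication fails: in an $\tilde A_2$ apartment, the special vertices $v = (3/2, \sqrt 3/2)$ and $w = (-3/2,\sqrt 3/2)$ lie in opposite sectors based, respectively, at $(0,0)$ and $(0,\sqrt 3)$, yet the line through them is horizontal, hence parallel to a wall and not strongly regular.)

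For the forward direction, I would fix the common apex $p$ of the two opposite sectors $S,S'$ and identify the apartment $A$ with a Euclidean space of origin $p$, so that $S, S'$ become opposite open convex cones $\mathrm{int}(C_\pm)$ with $C_- = -C_+$. The hypothesis gives $v - p \in \mathrm{int}(C_+)$ and $w - p \in \mathrm{int}(C_-)$, hence $w - v = (w-p) + (p - v) \in \mathrm{int}(C_-) + \mathrm{int}(C_-) \subseteq \mathrm{int}(C_-)$ by convexity of the cone (sums of interior points of a convex cone are again interior). This means the asymptotic direction of $\ell$ past $w$ lies in $\mathrm{int}(c_-)$; symmetrically the opposite endpoint lies in $\mathrm{int}(c_+)$, so $\ell$ is strongly regular.

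For the converse, assume $\ell$ is strongly regular with endpoints $\xi_\pm$ in the interiors of opposite chambers $c_\pm$ of $\partial A$ (where $A$ is the unique apartment containing $\ell$), and fix three special vertices $v_1, v_2, v_3$ along $\ell$ in the direction of $\xi_+$. I would take the middle special vertex $v_2$ as common apex and consider the two opposite sectors $S_\pm$ of $A$ based at $v_2$ with germs $c_\pm$. Since $v_3 - v_2$ is a positive scalar multiple of the unit tangent to $\ell$ pointing toward $\xi_+$, and this tangent lies in the open cone at $v_2$ associated with $c_+$, we have $v_3 \in \mathrm{int}(S_+)$, and symmetrically $v_1 \in \mathrm{int}(S_-)$. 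The pair $(v_1, v_3)$ is therefore the required one.

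No real obstacle is expected: the lemma is an elementary Euclidean computation once the conventions are fixed. The one point worth emphasizing is the function of the third special vertex in the converse — the common apex lies on the boundary of both opposite sectors, so neither of the two vertices to be placed in the interiors of those sectors can itself play the role of the apex, and a middle special vertex $v_2$ is needed for that purpose.
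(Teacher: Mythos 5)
Your converse argument is fine (it matches the paper's intent, and your observation that the middle special vertex serves as the common apex is exactly the point), and your reading of ``opposite sectors'' as sectors with a common base point agrees with the paper's usage. The forward direction, however, has a genuine gap: you silently assume that $\ell$ lies in the apartment $A$. The hypothesis only places the two special vertices $v,w$ in $A$; by convexity of apartments the segment $[v,w]$ lies in $A$, but nothing forces the rest of $\ell$ to stay there --- geodesic extensions in a building are not unique (already in a tree a geodesic line can branch off an apartment at a vertex, and in higher rank a line can leave $A$ at any wall crossing in a panel). Consequently ``the asymptotic direction of $\ell$ past $w$'' is not computed by the vector $w-v$ inside $A$: the endpoints of $\ell$ live in $\partial\Delta$, and relating them to the $A$-direction of the bounded segment $[v,w]$ is precisely the local-to-global content of the lemma, which your cone computation does not address.

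The missing step is the one the paper supplies. Since your computation shows the direction of $[v,w]$ in $A$ is regular, the open segment $(v,w)$ crosses the interior of some chamber $c$, and $c$ is contained in every apartment containing $v$ and $w$. Now take an apartment $B$ containing the whole line $\ell$ (such an apartment exists); then $c\subset B$, and the retraction onto $A$ based at $c$ restricts to an isomorphism $B\to A$ fixing $A\cap B$, hence $[v,w]$, pointwise. It therefore carries $\ell$ to the straight line of $A$ through $v$ and $w$, which is strongly regular by your Euclidean argument; since an isomorphism of apartments preserves the property that the endpoints lie in the interiors of two opposite chambers at infinity, $\ell$ itself is strongly regular. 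In short, your computation correctly settles the case $\ell\subseteq A$, but without the transfer via an apartment containing $\ell$ and the retraction (or an equivalent device), the proof of the forward implication is incomplete.
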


\begin{proof}
Let $v, v' \in \ell$ be special vertices respectively contained in the interiors of two opposite sectors $s, s'$ of some apartment $A$. Then $A$ contains the geodesic segment $[v, v']$, which cannot be parallel to any wall of $A$. In particular, there exists at least one chamber $c$ which is contained in the intersection of all apartments containing  both  $v$ and $v'$; in particular $c $ is contained in $A$. Let now $B$ be an apartment containing $\ell$. Then the retraction $\rho_{c, A}$ onto $A$ based at $c$ induces an isomorphism of $B$ onto $A$, which fixes the intersection $A \cap B$ pointwise (see Definition~4.38 and Proposition~4.39 in \cite{AB}). Therefore $\rho_{c, A}$ maps $\ell$ to a geodesic line $\ell_A$ of $A$ containing $[v, v']$. By hypothesis, the line $\ell_A$ is strongly regular. Since the restriction of $\rho_{c, A}$ to $B$ induces an isomorphism $B \to A$, we infer that $\ell$ is also strongly regular, as desired.

The converse statement is straightforward. 
\end{proof}

\subsection{Existence of strongly regular elements}

The first step is the special case of thin buildings.

\begin{lemma}
\label{existance_reg_element}
Let $(W,S)$ be a  Euclidean Coxeter system and $A$ be the associated Coxeter complex. Then $W$ contains  strongly regular hyperbolic elements.
\end{lemma}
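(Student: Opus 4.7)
The plan is to construct a strongly regular hyperbolic element in $W$ as a sufficiently generic translation lying in the translation subgroup of $W$, and then invoke Lemma~\ref{lem:CharSRH} to conclude.

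Let $V$ denote the Euclidean vector space modeling the affine space supporting $A$. The structure theory of Euclidean Coxeter groups provides a short exact sequence $1 \to T \to W \to W_0 \to 1$, where $T \leq W$ is the subgroup of translations of $A$ and $W_0$ is the finite linear Coxeter group acting on $V$. Crucially, $T$ is a lattice of full rank in $V$: for an irreducible Euclidean Coxeter system this is the classical identification of $T$ with (a rescaling of) the coroot lattice, and the reducible case is handled by exhibiting a regular translation in each irreducible factor and combining them. The subgroup $T$ acts on $A$ by type-preserving automorphisms.

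Next I would pick a Weyl chamber $C \subset V$ of $W_0$, which is an open convex cone with apex at the origin. Its interior $C^\circ$ is a nonempty open cone, hence invariant under positive homotheties. Since $T$ has full rank in $V$, a fixed vector in $C^\circ$ can be approximated by rational combinations of a $\ZZ$-basis of $T$; clearing denominators and then rescaling by a positive scalar produces an element $t \in T$ whose translation vector $v$ lies in $C^\circ$.

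Finally, I would verify that $t$ is strongly regular hyperbolic. Being a nontrivial translation of $A$, the automorphism $t$ is a type-preserving hyperbolic isometry with $\Min(t) = A$, and its translation axes are the lines of direction $v$. Their endpoints at infinity are $\pm v/\|v\| \in \partial V \cong \partial A$; since $v \in C^\circ$, these two antipodal points lie respectively in the interiors of the opposite chambers of $\partial A$ determined by $C$ and $-C$. Lemma~\ref{lem:CharSRH} then yields that $t$ is strongly regular hyperbolic. The only nontrivial ingredient is the full-rank assertion for $T$, which is the standard structure theorem for (irreducible) Euclidean Coxeter groups.
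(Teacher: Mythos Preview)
Your proof is correct and follows essentially the same approach as the paper: both arguments produce a nontrivial translation in $W$ whose direction lies in the interior of a Weyl chamber, and then invoke Lemma~\ref{lem:CharSRH}. The only cosmetic difference is that the paper obtains this translation by picking two special vertices in the interiors of opposite sectors and using that the translation subgroup acts simply transitively on special vertices of a given type, whereas you extract it from the full-rank lattice property via a density-plus-cone argument; these are two phrasings of the same underlying fact.
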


\begin{proof}
Recall that the translation subgroup of $W$ acts simply transitively on the set of special vertices of a given type of the Coxeter complex $A$. Let $v$ and $v'$ be two special vertices   respectively contained in the interiors of two opposite sectors of $A$. Then the unique geodesic line $\ell$ through $v$ and $v'$ is strongly regular and the unique translation $t$ mapping $v$ to $v'$ is an element of $W$ having $\ell$ as a translation axis. Thus $t$ is strongly regular hyperbolic by Lemma~\ref{lem:CharSRH}. 
\end{proof}

The following basic construction, valid in general \cat spaces, is a key step in proving the existence of strongly regular hyperbolic automorphisms in groups acting on a locally finite Euclidean building $\Delta$. 
In what follows, given a proper \cat space $X$, we denote its visual boundary by $\partial X$ and endow the space  $X \cup \partial X$ with the \textbf{cone topology}, which is compact (see~\cite[Chap.II.8]{BH99} for the basic definitions). The following lemma is technical, but its proof relies on a standard compactness argument.

\begin{lemma}
\label{lem:basic-const}
Let $X$ be a proper \cat space, $G < \mathrm{Is}(X)$ be any group of isometries and $\rho \colon \RR \to X$ be a geodesic map. Assume there is an increasing sequence $\{t_n\}_{n \geq 0}$ of positive real numbers tending to infinity such that $\sup_n d(\rho(t_n), \rho(t_{n+1})) < \infty$ and the set $\{\rho(t_n)\}_{n \geq 0}$ falls into finitely many $G$-orbits, each of which is moreover discrete. 

Then there exist a sequence $\{g_{n}\}_{n \geq 0} $ in $ G$ and  an increasing sequence $\{f(n)\}_{n\geq 0}$ of positive integers such that  for all $m \geq 0$ and $r \in [-m, m]$ the sequence $\{g_n\circ \rho(t_{f(n)}+~r)\}_{n \geq m}$ is constant.
Furthermore, the limit map 
$$\rho' \colon \RR \to \Delta :  r \mapsto  \lim\limits_{n \to \infty} g_n \circ \rho(t_{f(n)} +r)$$
is geodesic. In particular, for $n \geq m$, the intersection $g_n(\rho(\RR)) \cap \rho'(\RR)$  is a geodesic segment of length~$\geq 2m$.

If in addition $X$ is a locally finite Euclidean building and $\rho(\RR)$ is strongly regular, then  $\rho'(\RR)$ is  strongly regular as well.
\end{lemma}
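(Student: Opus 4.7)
The plan is a translate-to-basepoint extraction in which the discreteness of the finitely many orbits hit by $\{\rho(t_n)\}$ is used to upgrade Arzel\`a--Ascoli convergence to \emph{eventual constancy} on every compact interval. Parametrise $\rho$ by arc length and set $D := \sup_n(t_{n+1} - t_n) < \infty$. After passing to a subsequence by pigeonhole, I may assume all $\rho(t_n)$ lie in a single discrete orbit $\mathcal{O} := G \cdot p_0$; choose $h_n \in G$ with $h_n \rho(t_n) = p_0$, and set $\sigma_n(r) := h_n \rho(t_n + r)$, a unit-speed geodesic with $\sigma_n(0) = p_0$.

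The crucial observation is that the ``marked points'' $\sigma_n(t_{n+k} - t_n) = h_n \rho(t_{n+k})$ along $\sigma_n$ all lie in $\mathcal{O}$, with consecutive ones at spacing $\leq D$. Fix $m \geq 0$: the marked points lying in the parameter range $[-m - D, m + D]$ belong to $\mathcal{O} \cap \bar B(p_0, m + 2D)$, which is finite by discreteness of $\mathcal{O}$ and properness of $X$. Moreover if $\sigma_n$ meets a marked point $q$ at parameter $s$, then $|s| = d(p_0, q)$ and the sign of $s$ is forced by the sign of the index shift $k$, so the \emph{(parameter, point) pair} of each marked point in that range ranges over a finite set independent of $n$. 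This is the main delicate observation of the whole argument; once recognised, everything else is routine \cat geometry.

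A standard diagonal extraction now produces an increasing $f \colon \NN \to \NN$ such that, setting $g_n := h_{f(n)}$ and $\tau_n(r) := g_n \rho(t_{f(n)} + r)$, for every $m$ and all $n \geq m$ the marked points of $\tau_n$ inside $[-m - D, m + D]$ are independent of $n$. Picking stable marked points $q_\pm$ at parameters $s_- \leq -m \leq m \leq s_+$, uniqueness of \cat geodesics forces $\tau_n|_{[s_-, s_+]}$ to be the unique geodesic from $q_-$ to $q_+$ in $X$, hence independent of $n$ for $n \geq m$. Therefore $\rho'(r) := \lim_n \tau_n(r)$ is well defined and equals $\tau_n|_{[-m, m]}$ for $n \geq m$; in particular $\rho'$ is a unit-speed geodesic, and the intersection $g_n(\rho(\RR)) \cap \rho'(\RR) \supseteq \rho'([-m, m])$ contains a segment of length $\geq 2m$.

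For the final claim, assume $X$ is a locally finite Euclidean building and $\rho(\RR)$ is strongly regular, so each $\tau_n(\RR) = g_n \rho(\RR)$ is strongly regular too. In the intended applications the points $\rho(t_n)$ are special vertices, hence so are all marked points. Choose $m$ large enough that $\tau_n([-m, m])$ contains at least three marked special vertices. By the converse direction of Lemma~\ref{lem:LocalCrit:SRL} applied to the strongly regular line $\tau_n(\RR)$, two of them lie in the interiors of two opposite sectors of the unique apartment containing $\tau_n(\RR)$. By the stabilisation above these two vertices also lie on $\rho'(\RR)$, so the direct part of Lemma~\ref{lem:LocalCrit:SRL} yields that $\rho'(\RR)$ is strongly regular.
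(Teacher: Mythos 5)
Your overall strategy is the same as the paper's: translate the visited points to a fixed basepoint, use properness together with discreteness to see that only finitely many configurations of marked points can occur in any fixed parameter window, stabilize them by a diagonal extraction, recover the segments (hence the limit geodesic) from the stabilized marked points via uniqueness of geodesics in a $\CAT(0)$ space, and settle the strongly regular case by Lemma~\ref{lem:LocalCrit:SRL}. However, the step you yourself single out as ``the main delicate observation'' contains a genuine error: the pigeonhole reduction to a single orbit $\mathcal{O}$ destroys the bounded-gap hypothesis. If the relabelled sequence $\{t_n\}$ is the subsequence of times whose images lie in $\mathcal{O}$, then $\sup_n (t_{n+1}-t_n)$ need no longer be bounded by the constant $D$ computed for the original sequence: with two orbits visited in alternating runs of ever-increasing length, neither orbit is visited with bounded gaps. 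Hence your claim that consecutive marked points on $\sigma_n$ have spacing $\le D$ is false in general, and with it the assertion that stabilized marked points $q_\pm$ with parameters $s_-\le -m\le m\le s_+$ can be found inside the finite set $\mathcal{O}\cap \bar B(p_0,m+2D)$; without such straddling points the eventual constancy of $\tau_n|_{[-m,m]}$ does not follow. (If instead the marked points are meant to be all the original $\rho(t_j)$, they do have spacing $\le D$, but then they do not all lie in $\mathcal{O}$, so the finiteness count for a single orbit does not apply either; one of the two claims fails under each reading.)

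The repair is exactly what the paper does: use the pigeonhole only to arrange that the base points $\rho(t_{f(n)})$ lie in one orbit, so that suitable $g_n$ send them all to one point $v_0$, but take as marked points all the $\rho(t_j)$, which lie in the union $V$ of the finitely many discrete orbits; $V\cap \bar B(v_0,R)$ is still finite for every $R$, consecutive marked points are still within the original bound, and your finiteness-plus-diagonal argument then goes through verbatim. A secondary remark: your proof of the final assertion assumes that the $\rho(t_n)$ are special vertices (``in the intended applications''), which is not a hypothesis of the lemma as stated; this matches the paper's own level of detail, since it too merely cites Lemma~\ref{lem:LocalCrit:SRL}, but you should either make that assumption explicit or argue directly that a line sharing arbitrarily long segments with strongly regular lines is itself strongly regular.
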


\begin{proof}
Set $x_n = \rho(t_n )$ and $C=\sup_n d(x_n, x_{n+1})$.  Let also $V = \bigcup_n G(x_n)$ denote the union of the $G$-orbits of the points $x_n$. 
Since the sequence $\{x_n\}_{n \geq }$ falls into finitely many $G$-orbits,   we may find a subsequence $\{x_{f(n)}\}_{n \geq 0}$  and a sequence $\{g_{n}\}_{n \geq 0} \subset G$ such that the sequence $\{ g_{n}(x_{f(n)})\}_{n \geq 0} $ is constant. We set $v_0 = g_{n}(x_{f(n)}) \in V$.

Since $X$ is proper and $V$ is discrete by hypothesis,  any ball around $v_0$ contains finitely many points of $V$. On the other hand, for all $n \geq m \geq 0$, the geodesic segment $g_n \circ \rho([t_{f(n)} - m, t_{f(n)} +m])$, which is contained in the ball of radius $2m$ around $v_0$, contains at least $\lfloor \frac{2m} C \rfloor$  points of $V$. Therefore, we may adjust the sequence $\{f(n)\}_n$ to ensure that for all $ m \geq 0$ the set of geodesic segments $\{g_n \circ \rho ([t_{f(n)}-m, t_{f(n)}+m]) \; :\;n \geq m\}$ is exactly one segment. Thus, by construction, the map $\rho' \colon r \mapsto \lim\limits_{n\to \infty}   g_n \circ \rho (t_n +r)$ is well defined and therefore $\rho'(\RR)$ is the limit, when $n \to \infty$, of the sequence of geodesic lines $g_n (\rho(\RR))$, which are converging uniformly on compact sets. Thus $\rho'(\RR)$ is a also a geodesic line. That $\rho'(\RR)$ is strongly regular in the case when $X$ is a locally finite Euclidean building and $\rho(\RR)$ is strongly regular follows from Lemma~\ref{lem:LocalCrit:SRL}.
\end{proof}

We will also need the following subsidiary fact.

\begin{lemma}
\label{reg_hyp_element}
Let $X$ be a $\CAT(0)$ space and $\rho \colon \RR \to X$ be a geodesic line. Let moreover $h$ be an isometry. Suppose there exist $t <  t' \in \RR$ and $c>0$ such that 
$$h \circ \rho(t + \vareps) = \rho(t' + \vareps)$$
for all $\vareps  \in [-c, c]$. Then $h$ is a hyperbolic element admitting a translation axis containing the segment $[\rho(t),\rho(t')]$.
\end{lemma}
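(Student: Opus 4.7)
Set $p := \rho(t)$, $q := \rho(t') = h(p)$, and $\tau := t' - t > 0$. My plan is to exhibit the desired axis as the concatenation
\[L := \bigcup_{n \in \ZZ} [h^n(p), h^{n+1}(p)],\]
on which $h$ will evidently act by translation of length $\tau$; the only genuine issue is to check that $L$ is a bona fide geodesic line. Once this is done, $h$ is hyperbolic with axis $L$, and $L$ manifestly contains the segment $[h^0(p), h(p)] = [\rho(t), \rho(t')]$.

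By the $h$-equivariance of the construction, verifying the local geodesic condition at every junction $h^{n+1}(p)$ of $L$ reduces, upon conjugation by $h^n$, to the single Alexandrov-angle computation that $[q,p]$ and $[q, h(q)]$ meet at $q$ with angle $\pi$. The key input is the identification of the initial direction of $[q, h(q)]$ at $q$ via the differential of $h$ at $p$. Indeed, $[q, h(q)] = h([p,q])$ by isometry-invariance of geodesics, and uniqueness of geodesics in \cat identifies $[p, q]$ with $\rho([t, t'])$, whose initial direction at $p$ is the forward tangent of $\rho$. The hypothesis that $h$ restricts to the translation $\rho(s) \mapsto \rho(s + \tau)$ on the segment $\rho([t-c, t+c])$ forces the induced map $dh_p \colon \Sigma_p X \to \Sigma_q X$ on spaces of directions to send the forward $\rho$-tangent at $p$ to the forward $\rho$-tangent at $q$. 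Consequently $[q, h(q)]$ starts at $q$ in the forward $\rho$-direction, while $[q, p]$ starts at $q$ in the backward $\rho$-direction; being opposite directions of the single line $\rho$ through $q$, they are antipodal in $\Sigma_q X$ and the Alexandrov angle is $\pi$.

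Appealing to the standard fact that in a \cat space two segments meeting with Alexandrov angle $\pi$ concatenate to a local, and hence global, geodesic, I conclude that $[p,q] \cup [q, h(q)]$ is a geodesic of length $2\tau$; the same conclusion at each other junction of $L$ follows by conjugation. Therefore $L$ is a geodesic line, and $h$ acts on it by translation of length $\tau$ as desired.

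The principal obstacle is that the hypothesis controls $h$ only locally on $\rho$, so one cannot hope that $\rho$ itself is an axis (tree examples illustrate this clearly); the axis $L$ will in general differ from $\rho$ outside the segment $[p,q]$. The angle computation via $dh_p$ is the mechanism that promotes the local hypothesis on $h$ to a global translation axis.
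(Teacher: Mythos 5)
Your proof is correct and follows essentially the same route as the paper's: the heart in both cases is the observation that the local hypothesis forces the Alexandrov angle at $h(\rho(t))$ between $[h(\rho(t)),\rho(t)]$ and $h([\rho(t),\rho(t')])$ to equal $\pi$, after which the $h$-invariant line through the orbit of $\rho(t)$ is assembled segment by segment. The only difference is packaging: the paper records this as an inductive collinearity statement for the points $h^n(\rho(t))$, while you concatenate the segments and invoke the local-to-global principle for geodesics in \cat spaces, which amounts to the same argument.
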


\begin{proof}
Set $x = \rho(t)$ and $y = \rho(t')$, so $h(x) = y$. By hypothesis,   the Alexandrov angle $\angle_{y}(x, h(y))=\angle_{y}(x, h^{2}(x))$ is equal to $\pi$. It follows that the three points $x, y $ and $h(y)=h^{2}(x)$ are collinear and so the geodesic segment $[x,h^{2}(x)]$ contains the point $y$. The same argument used inductively shows that  all points of the sequence $\{h^{n}(x)\}_{n \in \ZZ}$ are contained in a common geodesic line, which we denote by $\ell$. Therefore, by this construction the geodesic line $\ell$ is preserved by $h$. Moreover, $x,y \in \ell$ and by hypothesis $h(x) = y \neq x$. Thus, $h$ is a hyperbolic isometry having $\ell$ as a translation axis.
\end{proof}

The following result is a technical relative of Theorem~\ref{thm:ExistenceStronglyReg}, valid for more general \cat spaces.  We state it separately for the sake of future references. Notice that groups acting isometrically on \cat spaces with discrete orbits are plentiful: they include (not necessarily discrete) groups whose action preserves a locally finite simplicial (or cellular) decomposition of the space, or totally disconnected groups acting minimally on a space with the geodesic extension property (as a consequence of \cite[Th.~6.1]{CM09}).

\begin{proposition}
\label{prop:TechSRH}
Let $X$ be a proper \cat space, $G < \mathrm{Is}(X)$ be any group of isometries and $\rho \colon \RR \to X$ be a geodesic map. Assume that there is an increasing sequence $\{t_n\}_{n \geq 0}$ of positive real numbers tending to infinity such that $\sup_n d(\rho(t_n), \rho(t_{n+1})) < \infty$ and that the set $\{\rho(t_n)\}_{n \geq 0}$ falls into finitely many $G$-orbits, each of which is moreover discrete.

Then there is an increasing sequence $\{f(n)\}_{n}$ of positive integers such that, for all $n> m > 0$, there is a hyperbolic isometry $h_{m, n} \in G$ which has a translation axis containing the geodesic segment $[\rho(t_{f(m)}), \rho(t_{f(n)})]$.

If in addition $X$ is a locally finite Euclidean building and the geodesic line $\rho(\RR)$ is strongly regular, then $h_{m, n}$ is a strongly regular hyperbolic automorphism.
\end{proposition}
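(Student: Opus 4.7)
The plan is to combine the two preceding lemmas in a direct way: Lemma~\ref{lem:basic-const} supplies the sequences $\{g_n\}\subset G$ and $\{f(n)\}$ that synchronize translates of $\rho$, and Lemma~\ref{reg_hyp_element} converts the resulting ``local translation'' on $\rho$ into a genuine hyperbolic element.

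First I would apply Lemma~\ref{lem:basic-const} to the data $(X, G, \rho, \{t_n\})$, which under the hypotheses of the proposition gives sequences $\{g_n\}\subset G$ and an increasing sequence $\{f(n)\}$ of positive integers such that for every $m \geq 0$ and every $r \in [-m, m]$ the point $g_n\circ\rho(t_{f(n)}+r)$ is the same for all $n \geq m$. For $n > m$, I then set
\[
h_{m,n} \; := \; g_n^{-1} g_m \; \in \; G.
\]
The constancy property gives at once, for $r \in [-m,m]$,
\[
h_{m,n}\bigl(\rho(t_{f(m)}+r)\bigr) \;=\; g_n^{-1}\bigl(g_m\rho(t_{f(m)}+r)\bigr) \;=\; g_n^{-1}\bigl(g_n\rho(t_{f(n)}+r)\bigr) \;=\; \rho(t_{f(n)}+r).
\]
Since $\{t_n\}$ is increasing and $f$ is increasing, $t_{f(m)} < t_{f(n)}$, so Lemma~\ref{reg_hyp_element} applies with $t = t_{f(m)}$, $t' = t_{f(n)}$ and $c = m$. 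It yields that $h_{m,n}$ is a hyperbolic isometry of $X$ admitting a translation axis that contains the geodesic segment $[\rho(t_{f(m)}), \rho(t_{f(n)})]$, which is the first assertion.

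For the additional statement, assume now that $X$ is a locally finite Euclidean building and that $\rho(\RR)$ is strongly regular. The translation axis $\ell$ of $h_{m,n}$ contains the segment $[\rho(t_{f(m)}), \rho(t_{f(n)})]$ lying on $\rho(\RR)$. By Lemma~\ref{lem:LocalCrit:SRL}, to conclude that $\ell$ is strongly regular it suffices to exhibit two special vertices on $\ell$ that lie in the interiors of two opposite sectors of a common apartment; and since $\ell$ contains the above segment of $\rho$, it suffices to find such a pair already on that segment. Because $\rho(\RR)$ is strongly regular, the converse part of Lemma~\ref{lem:LocalCrit:SRL} says that as soon as $\rho$ contains three special vertices, two of them must lie in opposite sectors of a common apartment. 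The strategy is therefore to further thin out the subsequence $\{f(n)\}$ so that the segment $[\rho(t_{f(m)}), \rho(t_{f(n)})]$ is long enough to capture such a triple of special vertices; under the natural hypothesis that the $\rho(t_n)$ themselves are special vertices (which is the situation in which the proposition will be applied, in particular in the proof of Theorem~\ref{thm:ExistenceStronglyReg}), this is automatic once $n - m$ is large enough.

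I expect Step~4 (the strongly regular conclusion) to be the only subtle point: it is tempting but incorrect to assume that $\ell$ coincides with $\rho(\RR)$ just because the two share a segment of positive length, since geodesics in a CAT(0) Euclidean building may branch at the endpoints of the shared segment. This is precisely why the local criterion of Lemma~\ref{lem:LocalCrit:SRL} is indispensable: it reduces strong regularity of $\ell$ to a property of a bounded portion of $\ell$, a portion that we do control from the construction. The rest is bookkeeping: identifying $h_{m,n}$, verifying the hypothesis of Lemma~\ref{reg_hyp_element}, and refining the subsequence to place enough special vertices inside the controlled segment.
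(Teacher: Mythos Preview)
Your argument is essentially the paper's: apply Lemma~\ref{lem:basic-const} to obtain $\{g_n\}$ and $\{f(n)\}$, set $h_{m,n}$ equal to a product of two of the $g_\bullet$'s, and invoke Lemma~\ref{reg_hyp_element}. (You take $h_{m,n}=g_n^{-1}g_m$, the paper takes $g_m^{-1}g_n$; these are inverse to one another and either yields a hyperbolic isometry with the required axis.)

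The only divergence is in the strongly regular clause. The paper simply cites Lemma~\ref{lem:LocalCrit:SRL} and Lemma~\ref{lem:CharSRH} without your detour through special vertices. Your concern---that the shared segment $[\rho(t_{f(m)}),\rho(t_{f(n)})]$ might not carry special vertices in opposite sectors---is legitimate against the \emph{statement} of Lemma~\ref{lem:LocalCrit:SRL}, but it is dissolved by that lemma's \emph{proof}. Since $\rho(\RR)$ is strongly regular it lies in a unique apartment $A$ and is parallel to no wall of $A$; hence any subsegment of positive length meets the interior of some chamber $c$ of $A$. Any apartment $B$ containing the axis $\ell$ then contains $c$, and the retraction $\rho_{c,A}$ restricts to an isomorphism $B\to A$ fixing $A\cap B$, sending $\ell$ to the unique line in $A$ through the shared segment, namely $\rho(\RR)$. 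So $\ell$ is strongly regular. No extra hypothesis on the $\rho(t_n)$ and no further thinning of $\{f(n)\}$ is needed; the proposition holds as stated.
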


\begin{proof}
Let $\{g_n\}_{n\geq 0} \subset G$ and $\{f(n)\}_{n \geq 0} \subset \NN$ be the sequences afforded by  Lemma~\ref{lem:basic-const}. Fix $m > 0$. Then by Lemma~\ref{lem:basic-const} we know that for all $r \in [-m, m]$, the sequence $\{g_n \circ \rho(t_{f(n)} + r)\}_{n\geq m}$ is constant. In particular, for all $n>m$, the element $h_{m, n} = g_m^{-1} g_n \in G$ has the property that 
$$h_{m, n} \circ \rho(t_{f(n)} +r) = \rho(t_{f(m)}+r)$$
for all $r \in [-m, m]$. Therefore, Lemma~\ref{reg_hyp_element} ensures that $h_{m, n}$ is hyperbolic and has a translation axis passing through $\rho(t_{f(m)})$ and $ \rho(t_{f(n)})$. In the case when $X$ is a locally finite Euclidean building and the geodesic line $\rho({\RR})$ is strongly regular, this axis must be therefore strongly regular by Lemma~\ref{lem:LocalCrit:SRL}. In view of Lemma~\ref{lem:CharSRH} we conclude that $h_{m, n}$ is strongly regular hyperbolic as well. 
\end{proof}

We are now able to complete the proof of Theorem~\ref{thm:ExistenceStronglyReg} from the introduction. 

\begin{proof}[Proof of Theorem~\ref{thm:ExistenceStronglyReg}]
We first observe that the subgroup of $\Aut(\Delta)$ consisting of type-preserving automorphisms is of finite index, see \cite[Prop.~A.14]{AB}. In particular, $G$ has a finite index subgroup acting by type-preserving automorphisms, whose action remains therefore cocompact. There is thus no loss of generality in assuming that the $G$-action is type-preserving.

Let $A$ be an apartment in $\Delta$. By Lemma~\ref{existance_reg_element}, the Weyl group of $\Delta$ acting on $A$ contains strongly regular hyperbolic elements. Since any point and so also any vertex of $A$ belongs to an axis of a fixed such strongly regular element, it follows that there exists a geodesic map $\rho \colon \RR \to A$ and some $C>0$ such that $\ell = \rho(\RR)$ is strongly regular and that the sequence $\{\rho(nC)\}_{n \geq 0}$ are special vertices of the same type. The set of all special vertices is clearly discrete. Since the $G$-action is cocompact by hypothesis, it follows that $G$ has finitely many orbits of special vertices. Therefore, the desired conclusion now follows from Proposition~\ref{prop:TechSRH}.
\end{proof}

\subsection{Dynamics of strongly regular elements}
\label{subsect:dynamics-reg}

Hyperbolic isometries  of $\CAT(-1)$ spaces (and more generally of Gromov hyperbolic metric spaces) enjoy remarkable dynamical properties: they have a unique attracting point and a unique repelling point in the visual boundary and any other point of the boundary is contracted by the positive powers of the isometry towards the attracting fixed point. It turns out that this property is shared by rank one isometries of $\CAT(0)$ spaces. However, it cannot be expected that such a peculiar dynamical behavior extends to all regular hyperbolic isometries of $\CAT(0)$ spaces, since a hyperbolic isometry $g$ acts trivially on the visual boundary of $\Min(g)$, which is generally not reduced to a pair of boundary points. 

We shall however see that the dynamics of a strongly regular automorphism of a Euclidean building has a contracting property which is reminiscent from (but weaker than) the hyperbolic case mentioned above. 

In the following statement, the symbol $\rho_{A, c}$ denotes the retraction onto an apartment $A$ and based at the ideal chamber $c \in \Ch(\partial A)$, as it is introduced in Section~\ref{subsubsect:Geom-radical}. The following is a strengthening of Proposition~\ref{prop:Dynamics} from the introduction.

\begin{proposition}
\label{prop:DynamRegular}
Let $\Delta$ be a Euclidean building and $a \in \Aut(\Delta)$ be a  type-preserving strongly regular hyperbolic element with unique translation apartment $A$. Let $c_- \in \Ch(\partial A)$ be the unique chamber containing the repelling fixed point of $a$. 

Then, for any $\xi \in \partial \Delta$, the limit $\lim\limits_{n \to \infty} a^n(\xi)$ exists (in the cone topology) and coincides with $\rho_{A, c_-}(\xi) \in \bd A$. In particular, the fixed-point-set of $a$ in $\bd \Delta$ is $\bd A$. 
\end{proposition}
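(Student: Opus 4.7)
The plan is to show that for every $\xi \in \partial \Delta$ the point $\xi' := \rho_{A, c_-}(\xi)$ lies in $\partial A$, is fixed by $a$, and that the iterates $a^n(\xi)$ converge to $\xi'$ in the cone topology. The argument rests on two pillars: the retraction $\rho := \rho_{A, c_-}$ commutes with $a$, and the translated apartments $a^n(A')$ (with $A'$ suitably chosen) exhaust $A$ from the $c_-$-side as $n \to \infty$.

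First I would establish that $\rho \circ a = a \circ \rho$. By definition of the retraction, for every apartment $A''$ with $c_- \in \partial A''$, the map $\rho|_{A''} \colon A'' \to A$ is the unique type-preserving simplicial isomorphism fixing $A \cap A''$ pointwise. Since $a$ is type-preserving, stabilizes $A$, and fixes $c_-$ (by Lemma~\ref{lem:CharSRH}), the apartment $a(A'')$ also has $c_-$ in its boundary, and the composition $a \circ \rho|_{A''} \circ a^{-1} \colon a(A'') \to A$ is a type-preserving isomorphism fixing $A \cap a(A'') = a(A \cap A'')$ pointwise; uniqueness forces it to equal $\rho|_{a(A'')}$. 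Because every point of $\Delta$ lies in some apartment containing $c_-$ in its boundary, this yields $\rho \circ a = a \circ \rho$ on $\Delta$. As $a$ fixes $\partial A$ pointwise, one obtains $\rho(a^n(\xi)) = a^n(\rho(\xi)) = \xi'$ for every $n$, where $\xi' := \rho(\xi) \in \partial A$.

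Next I would choose an apartment $A'$ of $\Delta$ with $\xi, c_- \in \partial A'$; existence is granted by the fact that in the spherical building $\partial \Delta$, any chamber and any point lie in a common apartment. Since $A$ and $A'$ share the ideal chamber $c_-$, their intersection contains a sector $Q_0 \subseteq A$ based at some $q_0 \in A$ and pointing toward $c_-$. Fix a base point $o \in A$ on the translation axis of $a$. Then $a^{-n}(o)$ moves along that axis toward the repelling fixed point $\xi_-$, which is interior to $c_-$; a parallel-ray argument in the Euclidean apartment $A$ shows that $a^{-n}(o)$ eventually enters $Q_0$. For all such $n$ the geodesic ray from $a^{-n}(o)$ to $\xi$ stays in $A'$, and applying $a^n$ the geodesic ray $r_n$ from $o$ to $a^n(\xi)$ lies inside $a^n(A')$, an apartment still having $c_-$ in its boundary.

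Finally I would apply $\rho$ to $r_n$. The restriction $\rho|_{a^n(A')}$ is an isomorphism onto $A$ fixing $A \cap a^n(A') \supseteq a^n(Q_0)$ pointwise, and it sends $r_n$ to the geodesic ray $r_\infty$ in $A$ from $o$ to $\xi'$. Now $a^n(Q_0)$ is the sector based at $a^n(q_0)$ pointing toward $c_-$; its apex drifts in direction $\xi_+$ while its direction of opening is unchanged, so a direct check in the Euclidean apartment $A$ shows that for every $T > 0$ the compact segment $r_\infty([0,T])$ lies in $a^n(Q_0)$ for all sufficiently large $n$. Since $\rho$ is the identity on $a^n(Q_0)$, this forces $r_n(t) = r_\infty(t)$ for $t \in [0,T]$, which gives convergence of $r_n$ to $r_\infty$ uniformly on compact sets, i.e. $a^n(\xi) \to \xi'$ in the cone topology. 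The ``in particular'' statement is then immediate: $\partial A$ is pointwise fixed, while any $\xi$ fixed by $a$ satisfies $\xi = \lim_n a^n(\xi) = \xi' \in \partial A$. I expect the main work to lie in the commutation $\rho \circ a = a \circ \rho$ and in the sector-exhaustion claim; both are clean geometric facts, but once they are in place the remainder of the argument strings together without further difficulty.
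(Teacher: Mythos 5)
Your argument is correct, and while it shares the paper's basic geometric setup---an apartment $A'$ with $\xi, c_- \in \partial A'$, a common sector of $A$ and $A'$ pointing to $c_-$, and the fact (this is exactly where strong regularity enters) that the $a^n$-translates of that sector swallow arbitrarily long initial pieces of rays in $A$---the way you identify the limit is genuinely different from the paper's. The paper first produces a candidate limit $\eta \in \partial A$ by extending the segment $[p,\xi) \cap Q$ inside $A$, proves $a^n(\xi) \to \eta$ by a length-growth argument, and only afterwards recognizes the limit as $\rho_{A,c_-}(\xi)$ by checking that $\xi \mapsto \lim_{n} a^n(\xi)$ restricts to an isometry of $\partial B$ onto $\partial A$ fixing $c_-$ pointwise. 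You instead prove upfront the equivariance $\rho_{A,c_-} \circ a = a \circ \rho_{A,c_-}$ (via uniqueness of the isomorphism $A'' \to A$ fixing $A \cap A''$ pointwise), which pins down what the limit must be from the start, and then get convergence by pulling the basepoint back ($a^{-n}(o) \in Q_0$, hence $[o, a^n(\xi)) \subset a^n(A')$) and combining injectivity of $\rho_{A,c_-}$ on $a^n(A')$ with the exhaustion of $A$ by the sectors $a^n(Q_0)$, so that $[o, a^n(\xi))$ and $[o, \rho_{A,c_-}(\xi))$ actually coincide on longer and longer initial segments. Your route buys a cleaner identification with the retraction (no angle comparison at infinity is needed) and even a slightly stronger conclusion (eventual coincidence of the rays on every compact piece); its only cost is that you use the boundary extension of $\rho_{A,c_-}$ implicitly when writing $\rho_{A,c_-}(a^n(\xi))$ for ideal points, a standard but worth-stating point (the induced map on $\partial(a^n(A'))$ is the boundary of the apartment isomorphism $\rho_{A,c_-}|_{a^n(A')}$, and the commutation passes to these boundary maps by the same uniqueness argument), whereas the paper's final isometry argument handles this identification explicitly.
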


\begin{proof} 
Let $\xi \in \bd \Delta$ and consider an apartment $B$ whose boundary contains $\xi$ and $c_-$. Let $Q$ be a sector pointing to $c_-$ and contained in the intersection $A \cap B$. Let moreover $p \in Q$ be a base point contained in the interior of $Q$. Since $B$ is convex, the geodesic ray $[p, \xi)$ is entirely contained in $B$. If $\xi \in c_-$, then $\xi $ is fixed by $a$ and the desired conclusion follows. We assume therefore that $\xi \not \in c_-$. In particular, the ray $[p, \xi)$ is not entirely contained in $Q$ and hence there is some $q \in Q$ such that $[p, \xi) \cap Q = [p, q]$.

Since $p$ lies in the interior of $Q$, it follows that the segment $[p, q]$ is of positive length. Therefore, in the apartment $A$, the geodesic segment $[p, q]$ can be extended uniquely to a geodesic ray emanating from $p$; in other words, there is a unique boundary point $\eta \in \bd A$ such that the segment $[p, q]$ is contained in the ray $[p, \eta)$. 

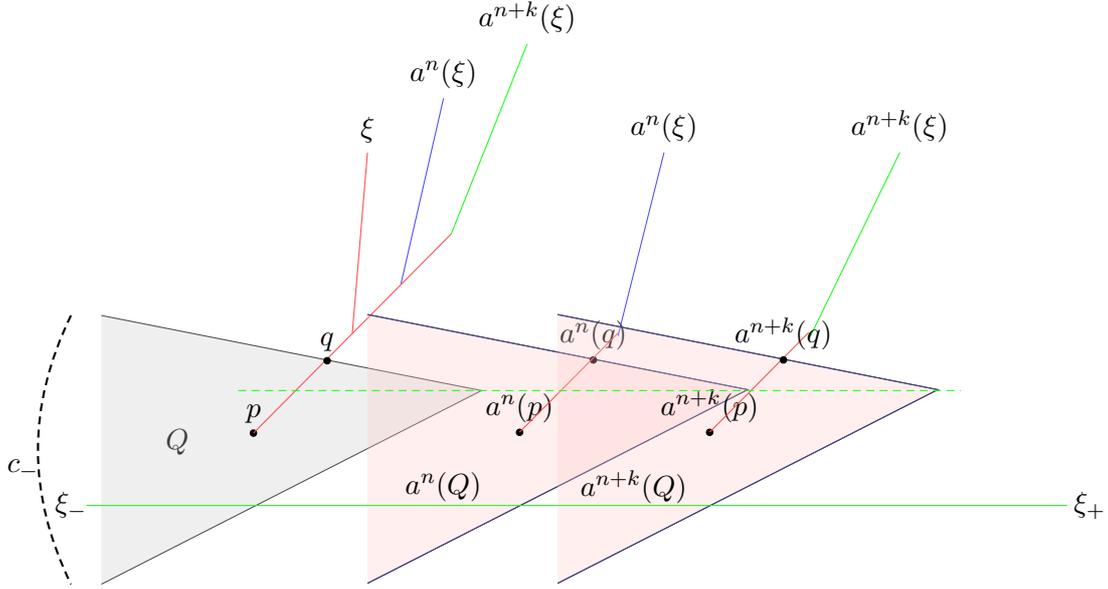
\begin{figure}[t]
\begin{center}
\begin{tikzpicture}
[yscale=1.2,xscale=1, vertex/.style={draw,fill,circle,inner sep=0.3mm},virtual/.style={thick,densely dashed},]



\node[](x) at (-4.50, 0.50) {$Q$};

\filldraw[draw=black!70!white, fill=black!20!white, fill opacity = 0.3] (0:0.75cm) 
(-5.50, 1.90)--(-0.50, 1.07)--(-5.50, -1.07) ;

\path[virtual] (-5.90, 1.90) edge [bend right] (-5.90, -1.07);
\path (-6.53,0.20) node[] {$c_{-}$};
 

\node[vertex]
(p) at (-3.50, 0.60) {};
\path (p) node[above] {$p$};

{
\draw[red!70!white, thin]
 (-3.50, 0.60)-- (-2.20,1.70);
}     

{
\draw[red!70!white, thin]
 (-2.20,1.70)--(-0.90,2.80);
}  

{
\draw[blue!70!white, thin]
 (-1.56,2.25)--(-1.00,4.30);
}  
\path (-1.00,4.30) node[above] {$a^{n}(\xi$)};

{
\draw[green!90!white, thin]
 (-0.90,2.80)--(0.10,4.90);
} 

\path (0.10,4.90) node[above] {$a^{n+k}(\xi$)};

\node[vertex]
(q) at (-2.53, 1.40) {};
\path (q) node[above] {$q$};

{
\draw[red!70!white, thin]
 (-2.20,1.70)--(-2.00, 3.70);
}     

\path (-2.00, 3.70) node[above] {$\xi$};

\path (-5.90,-0.20) node[] {$\xi_{-}$};

\path (7.50,-0.20) node[] {$\xi_{+}$};

\draw[blue,shift={(3.50 cm,0.01 cm)}] (-5.50, 1.90)--(-0.50, 1.07)--(-5.50, -1.07);
\filldraw[draw=black!70!white, fill=red!20!white, fill opacity = 0.3,shift={(3.50cm,0.01 cm)}] (0:0.75cm) 
(-5.50, 1.90)--(-0.50, 1.07)--(-5.50, -1.07) ;

\node[shift={(3.50 cm,0.01 cm)}](x) at (-4.50, -0.00) {$a^{n}(Q)$};
\node[vertex,shift={(3.50 cm,0.01 cm)}]
(p) at (-3.50, 0.60) {};
\path (p) node[above] {$a^{n}(p)$};
\node[vertex,shift={(3.50 cm,0.01 cm)}]
(q) at (-2.53, 1.40) {};
\path (q) node[above] {$a^{n}(q)$};

{
\draw[red!70!white, thin,shift={(3.50 cm,0.01 cm)}]
 (-3.50, 0.60)-- (-2.20,1.70);
}     

\node[shift={(3.50 cm,0.01 cm)}](y) at (-2.24,1.55) {};
{
\draw[blue!70!white, thin]
 (y)--(1.90, 3.70);
}    

\path[] (1.90, 3.70) node[above] {$a^{n}(\xi)$};

\draw[blue,shift={(6cm,0.01 cm)}] (-5.50, 1.90)--(-0.50, 1.07)--(-5.50, -1.07);
\filldraw[draw=black!70!white, fill=red!20!white, fill opacity = 0.3,shift={(6.00cm,0.01 cm)}] (0:0.75cm) 
(-5.50, 1.90)--(-0.50, 1.07)--(-5.50, -1.07) ;

\node[shift={(6.00 cm,0.01 cm)}](x) at (-4.50, -0.00) {$a^{n+k}(Q)$};
\node[vertex,shift={(6.00 cm,0.01 cm)}]
(p) at (-3.50, 0.60) {};
\path (p) node[above] {$a^{n+k}(p)$};

{
\draw[red!70!white, thin,shift={(6 cm,0.01 cm)}]
 (-3.50, 0.60)-- (-2.20,1.70);
}     

\node[shift={(6 cm,0.01 cm)}](z) at (-2.24,1.55) {};
{
\draw[green!90!white, thin]
 (z)--(5.00, 3.70);
}   
  
\path[] (5.00, 3.70) node[above] {$a^{n+k}(\xi)$};

\node[vertex,shift={(6 cm,0.01 cm)}]
(q) at (-2.53, 1.40) {};
\path (q) node[above] {$a^{n+k}(q)$};

{
\draw[green!90!white,thin]
 (-5.70,-0.20)--(7.20,-0.20);
}     

{
\draw[green!90!white,virtual, thin]
 (-3.70,1.07)--(5.80,1.07);
}   

\end{tikzpicture}
\caption{\footnotesize The green line $(\xi_{-},\xi_{+})$ is a translation axis of the strongly regular element $a$; $c_{-}$ is the chamber in $\partial A_{a}$ containing $\xi_{-}$; $Q$ is a sector in $A_{a}$ corresponding to $c_{-}$; the red line between $p$ and $\xi$ is the geodesic ray $[p,\xi)$.}
\end{center}
\end{figure}

We claim that $\lim\limits_{n \to \infty} a^n(\xi) = \eta$. 

Since $a$ is strongly regular, we have $a^n(Q) \supset Q$ for all $n >0$. Moreover $[a^n(p), a^n(q)] = [a^n(p), a^n
(\xi)) \cap a^n(Q)$. Since $a^n$ acts on $A$ as a Euclidean translation, it follows that $[a^n(p), a^n(q)] = a^n([p, q]) $ is parallel to $[p, q]$ (in the Euclidean sense). On the other hand, the apartment $a^n(B)$ contains both the ray $[p, a^n(\xi))$ and the ray $[a^n(p), a^n(\xi))$, which are parallel since they have the same endpoint. It follows that $[p, a^n(\xi))$ contains $[p, q]$. 
We next observe that, since $a$ is strongly regular, for any boundary wall $M$ of $Q$, the distance between $M$ and $a^n(M)$ grows linearly with $n$. This implies that the length of the geodesic segment $[p, a^n(\xi)) \cap a^n(Q)$ tends to infinity with $n$. In particular so does the length of $[p, a^n(\xi)) \cap A$. 

We have thus shown that the geodesic ray $[p, a^n(\xi))$ contains $[p, q]$ for all $n \geq 0$ and also a subsegment of $A$ whose length tends to infinity with $n$. By the definition of $\eta$, this implies that  $[p, a^n(\xi)) \cap [p, \eta) $ is a segment whose length also tends to infinity with $n$. In particular we have $\lim\limits_{n \to \infty} a^n(\xi) = \eta$ and the claim stands proven. 

\medskip
Now, let $\xi' \in \bd B$ be another boundary point of $B$. Since $B$ is a Euclidean flat, the $\CAT(1)$ distance between $\xi$ and $\xi'$ coincides with the angle at $p$ between the rays $[p, \xi)$ and $[p, \xi')$. The claim above implies that this angle also coincides with the $\CAT(1)$ distance between $\lim\limits_{n \to \infty} a^n(\xi) $ and $\lim\limits_{n \to \infty} a^n(\xi')$. Therefore, the restriction to $\bd B$ of the map
$$\bd \Delta \to \bd A : \xi \mapsto \lim\limits_{n \to \infty} a^n(\xi) $$
is an isometry of $\bd B$ onto $\bd A$ fixing $c_-$ pointwise. By definition of the retraction, this implies that $\lim\limits_{n \to \infty} a^n(\xi)  = \rho_{A, c_-}(\xi)$.
\end{proof}

Moreover, Proposition~\ref{prop:DynamRegular} shows that the boundary of the translation apartment (without the `repelling' ideal chamber $c_-$) of a strongly regular hyperbolic element behaves like an attracting locus for the orbits at infinity. The following result highlights an additional `attracting property', with respect to the interior of the building, of the translation apartment of a strongly regular hyperbolic element: geodesic segments joining a point to its image under large powers of the strongly regular element must pass through that apartment. 

\begin{proposition}
\label{lemma_2}
Let $\Delta$ be a  Euclidean building and $a \in \Aut(\Delta)$ be a type-preserving strongly regular hyperbolic element with unique translation apartment $A$. Let also $S$ be a finite set of special vertices and let  $x_{0}  \in S$.

Then for every $T>0$, there exists $N \geq1$ such that for all $n \geq N$ and $x \in S$,  the geodesic segment $[x_{0}, a^{n}(x)]$ contains a subsegment of length greater than $T$ entirely contained in $A$.
\end{proposition}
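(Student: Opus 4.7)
Proof plan. The strategy uses the \emph{expanding sector} dynamics of the strongly regular element $a$. Let $c_+\in\Ch(\bd A)$ be the chamber containing the attracting fixed point $\xi_+$ of $a$, and $c_-$ its opposite. By strong regularity the unit translation vector $\vec v$ of $a$ lies in the interior of $c_+$, so for any sector $Q\subset A$ pointing towards $c_-$ with apex $p$ we have $a^n(Q)=\{a^n(p)+u:u\in\overline{c_-}\}$ with $a^n(p)=p+n|a|\vec v$. Hence $a^n(Q)\supsetneq Q$, and every bounded subset of $A$ is contained in $a^n(Q)$ for $n$ large enough.

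Suppose first that $x_0\in A$. For each $x\in S$ pick an apartment $B_x\ni x$ with $c_-\in\bd B_x$, and let $Q_x\subset A\cap B_x$ be a sector pointing to $c_-$ with apex a special vertex $p_x$. By the previous paragraph, for $n$ large (uniformly in $x\in S$, as $S$ is finite) one has $x_0\in a^n(Q_x)\subset A\cap a^n(B_x)$, so both $x_0$ and $a^n(x)$ lie in the Euclidean apartment $a^n(B_x)$, as does the geodesic segment $[x_0,a^n(x)]$. In coordinates on $a^n(B_x)$ centered at $a^n(p_x)$, the subset $a^n(Q_x)$ identifies with the closed Weyl cone $\overline{c_-}$; the point $a^n(x)$ occupies the fixed bounded position $x-p_x\notin\overline{c_-}$, whereas $x_0$ sits at $(x_0-p_x)-n|a|\vec v$, deep inside $\overline{c_-}$. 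A direct Euclidean computation then shows that the straight segment from $x_0$ to $a^n(x)$ exits $\overline{c_-}$ at a point $f_n$ whose distance to the apex $a^n(p_x)$ is bounded uniformly in $n$: the displacement $-n|a|\vec v\in\mathrm{int}(\overline{c_-})$ dominates the bounded perturbation $x-p_x$, so the exit cannot occur until the segment has come close to the apex. Consequently $d(x_0,f_n)=d(x_0,a^n(x))-O(1)\to\infty$, and $[x_0,f_n]\subset a^n(Q_x)\subset A$ provides the required sub-segment.

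For $x_0\notin A$, let $y_0=\pi_A(x_0)\in A$ be the nearest-point projection. Applying the above at $y_0$ (to the augmented finite set $S\cup\{x_0\}$) yields, for each $x\in S$, a sub-segment $[y_0,f_n^x]\subset A$ of $[y_0,a^n(x)]$ with $d(y_0,f_n^x)\to\infty$ and $d(f_n^x,a^n(x))$ bounded. Since the angle at $y_0$ between $[y_0,x_0]$ and $[y_0,a^n(x)]$ is at least $\pi/2$ in the limit (the former is orthogonal to $A$ by the projection property while the latter approaches a direction in $A$ as $a^n(x)\to\xi_+$), a \cat fellow-traveling argument in the triangle $(x_0,y_0,a^n(x))$ shows that $[x_0,a^n(x)]$ enters $a^n(B_x)$ for $n$ large, and the same Euclidean exit analysis inside $a^n(B_x)$ then yields a sub-segment of $A$ of length $\to\infty$. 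The principal obstacle is the Euclidean exit computation in $a^n(B_x)$: uniform boundedness of $d(f_n,a^n(p_x))$ exploits precisely that $\vec v\in\mathrm{int}(c_+)$ (strong regularity), without which the translation direction could be parallel to some wall and the dominance argument would break down.
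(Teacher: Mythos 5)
Your first case ($x_0\in A$) is correct and is essentially the paper's argument for the cases where one of the two points lies in $A$, in a slightly different packaging: you use convexity of the apartment $a^n(B_x)$, the exhaustion $A=\bigcup_{n}a^n(Q_x)$ coming from $-\vec v\in\mathrm{int}(C_-)$, and an explicit Euclidean estimate showing the exit point of $[x_0,a^n(x)]$ from the cone $a^n(Q_x)$ stays at bounded distance from $a^n(x)$; that computation is sound and gives the long subsegment inside $a^n(Q_x)\subset A$.

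The gap is in the case $x_0\notin A$, which is exactly the hard case of the proposition. Your projection/fellow-traveling argument only yields \emph{proximity}, never \emph{membership}: the angle bound at $y_0=\pi_A(x_0)$ together with CAT(0) comparison shows that $[x_0,a^n(x)]$ passes within distance $d(x_0,y_0)$ of $y_0$ and stays within distance $\max\{d(x_0,A),d(x,A)\}$ of $A$ (by convexity of $d(\cdot,A)$ along the segment, since $d(a^n(x),A)=d(x,A)$). But the assertion ``$[x_0,a^n(x)]$ enters $a^n(B_x)$ for $n$ large'' does not follow from this, and it is precisely what your Euclidean exit analysis needs as input: to run that analysis you need a point of $[x_0,a^n(x)]$ lying in $a^n(Q_x)$ (deep in the cone, far from $a^n(x)$), and there is no reason for $x_0$, or any initial portion of the segment, to lie in the apartment $a^n(B_x)$. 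There is no general CAT(0) principle upgrading ``within bounded distance of a closed convex set for a long time'' to ``meets it'': in a product of two thick trees, a geodesic whose second coordinate is a constant point off $L_2$ and whose first coordinate runs along $L_1$ stays at constant positive distance from the apartment $L_1\times L_2$ for arbitrarily long without touching it. So this step is a genuine missing idea, not a routine verification. The paper closes this case with building-theoretic, not comparison-geometric, input: it chooses an apartment $A_{x_0}$ containing $x_0$ with $c_+\in\Ch(\partial A_{x_0})$ and an apartment $A_x$ containing $x$ with $c_-\in\Ch(\partial A_x)$, takes the common sectors $Q_{x_0}\subset A\cap A_{x_0}$ and $Q_x\subset A\cap A_x$, and shows that for $n$ large the segment $[x_0,a^n(x)]$ is forced through the convex set $Q_{x_0}\cap a^n(Q_x)\subset A$, whose size grows with $n$. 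Some construction of this kind, producing a region of $A$ that provably contains a long piece of $[x_0,a^n(x)]$, is what your second case is missing. (A secondary point: the claim that the angle at $y_0$ is at least $\pi/2$ ``in the limit'' also needs the lower semicontinuity of Alexandrov angles applied to $a^n(x)\to\xi_+$, which you should state; but even granted, it does not repair the main gap.)
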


\begin{proof}
Let $x \in S$. We will find a constant $N_x$ such that for all $n \geq N_x$, the geodesic segment $[x_{0}, a^{n}(x)]$ contains a subsegment of length greater than $T$ entirely contained in $A$. Once this has been done, the constant $N = \max\limits_{x \in S} N_x$ satisfies the desired conclusions. 

Let $\xi_{-},\xi_{+}$ denote the two endpoints of the regular translation axis of $a$. By Lemma~\ref{lem:CharSRH}, these endpoints $\xi_{-}$ and $\xi_{+}$ lie respectively in the interior of some chambers at infinity, say   $c_{-},c_{+}$. In order to find the constant $N_x$, we distinguish several cases. 

Assume first that $x \in A$. Let then $A_{x_{0}}$ be an apartment in $\Delta$ with the property that $x_{0} \in A_{x_{0}}$ and $c_{+}$ is an ideal chamber in $\Ch(\partial A_{x_{0}})$.  As $c_{+}$ is a common ideal chamber of $\Ch(\partial A_{x_{0}})$ and $\Ch(\partial A)$, the apartments $A$ and $A_{x_{0}}$ share a common sector corresponding to $c_{+}$. Denote it by $Q_{x_{0}}$. Remark that $Q_{x_{0}}$ is a subsector of the sector, in $A_{x_{0}}$, with base point $x_{0}$ and ideal chamber $c_{+}$.  Now because $x \in A$ and  $\lim\limits_{n \to \infty} a^{n}(y) = \xi_{+}$, there exists $N_{x}>1$ such that for every $n>N_{x}$ we have $a^{n}(x) \in Q_{x_{0}}$. Therefore the geodesic segment from $x_{0}$ to $a^{n}(x)$ passes through the apartment $A$, for every $n> N_{x}$. Moreover, as $n$ increases, so does the length of the intersection $[x_{0},a^{n}(x)] \cap A$ which is contained in $Q_{x_{0}}$.

Assume next that $x_{0} \in A$. Permuting $x_{0}$ and $x$ and replacing $a$ by $a^{-1}$, we are reduced to the case that has already been treated.

\begin{figure}[t]
\begin{center}
\begin{tikzpicture}[yscale=1.7,xscale=1.6, vertex/.style={draw,fill,circle,inner sep=0.3mm}]{Fig.1}

\node[vertex]
(x) at (-0.50, 1.07) {};
\path (x) node[above] {$x_{0}$};

\node[vertex]
(x) at (0.70,-0.07) {};
\path (x) node[above] {$z$};

{
\draw[black!70!white, thin]
 (-0.50, 1.07)-- (0.70,-0.07);
}

 \filldraw[draw=black!700!white, fill=black!20!white, fill opacity = 0.3] (0:0.75cm) 
 (0.70,-0.07)--(2.70,0.70)--(5.50,0.07)--(3.50,-0.70)--(0.70,-0.07);

\filldraw[draw=red!10!white,fill=red!10!white, fill opacity = 0.5] (0:0.75cm) 
 (5.50,0.07)-- (6.30,1.40)--(6.90,0.80);

{
\draw[black!70!white, thin]
 (5.50,0.07)-- (6.30,1.40);
}

{
\draw[black!70!white, thin]
 (5.50,0.07)-- (6.90,0.80);
}

\node[vertex]
(x) at (6.50,1.00) {};
\path (x) node[above] {$h^{n}(x)$};

\node[]
(x) at (3.30,-0.40) {$Q_{x_{0}}\cap h^{n}(Q_{x})$};

{
\draw[red!70!white, thin]
 (-0.50, 1.07)-- (1.0,0.05);
}

{
\draw[red!70!white, thin]
 (1.0,0.05)--(8.00,-0.20);
}

\node[]
(x) at (8.15,-0.20) {$\xi_{+}$};

{
\draw[green!70!white, thin]
 (3.50,0.15)--(8.00,-0.02);
}

\node[vertex]
(x) at (5.50,0.07) {};
\path (x) node[above] {$h^{n}(y)$};

{
\draw[blue!70!white, thin]
 (-0.50, 1.07)--(1.228,0.12);
}

{
\draw[blue!70!white, thin]
 (1.228,0.12)--(5.10,0.30);
}

{
\draw[blue!70!white, thin]
 (5.10,0.30)--(6.50,1.00);
}

\end{tikzpicture}
\caption{\footnotesize The red line is the geodesic ray $[x_{0},\xi_{+})$; the green line is the translation of $y$ by powers of $a$; the blue line is the geodesic segment from $x_{0}$ to $a^{n}(x)$;}
\end{center}
\end{figure}
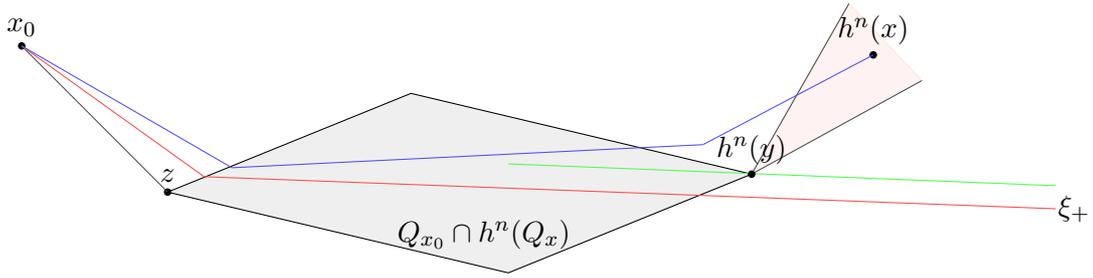

Consider now the remaining case of $x_{0}$ and $x \notin A$. Let $A_{x_{0}}$ and $A_{x}$ be two apartments in $\Delta$ such that  $x_{0} \in A_{x_{0}}$, $c_{+}$ is a chamber of $\partial A_{x_{0}}$, $x \in A_{x}$ and $c_{-}$  is a chamber of $\partial A_{x}$. Denote by $Q_{x_{0}}$ (resp., $Q_{x}$) a common sector of $A$ and $A_{x_{0}}$ (resp., of $A$ and $A_{x}$) corresponding to $c_{+}$ (resp., to $c_{-}$). As above, remark that $Q_{x_{0}}$ (resp., $Q_{x}$) is a subsector of the sector in $A_{x_{0}}$ (resp., $A_{x}$) with base point $x_{0}$ (resp., $x$) and ideal chamber $c_{+}$ (resp., $c_{-}$).
Let moreover $z$ be the base point of the sector $Q_{x_{0}}$. We can choose also $Q_{x}$ to be a sector, with base point $y$, such that $x$ is contained in the interior of the opposite sector to $Q_{x}$, with base point $y$, of the apartment $A_{x}$.

Since $\lim\limits_{n \to \infty} a^{n}(y)  = \xi_{+}$, there exists $N_{1}>1$ such that $a^{n}(y) \in Q_{x_{0}}$ and $z \in a^{n}(Q_{x})$, for every $n> N_{1}$. But every sector is convex. Therefore, so is any intersection of two sectors. Thus, for every $n> N_{1}$, the intersection $Q_{x_{0}} \cap a^{n}(Q_{x})$  is a non-empty convex subset of $A$ and so also of $a^{n}(A_{x})$ and $A_{x_{0}}$. As $n \to \infty$ the volume of $Q_{x_{0}} \cap a^{n}(Q_{x})$ grows strictly with $n$. Moreover, $Q_{x_{0}} \cap a^{n}(Q_{x})$ converges uniformly on compact sets to the sector $Q_{x_0}$.

For every $n > N_{1}$ we have $z \in a^{n}(Q_{x}) \subset  a^{n}(A_{x})$. Let $Q_{n}$ denote the sector in $a^{n}(A_{x})$ with base point $z$ and corresponding to the opposite ideal chamber of $c_{-}$ in $a^{n}(A_{x})$. This sector $Q_{n}$ contains as a subsector the sector with base point $a^{n}(y)$ and opposite to $a^{n}(Q_{x})$. Therefore $Q_{n}$ contains the vertex $a^{n}(x)$.

Let $n>N_{1}$ and consider the sector with base point $x_{0}$ and corresponding to the opposite ideal chamber of $c_{-}$ in $a^{n}(A_{x})$. This sector contains as a subsector the sector $Q_{n}$ and so also the vertex $a^{n}(x)$ and the intersection $Q_{x_{0}} \cap a^{n}(Q_{x})$. Since  $\lim\limits_{n \to \infty} a^{n}(x) = \xi_{+}$, there exists $N_{x}>N_{1}$ such that the geodesic segment from $x_{0}$ to $a^{n}(x)$ is passing through the convex set $Q_{x_{0}} \cap a^{n}(Q_{x}) \subset A$, for every $n > N_{x}$. Since $Q_{x_{0}} \cap a^{n}(Q_{x})$ tends uniformly to the sector $Q_{x_0}$, we infer that, given $T>0$, we may enlarge $N_{x}$ in such a way that for every $n \geq N_{x}$ the unique geodesic segment from $x_{0}$ to $a^{n}(x)$ contains a subsegment of length greater than $T$ entirely contained in $Q_{x_{0}} \cap a^{n}(Q_{x})$ and so in $A$. The proof is complete.
\end{proof}

\section{Euclidean buildings and strong transitivity}
\label{sect:Eucl-strong-trans}

Let $\Delta$ be a  building or a spherical building and $G$ be a group acting by automorphisms on $\Delta$. We say the $G$-action is  \textbf{strongly transitive}   if for any two pairs $(A_1,c_{1})$ and $(A_2,c_{2})$ consisting of an apartment $A_i$ and a chamber $c_i \in \Ch(A_i)$, there exists $g \in G$ such that $g(A_1)=A_2$ and $g(c_{1})=c_{2}$.

The goal of this section is to establish the equivalence between (i) and (ii) from Theorem~\ref{thm:main-thm}. We shall moreover provide several other characterizations in Proposition~\ref{transitivity2} below. Let us first notice that the implication from (i) to (ii) is well-known (see~\cite[Section~17.1]{Gar97}) and holds in full generality. In fact, in the first part of our discussion towards proving the converse implication, we leave the realm of locally compact groups and locally finite Euclidean buildings, and consider the broader framework of abstract groups acting on Euclidean buildings that are not supposed to be locally finite.

\subsection{A geometric `unipotent' radical}
\label{subsubsect:Geom-radical}

\begin{lemma}\label{lem:unip}
Let  $\Delta$ be a Euclidean building and $G \leq \Aut(\Delta)$ be any group of type-preserving automorphisms. Let $c \in \Ch(\partial \Delta)$ be a chamber at infinity. Then the set 
$$
G_c^0 :=\{g \in G_c \; | \;  g \text{ fixes some point of }\Delta\}
$$
 is a normal subgroup of the stabilizer $G_c =\{g \in G \; | \;  g(c)=c\}$. 
\end{lemma}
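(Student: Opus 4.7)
The plan is to characterize $G_c^0$ as the set of elements of $G$ that pointwise fix some sector with ideal chamber $c$, and then deduce the subgroup and normality properties from classical sector geometry in Euclidean buildings.

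First, I claim that $g \in G_c$ belongs to $G_c^0$ if and only if $g$ pointwise fixes some sector with ideal chamber $c$. The ``if'' direction is clear, since such a $g$ fixes the base vertex of the sector. For the ``only if'' direction, suppose $g \in G_c$ fixes a point $x_0 \in \Delta$. Since $g$ is type-preserving on $\Delta$, the induced automorphism of $\partial\Delta$ is also type-preserving (a standard fact about Euclidean buildings, see e.g.\ \cite{AB}). As the vertices of a chamber in the spherical building $\partial \Delta$ carry pairwise distinct types, the equality $g(c) = c$ forces $g$ to fix $c$ pointwise. For each $\xi \in \bar c$, the geodesic ray $[x_0, \xi)$ is then setwise preserved by $g$, and since an isometry of a ray fixing its starting point is trivial, $g$ fixes $[x_0, \xi)$ pointwise. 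Taking the union over $\xi \in \bar c$ shows that $g$ pointwise fixes the sector $Q(x_0, c)$.

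Closure of $G_c^0$ under products now reduces to a standard fact about sectors. Suppose $g_1, g_2 \in G_c^0$ pointwise fix sectors $Q_1, Q_2$ respectively, both with ideal chamber $c$. By the common-apartment theorem for Euclidean buildings (see \cite{AB}), there exist subsectors $Q_i' \subseteq Q_i$ contained in a common apartment $A$. Inside $A$, any two sectors with the same ideal chamber admit a further common subsector: identifying $A$ with a Euclidean space, the Weyl chamber $C$ corresponding to $c$ is a simplicial cone, and two translates $p+C$ and $q+C$ always meet in a translate of $C$. Letting $Q_3$ denote such a common subsector of $Q_1'$ and $Q_2'$, both $g_1$ and $g_2$ pointwise fix $Q_3$, and hence so does $g_1 g_2$, showing $g_1 g_2 \in G_c^0$. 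Closure under inversion is immediate from the sector characterization.

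For normality, suppose $g \in G_c^0$ pointwise fixes a sector $Q$ with ideal chamber $c$, and let $h \in G_c$. Then $hgh^{-1}$ pointwise fixes $h(Q)$, and since $h$ is an isometry of $\Delta$ mapping $c$ to $c$ setwise, $h(Q)$ is again a sector with ideal chamber $c$. Hence $hgh^{-1} \in G_c^0$. The main challenge of the argument is the bootstrap from the setwise stabilization $g(c) = c$ to pointwise fixation of $c$, which is precisely where the type-preserving hypothesis enters; once that is in hand, the subgroup and normality properties follow routinely from the common-subsector property of Euclidean buildings.
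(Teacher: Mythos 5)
Your proposal is correct and follows essentially the same route as the paper: show that any element of $G_c$ fixing a point must pointwise fix a sector pointing to $c$ (using the type-preserving hypothesis to pass from $g(c)=c$ to pointwise fixation of $c$), then use the common-subsector property of sectors with the same chamber at infinity to get closure under products, with normality being immediate. You merely supply more detail than the paper does at two points (the ray-by-ray argument for fixing the sector, and the derivation of the common-subsector fact from the common-apartment theorem), but the underlying argument is the same.
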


\begin{proof}
Let $g, h \in G_c^0$ and $x, y \in \Delta$ be points fixed by $g, h$ respectively. Since $g$ (resp., $h$) also fixes the chamber $c$ and is type-preserving, it must fix pointwise an entire sector $Q_x$ (resp., $Q_y$) emanating from $x$ (resp., $y$) and pointing to $c$. Any two sectors in a Euclidean building having the same chamber at infinity contain a common subsector. Thus $Q_x \cap Q_y$ is a non-empty subset of $\Delta$, which is clearly  fixed pointwise by $\langle g, h \rangle$. This implies that the product $gh$ belongs to $G_c^0$. Thus $G_c^0$ is a subgroup. It is clear that $G_c^0$ is invariant under conjugation by all elements of $G_c$. 
\end{proof}

There is another interpretation of the subgroup $G_c^0$ which will play an important role in our considerations. In order to describe it, we first need to recall the notion of retractions from a chamber at infinity. 

Given an apartment $A$ in a Euclidean building $\Delta$ and a chamber at infinity $c$ contained in $\partial A$, there is a map 
$$
\rho_{A, c} \colon \Delta \to A,
$$
called the \textbf{retraction} on $A$ based at $c$, characterised by the following properties: the restriction of $\rho_{A, c}$ to $A$ is the identity on $A$ and the restriction of  $\rho_{A, c}$ to any apartment $B$ whose boundary contains $c$, induces an isomorphism of $B$ onto $A$. We refer to  \cite[\S11.7 ]{AB} for more information. 

\begin{lemma}
\label{lem:BusemanRetraction}
Let  $\Delta$ be a Euclidean building and $G \leq \Aut(\Delta)$ be any group of type-preserving automorphisms. Let $c \in \Ch(\partial \Delta)$ be a chamber at infinity and $A$ be an apartment whose boundary contains $c$. Then for any $g \in G_c$, the map 
$$\beta_c(g) \colon A \to A : x \mapsto \rho_{A, c}(g(x))$$
is an automorphism of the apartment $A$, acting as a (possibly trivial) translation. Moreover the map
$$\beta_c \colon G_c \to \Aut(A)$$
is a group homomorphism whose kernel coincides with $G_c^0$.
\end{lemma}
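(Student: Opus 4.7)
My plan is to prove in sequence that $\beta_c(g)$ is a type-preserving automorphism of $A$ acting as a translation, that $\beta_c$ is a homomorphism, and that its kernel equals $G_c^0$. The key geometric input throughout is that any two sectors of $\Delta$ pointing to a common chamber at infinity share a subsector, together with the characterisation of $\rho_{A,c}$ as the map whose restriction to any apartment $B$ with $c \in \partial B$ is the unique type-preserving Coxeter isomorphism $B \to A$ fixing a common subsector of $A \cap B$ pointing to $c$ pointwise.

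Since $g(c) = c$ and $g$ is type-preserving, the image $g(A)$ is an apartment with $c$ in its boundary, so $\rho_{A, c}$ restricts on $g(A)$ to a type-preserving Coxeter isomorphism $g(A) \to A$. Hence $\beta_c(g) = \rho_{A, c}|_{g(A)} \circ g|_A$ is a type-preserving automorphism of $A$, in particular an isometry of the underlying Euclidean space. To see it is a translation I would inspect the induced action on $\partial A$: since $g$ is type-preserving it fixes the simplex $c$ pointwise, and $\rho_{A, c}$ fixes $c$ pointwise by construction, so $\beta_c(g)$ induces a type-preserving automorphism of the spherical Coxeter complex $\partial A$ fixing the chamber $c$. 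The spherical Weyl group acting simply transitively on chambers, this automorphism must be the identity; a Euclidean isometry fixing the boundary sphere pointwise is a translation.

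For the homomorphism property I would establish the equivariance identity $\rho_{A, c}(g(y)) = \beta_c(g)(\rho_{A, c}(y))$ for every $g \in G_c$ and every $y$ in any apartment $B$ with $c \in \partial B$. Both sides are type-preserving Coxeter isomorphisms $B \to A$, and the same boundary analysis as above shows that they induce the same map $\partial B \to \partial A$, namely the unique type-preserving isomorphism of spherical Coxeter complexes carrying $c$ to $c$. Two type-preserving automorphisms of a Euclidean apartment with the same boundary map differ by a translation of $A$, and evaluating both sides at any point of a common subsector of $A \cap B$ pointing to $c$, on which both compositions act trivially, shows that this translation is zero. Specialising the identity to $B = h(A)$ and $y = h(x)$ gives $\beta_c(gh) = \beta_c(g)\beta_c(h)$.

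For the kernel, if $g$ fixes a point $p$ then, being type-preserving and stabilising $c$, it fixes pointwise a sector emanating from $p$ and pointing to $c$; this sector shares a subsector with $A$, on which $\beta_c(g)$ is trivially the identity, so the translation $\beta_c(g)$ is the identity. Conversely, if $\beta_c(g) = \id$, choose a common subsector $Q$ of $A$ and $g^{-1}(A)$ pointing to $c$: for $x \in Q$ one has $g(x) \in g(Q) \subseteq A$, whence $\rho_{A, c}(g(x)) = g(x)$, and $\beta_c(g)(x) = x$ forces $g(x) = x$, so $g \in G_c^0$. The main obstacle is the equivariance identity underlying the homomorphism step: it demands coordinating choices of common subsectors for three different restrictions of the retraction, and verifying both the boundary agreement and the interior evaluation; once this identity is in place, the translation structure and the kernel description follow by routine bookkeeping.
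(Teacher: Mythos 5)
Your proof is correct, and in outline it matches the paper's: the retraction restricted to $g(A)$ is a type-preserving isomorphism onto $A$, so $\beta_c(g)$ is a type-preserving automorphism of $A$ fixing $\partial A$ chamberwise and hence a translation, and your kernel argument (a fixed point yields a pointwise-fixed sector pointing to $c$ meeting $A$ in a subsector; conversely $\beta_c(g)=\id$ forces $g$ to fix a common subsector of $A$ and $g^{-1}(A)$) is exactly the paper's, if anything stated more carefully. The one place where you proceed differently is the homomorphism property: the paper notes that a translation of $A$ is determined by its value at one special vertex and evaluates $\beta_c(gh)$ and $\beta_c(g)\beta_c(h)$ at a vertex $v$ deep in the common sector of $A$ and $h(A)$, where $\rho_{A,c}(h(v))=h(v)$; you instead prove the global equivariance identity $\rho_{A,c}\circ g=\beta_c(g)\circ\rho_{A,c}$ on every apartment $B$ with $c\in\partial B$, using rigidity of type-preserving isomorphisms of the boundary spherical Coxeter complexes to see the two maps $B\to A$ differ by a translation of $A$, killing that translation by evaluation on $A\cap B$, and then specializing to $B=h(A)$, $y=h(x)$. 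Both routes work; yours buys the slightly stronger equivariance statement at the cost of the boundary-rigidity detour, while the paper's single-vertex evaluation is shorter. One small correction: on the common subsector of $A\cap B$ the two compositions do not ``act trivially''---neither fixes those points in general---but they do agree there, since for $y\in A\cap B$ both sides equal $\rho_{A,c}(g(y))$ (using $\rho_{A,c}|_A=\id$ and the definition of $\beta_c(g)$), and agreement at a single point is precisely what is needed to conclude that the translation discrepancy vanishes.
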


\begin{proof}
Let $g \in G_c \setminus G_{c}^{0}$. Then $g(A)$ is an apartment whose boundary contains $c$. Therefore, the restriction of $\rho_{A, c}$ to $g(A)$ is an isomorphism from $g(A)$ to $A$ which fixes $c$. Hence $\beta_c(g)$ is indeed an automorphism of $A$ which fixes $c$. Since $G$ is type-preserving  $\beta_c(g)$ fixes all chambers in $\partial A$ and acts thus as a translation on $A$. 

Recall that the translation subgroup of $\Aut(A)$ acts freely on the set of special vertices, of the same type, of $A$. Therefore, the translation $\beta_c(g)$ is uniquely determined by its action on a given special vertex. In particular, in order to show that $\beta_c$ is a homomorphism, it suffices to find some special vertex $v$ of $A$ such that $\beta_c(gh)(v) = \beta_c(g)\beta_c(h)(v)$. Since $A$ and $h(A)$ have the chamber at infinity $c$ in common, they also contain a common sector associated to $c$. Now, for any vertex $v$ deep enough in that sector, we have $h(v) \in A$. Therefore $\rho_{A, c}(h(v)) = h(v)$ and hence 
$$
\begin{array}{rcl}
\beta_c(g) \beta_c(h)(v) & = & \beta_c(g)(\rho_{A, c}(h(v)))\\
& = &  \beta_c(g)(h(v))\\
& = & \rho_{A, c}(gh(v))\\
& =& \beta_c(gh)(v).
\end{array}
$$
This confirms that $\beta_c$ is indeed a homomorphism. 

The fact that $G^0_c \leq \Ker(\beta_c)$ is clear from the definition. Conversely, given any element  $g \in \Ker(\beta_c)$, the element $g$ must fix pointwise the intersection $A \cap g(A)$, and hence $g \in G^0_c$. Thus $G^0_c = \Ker(\beta_c)$. 
\end{proof}

\subsection{Trees in Euclidean buildings}

The following property of Euclidean buildings is well-known and useful. 

\begin{lemma}
\label{lem:TreeWalls}
Let $\Delta$ be a (thick, resp., locally finite) Euclidean building of dimension $n$. 
Let $\sigma, \sigma' \subset \partial \Delta$ be a pair of opposite panels at infinity. We denote by $P(\sigma, \sigma')$ the union of all apartments of $\Delta$ whose boundaries contain $\sigma$ and $\sigma'$. Then  $P(\sigma, \sigma')$ is a closed convex subset of $\Delta$, which splits canonically as a product
$$ P(\sigma, \sigma') \cong T \times \RR^{n-1},$$
where $T$ is a (thick, resp., locally finite) tree whose ends are canonically in one-to-one correspondence with the elements from the set $\Ch(\sigma)$ of all ideal chambers having $\sigma$ as a panel. Under this isomorphism, the walls of $\Delta$ contained in $ P(\sigma, \sigma')$ correspond to the subsets of the form $\{v\} \times \RR^{n-1}$ with $v$ a vertex of $T$.
\end{lemma}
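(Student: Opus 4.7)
The plan is to realize $P(\sigma, \sigma')$ as a \cat parallel set, apply the general product decomposition theorem of Bridson--Haefliger, and then verify that the non-Euclidean factor is a tree by analysing apartment intersections.

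First, fix any apartment $A_0$ with $\sigma \cup \sigma' \subseteq \partial A_0$ and let $W_0 \subseteq A_0$ be the unique wall of $A_0$ whose boundary at infinity contains both $\sigma$ and $\sigma'$; this $W_0$ is an $(n-1)$-flat isometric to $\RR^{n-1}$. I would first check that $P(\sigma, \sigma')$ coincides with the \cat parallel set $P(W_0) := \{x \in \Delta \mid x \in F \text{ for some $(n-1)$-flat } F \text{ parallel to } W_0 \}$. The inclusion $P(\sigma, \sigma') \subseteq P(W_0)$ is immediate since within any apartment $A$ with $\sigma, \sigma' \subseteq \partial A$, every point lies on a $W_0$-parallel wall-translate inside $A$; conversely, any flat $F$ parallel to $W_0$ has $\partial F = \partial W_0 \supseteq \sigma \cup \sigma'$ and extends to an apartment (using that $(n-1)$-flats lie in apartments of a Euclidean building), whose boundary must then contain $\sigma$ and $\sigma'$. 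The Product Decomposition Theorem \cite[II.2.14]{BH99} applied to $P(W_0)$ then yields that $P(\sigma, \sigma')$ is closed and convex, and splits isometrically as $T \times \RR^{n-1}$ for some \cat space $T$.

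The main step is to identify $T$ as a tree. Each apartment $A \subseteq P(\sigma, \sigma')$ contains a wall parallel to $W_0$, so under the splitting it decomposes as $L_A \times \RR^{n-1}$ for a geodesic line $L_A \subseteq T$, and $T$ is a union of such lines. The building axiom that apartment intersections are closed convex subcomplexes (see \cite{AB}), combined with the invariance of $A_1 \cap A_2$ under the $\RR^{n-1}$-translations of the splitting that fix $\sigma$ and $\sigma'$, gives $A_1 \cap A_2 = (L_{A_1} \cap L_{A_2}) \times \RR^{n-1}$. Since $T$ is \cat and two geodesic lines of a \cat space which share any subsegment of positive length must coincide globally (by uniqueness of geodesic extension), it follows that for $A_1 \neq A_2$ the intersection $L_{A_1} \cap L_{A_2}$ is at most a single point. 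This pairwise single-point intersection of lines, combined with \cat convexity and simple connectedness, forces $T$ to be a one-dimensional geodesic metric space without loops, hence a tree. Thickness (resp. local finiteness) of the branching at each vertex $v \in T$ is then inherited from that of the link of the wall $\{v\} \times \RR^{n-1}$ in $\Delta$.

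Finally, I would identify the ends of $T$ with $\Ch(\sigma)$ and verify the wall correspondence. Any chamber $c \in \Ch(\sigma)$ together with a base point on $W_0$ determines a sector $S_c \subseteq P(\sigma, \sigma')$ asymptotic to $c$; projecting $S_c$ under the splitting yields a geodesic ray in $T$ representing a unique end, and this assignment $\Ch(\sigma) \to \mathrm{Ends}(T)$ is bijective, since distinct chambers yield non-asymptotic sectors and every geodesic ray in $T$ lifts to such a sector. Under the product decomposition, the walls of $\Delta$ inside $P(\sigma, \sigma')$ that are parallel to $W_0$ correspond precisely to the $(n-1)$-flats $\{v\} \times \RR^{n-1}$ with $v$ a vertex of $T$. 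The main obstacle is the tree-structure step: confirming that the resulting \cat space $T$ is genuinely tree-like, which requires combining the general \cat rigidity for geodesic lines with the building-specific fact that apartment intersections are convex subcomplexes respecting the splitting, so that no bigon can arise from two apartments agreeing on a bounded slab in the $W_0$-direction and diverging on both sides.
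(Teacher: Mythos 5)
Your reduction of $P(\sigma,\sigma')$ to a parallel set and the appeal to the product decomposition theorem is a reasonable start (note that \cite[II.2.14]{BH99} is stated for a single geodesic line, so for the $(n-1)$-flat $W_0$ you must iterate it, and your identification $P(\sigma,\sigma')=P(W_0)$ quietly uses two nontrivial facts: that every $(n-1)$-flat lies in an apartment, and that the walls with boundary containing $\sigma\cup\sigma'$ in \emph{different} apartments are mutually parallel, i.e.\ that two opposite panels at infinity span a unique wall of $\partial\Delta$; neither is justified). The genuine gap, however, is the tree-structure step, and it is not a fixable slip of wording: you assert that two geodesic lines of a $\mathrm{CAT}(0)$ space sharing a nondegenerate segment must coincide ``by uniqueness of geodesic extension''. $\mathrm{CAT}(0)$ spaces have unique geodesics between points but emphatically \emph{not} unique geodesic extensions --- trees themselves are the standard counterexample --- and the conclusion you draw from it is false in the very geometry at hand: two distinct apartments $A_1\neq A_2$ of $P(\sigma,\sigma')$ typically intersect in a half-apartment or a slab, so $L_{A_1}\cap L_{A_2}$ is a ray or a nondegenerate segment of $T$ (already visible when $\Delta$ is a product of two trees, or for two apartments of an $\widetilde A_2$-building sharing a half-plane). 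Moreover, even if the lines did meet pairwise in at most a point, that would not force $T$ to be a tree: $\RR^2$ is a simply connected $\mathrm{CAT}(0)$ space which is the union of its lines, pairwise meeting in at most one point. Since $T$ is $\mathrm{CAT}(0)$ it has no geodesic bigons anyway; what actually has to be proved is that $T$ is one-dimensional with discrete, simplicial branching located exactly at the walls, and that its ends are parametrized by $\Ch(\sigma)$, and this requires building-theoretic input (shared sector-faces/Weyl cones, retractions, the correspondence between branches at a wall and the chambers of $\Delta$ having a panel on it) rather than $\mathrm{CAT}(0)$ generalities.

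For comparison, the paper does not reprove this statement at all: it invokes the classical construction of the tree associated with a wall (equivalently, with a pair of opposite panels at infinity) in \cite[Chapter 10, \S 2]{Ron89}, where the tree is built combinatorially --- vertices corresponding to the walls of the parallel class, edges to half-apartments/chambers having a panel on such a wall, the apartments of $P(\sigma,\sigma')$ furnishing the lines --- and the building axioms are what guarantee the tree property, the bijection between ends and $\Ch(\sigma)$, and the inheritance of thickness and local finiteness. If you want a self-contained argument along your lines, the missing idea is to replace the false ``lines meet in a point'' step by such a combinatorial description of $T$ (or, alternatively, by a dimension argument showing $\dim T=1$ together with an analysis of the links), which is precisely the content of the cited construction.
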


\begin{proof}
See  \cite[Chapter 10, \S2]{Ron89}. 
\end{proof}

\subsection{Stabilizers of pairs of opposite panels}

The following result is also well-known; we provide a proof for the reader's convenience. 

\begin{lemma}\label{lem:OppPanels}
Let $Z$ be a thick spherical building and $G \leq \Aut(Z)$ be a strongly transitive group of type-preserving automorphisms. Then, for any pair of opposite panels $\sigma, \sigma'$, the stabilizer $G_{\sigma, \sigma'}$ is $2$-transitive on the set of chambers $\Ch(\sigma)$. 
\end{lemma}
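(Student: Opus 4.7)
The plan is to reduce the $2$-transitivity of $G_{\sigma, \sigma'}$ on $\Ch(\sigma)$ to the strong transitivity of $G$, by showing that every pair of distinct chambers $c_1, c_2 \in \Ch(\sigma)$ lies in a common apartment $A$ that also contains $\sigma'$ (hence $\sigma$). Once such apartments are available, the standard equivalent formulation of strong transitivity in the spherical setting---transitivity of $G$ on ordered pairs of opposite chambers---maps any two such configurations to one another, and the type-preservation hypothesis forces the resulting element to fix both panels.

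To produce the apartment, I would use the projection bijection $\pi = \proj_{\sigma'} \colon \Ch(\sigma) \to \Ch(\sigma')$ associated to the opposite pair of residues (a consequence of $\sigma$ and $\sigma'$ being opposite). The standard characterization is that $(c, d) \in \Ch(\sigma) \times \Ch(\sigma')$ is an opposite pair of chambers of $Z$ if and only if $d \neq \pi(c)$. Setting $d := \pi(c_2)$, the bijectivity of $\pi$ and $c_1 \neq c_2$ yield $d \neq \pi(c_1)$, so $(c_1, d)$ is opposite; being a spherical building, $Z$ then contains a unique apartment $A$ through this pair, and $A$ clearly contains $\sigma, \sigma'$. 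The key sub-step is to verify that $c_2 \in A$: writing $\Ch(\sigma) \cap A = \{c_1, c^*\}$ and $\Ch(\sigma') \cap A = \{d, d^*\}$, convexity of $A$ and the gate property imply that $\pi$ restricts to a bijection between these two-element sets. Since $c_1$ and $d$ are opposite we have $\pi(c_1) \neq d$, forcing $\pi(c_1) = d^*$ and $\pi(c^*) = d$; combined with $\pi(c_2) = d$ and the injectivity of $\pi$, this gives $c^* = c_2$.

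The proof is then concluded as follows. Given a second pair $c_1', c_2' \in \Ch(\sigma)$ with $c_1' \neq c_2'$, form the analogous apartment $A'$ spanned by the opposite pair $(c_1', \pi(c_2'))$. By strong transitivity, there exists $g \in G$ with $g(c_1) = c_1'$ and $g(\pi(c_2)) = \pi(c_2')$. Since $g$ is type-preserving and $c_1, c_1' \in \Ch(\sigma)$, it maps the unique panel of $c_1$ of the type of $\sigma$ to that of $c_1'$, both of which equal $\sigma$; likewise $g(\sigma') = \sigma'$. Thus $g \in G_{\sigma, \sigma'}$, and $g(A) = A'$ because an apartment in a spherical building is determined by any opposite pair it contains. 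Finally, $\Ch(\sigma) \cap A = \{c_1, c_2\}$ maps to $\Ch(\sigma) \cap A' = \{c_1', c_2'\}$ with $g(c_1) = c_1'$, forcing $g(c_2) = c_2'$. The main (mild) obstacle is the sub-step establishing $c^* = c_2$ inside $A$; everything else is a routine consequence of the definitions and of the equivalence between strong transitivity and transitivity on ordered pairs of opposite chambers.
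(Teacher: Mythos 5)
Your proof is correct, and it reaches the conclusion by a somewhat different route than the paper. The paper's argument never constructs an apartment: it fixes $c \in \Ch(\sigma)$, sets $c' = \proj_{\sigma'}(c)$, observes that any two chambers $x, y \in \Ch(\sigma) \setminus \{c\}$ are opposite $c'$ (the same opposition criterion you use: for opposite panels, $d \in \Ch(\sigma')$ is opposite $c \in \Ch(\sigma)$ iff $d \neq \proj_{\sigma'}(c)$), and then invokes strong transitivity in the form ``$G_{c'}$ is transitive on the chambers opposite $c'$'' to get $g$ with $g(x) = y$; type preservation forces $g$ to fix $\sigma'$, $\sigma$ and $c = \proj_{\sigma}(c')$, so the stabilizer of each $c$ in $G_{\sigma, \sigma'}$ is transitive on $\Ch(\sigma) \setminus \{c\}$, and $2$-transitivity follows (thickness supplying a third chamber to deduce plain transitivity). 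You instead use strong transitivity as transitivity on ordered pairs of opposite chambers and transport the whole configuration $(c_1, c_2, \sigma, \sigma', A)$, which requires the extra verification that $c_2$ lies in the apartment spanned by the opposite pair $(c_1, \proj_{\sigma'}(c_2))$; your convexity/gate argument for $c_2 = c^*$ is sound, as is the type-preservation step pinning down $\sigma$ and $\sigma'$. Both proofs thus rest on the same two ingredients (the projection bijection with its opposition criterion, and type preservation), but the paper's version is shorter because it avoids apartments entirely, whereas yours handles ordered pairs directly (no appeal to the point-stabilizer criterion) and additionally records that the element maps the spanned apartment $A$ to $A'$, a small bonus that is not needed for the lemma.
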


\begin{proof}
Let $c \in \Ch(\sigma)$ and $x, y \in \Ch(\sigma)$ be two chambers different from $c$. Let $c' = \proj_{\sigma'}(c)$. Then $x$ and $y$ are both opposite $c'$. Therefore there is $g \in G_{c'}$ mapping $x$ to $y$. Since $G$ is type-preserving, it follows that $g$ fixes $\sigma'$ (because it fixes $c'$) and hence $\sigma$ (because it is the unique panel of $x$, respectively $y$, which is opposite $\sigma'$). Thus $g \in G_{\sigma, \sigma'}$. Moreover $g$ fixes $c'$, and hence also $c = \proj_\sigma(c')$. 

Thus, for any triple $c, x, y$ of distinct chambers in $\Ch(\sigma)$, we have found an element $g \in G_{\sigma, \sigma'}$ fixing $c$ and mapping $x$ to $y$. The $2$-transitivity of $G_{\sigma, \sigma'}$ on $\Ch(\sigma)$ follows. 
\end{proof}

\subsection{A criterion for strong transitivity}

\begin{lemma}\label{lem:ST:criterion}
Let  $\Delta$ be a thick Euclidean building and $G \leq \Aut(\Delta)$ be a group of type-preserving automorphisms. Then the following conditions are equivalent. 

\begin{enumerate}[(i)]
\item For each chamber  $c \in \Ch(\partial \Delta)$, the group $G_c^0$ is transitive on the set of chambers at infinity opposite $c$. 

\item For each chamber  $c \in \Ch(\partial \Delta)$ and each panel $\pi$ which is opposite to a panel of $c$, the group $G^0_{c, \pi} = G^0_c \cap G_\pi$ is transitive on the set $\Ch(\pi) \setminus \proj_\pi(c)$. 

\item $G$ is strongly transitive on $\Delta$. 
\end{enumerate}

\end{lemma}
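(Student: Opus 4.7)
The plan is to prove the cycle (iii) $\Rightarrow$ (i) $\Rightarrow$ (ii) $\Rightarrow$ (iii). The first two implications are formal and rest on the type-preserving hypothesis together with standard properties of retractions; the last contains the main substance.

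For (iii) $\Rightarrow$ (i), fix $c \in \Ch(\partial\Delta)$ and chambers $c_1, c_2$ opposite $c$. Let $A_i$ be the unique apartment of $\Delta$ whose boundary contains $c$ and $c_i$. Since $A_1$ and $A_2$ share the ideal chamber $c$, they share a sector $Q$ of $\Delta$ pointing to $c$. Pick a chamber $C$ of $\Delta$ inside $Q$. By (iii) there is $g \in G$ with $g(A_2) = A_1$ and $g(C) = C$; being type-preserving, $g$ fixes $C$ pointwise. Because the affine Weyl group of $A_1$ acts freely (and transitively) on chambers of $A_1$ type-preservingly, $g|_{A_2}$ is the unique type-preserving isomorphism $A_2 \to A_1$ fixing $C$, and hence coincides with $\rho_{A_1, C}|_{A_2}$, which fixes $A_1 \cap A_2 \supseteq Q$ pointwise. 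Therefore $g$ fixes $Q$, so $g(c) = c$; and since the opposite of $c$ in the thin apartment $\partial A_1$ is unique, $g(c_2) = c_1$. As $g$ fixes the point $C$ of $\Delta$, $g \in G_c^0$.

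For (i) $\Rightarrow$ (ii), let $c$, a panel $\tau$ of $c$, a panel $\pi$ opposite $\tau$, and $c_1, c_2 \in \Ch(\pi) \setminus \{\proj_\pi(c)\}$ be given. A standard spherical building fact says both $c_i$ are opposite $c$, so by (i) there is $g \in G_c^0$ with $g(c_1) = c_2$. Being type-preserving and fixing the chamber $c$, $g$ fixes every face of $c$; in particular $g(\tau) = \tau$. For each $i$, $\pi$ is the panel of $c_i$ opposite $\tau$ in the unique apartment of $\partial\Delta$ containing $c$ and $c_i$; because $g$ preserves opposition and maps the apartment of $(c, c_1)$ to that of $(c, c_2)$, we conclude $g(\pi) = \pi$, so $g \in G^0_{c, \pi}$.

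The implication (ii) $\Rightarrow$ (iii) is the main obstacle. The strategy proceeds in three stages. \emph{Stage~1} derives strong transitivity of $G$ on $\partial\Delta$: the graph on chambers opposite a fixed $c$, with edges linking chambers that share a panel, is connected in any thick spherical building, so iterating (ii) along such a path gives $G_c^0$ transitive on the set of opposites of $c$; combined with the thickness-based observation that any two chambers of $\partial\Delta$ admit a common opposite, this yields $G$ transitive on $\Ch(\partial\Delta)$, hence strong transitivity on $\partial\Delta$. \emph{Stage~2}: an apartment of $\Delta$ is determined by its boundary, so $G$ acts transitively on apartments of $\Delta$. \emph{Stage~3} is the delicate step: one must show that the apartment stabiliser $G_A$ acts transitively on the chambers of $A$, equivalently that its image in $\Aut(A)$ contains the full affine Weyl group $W_{\mathrm{aff}} = \Lambda \rtimes W_0$. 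The spherical factor $W_0$ already sits inside this image thanks to Stage~1, since strong transitivity on $\partial\Delta$ makes $G_A$ act transitively on $\Ch(\partial A)$; so the crux is producing the translation lattice $\Lambda$. For this, fix opposite ideal chambers $c, c' \in \partial A$ and work inside $G_c \cap G_{c'} \subseteq G_A$, a subgroup that necessarily acts on $A$ by translations only (since the only element of the finite Weyl group of $\partial A$ fixing both $c$ and $c'$ is trivial). The plan is then to use (ii), together with the tree decomposition $P(\tau, \pi) \cong T \times \RR^{n-1}$ of Lemma~\ref{lem:TreeWalls} applied to opposite panels $\tau \subset c$ and $\pi \subset c'$, to manufacture for each wall of $A$ an element of $G_c \cap G_{c'}$ translating $A$ perpendicularly to that wall. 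Executing this final construction --- essentially reading off a BN-pair/RGD structure from the root-group-style transitivity encoded in (ii) --- is where the hardest work lies.
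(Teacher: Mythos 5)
Your proofs of (iii) $\Rightarrow$ (i) and (i) $\Rightarrow$ (ii) are correct and essentially the same as the paper's. The genuine gap is in (ii) $\Rightarrow$ (iii): what you give there is a programme, not a proof, and you explicitly leave unexecuted the step carrying all the weight (manufacturing, inside $G_c \cap G_{c'}$, translations of $A$ adapted to each wall). Two further problems with the outline as it stands. First, Stage~1 rests on the claim that, in any thick spherical building, the graph of chambers opposite a fixed chamber (adjacency being sharing a panel) is connected; this is not true in general --- it fails for certain thick spherical buildings with small parameters --- so even the reduction to strong transitivity on $\partial\Delta$ is not secured by the argument you give. Second, even granting Stages~1--3 as stated, knowing that the image of $\Stab_G(A)$ in $\Aut(A)$ surjects onto the finite Weyl group (linear parts) and contains some full-rank invariant lattice of translations does not yet give chamber-transitivity on $A$: the translations must be by (at least) the coroot lattice, i.e. their lengths must be calibrated against consecutive parallel walls, and that calibration is exactly the construction you have not carried out.

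The paper closes this implication by a different and more economical device, which is the idea missing from your proposal. From (ii) one first proves a half-apartment exchange claim: if two apartments $A'$, $A''$ of $\Delta$ intersect in a half-apartment, then there is $u \in G$ (indeed $u \in G^0_{c,\sigma'}$ for an ideal chamber $c$ having a panel $\sigma$ on $\partial(A' \cap A'')$ and $\sigma'$ opposite $\sigma$) mapping $A'$ to $A''$ and fixing $A' \cap A''$ pointwise. This claim gives transitivity on apartments (via the analogous exchange statement for half-apartments at infinity, no far-chamber connectedness needed); and, crucially, for any wall $M$ of an apartment $A$ with half-apartments $H$, $H'$, thickness provides a third half-apartment $H''$ with $H \cup H''$ and $H' \cup H''$ apartments, and a composition $r = vuw$ of three exchange elements stabilizes $A$, fixes $M$ pointwise and swaps $H$ and $H'$ --- that is, it realizes the reflection through $M$. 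Since this holds for every wall of $A$, the stabilizer $\Stab_G(A)$ realizes the full affine Weyl group and is therefore transitive on the chambers of $A$. In short: use (ii) to build foldings along half-apartments and compose them into every affine reflection, rather than trying to build a translation lattice directly.
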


\begin{proof}
(i) $\Rightarrow$ (ii): The hypothesis implies that $G$ is strongly transitive on the spherical building $\partial \Delta$. In particular $G$ is transitive on the set of apartments of $\Delta$.

Let $c \in \Ch(\partial \Delta)$, $\sigma$ be a panel of $c$ and  $\sigma' $ be a panel   which is opposite $\sigma$. Let also $x$ and $y$ be two chambers in  $\Ch(\sigma')$ that are both different from the projection $ \proj_{\sigma'}(c)$. In particular $x$ and $y$ are both opposite $c$. Therefore, there exists $g \in G^0_c$ mapping $x$ to $y$. Notice that $g$ fixes $\sigma$ since $G$ is type-preserving. Since $\sigma'$ is the unique panel of $x$ (resp., $y$) which is opposite $\sigma$, it follows that $g$ fixes  $\sigma'$ as well. Therefore $g \in G^0_{c, \sigma'}$. Thus (ii) holds.

\medskip
\noindent
(ii) $\Rightarrow$ (iii): We start with a basic observation. Let $A'$ and $A''$ be two apartments of $\Delta$ such that the intersection $A' \cap A''$ is a half-apartment. We claim that there is some $u \in G$ mapping $A'$ to $A''$ and fixing $A' \cap A''$ pointwise. 

Indeed, let $c \in \Ch(\partial (A' \cap A''))$ be an ideal chamber having a panel on the boundary of $\partial (A' \cap A'')$. 
The chamber $c$ has a unique opposite chamber $c'$ (resp., $c''$) in $\partial A'$ (resp., $\partial A''$). Notice that $c$ has some panel $\sigma$ on the boundary of the wall $A' \cap A''$; similarly the chambers $c'$ and $c''$ share a common panel $\sigma'$ which is opposite $\sigma$. By hypothesis, there is some $u \in G_{c, \sigma'}^0$ mapping $c'$ to $c''$. It follows that $u$ maps $A'$ to $A''$. Moreover, since $u$ fixes pointwise an entire sector of $A'$, it follows that $u$ fixes pointwise the intersection $A' \cap A''$, which is a half-apartment. The claim stands proven. 

\medskip 
The claim implies similar statements for half-apartments at infinity. It follows in turn that the stabilizer $G_c$ of every chamber at infinity is transitive on the set of chambers opposite $c$. In particular $G$ acts transitively on the collection of all apartments. Therefore, all it remains to show is that the stabilizer of each individual apartment acts transitively on the set of chambers of that apartment. 

Let thus  $A$ be an apartment of $\Delta$ and  $M$ be a wall of $A$. Let also $H$ and $H'$ be the two half-apartments of $A$ determined by $M$. Since $\Delta$ is thick, there exists a half-apartment $H''$ such that $H \cup H''$ and $H' \cup H''$ are both apartments of $\Delta$. By the claim above, we can find an element $u \in G$ fixing $H$ pointwise and mapping $H'$ to $H''$. Similarly, there are elements $v, w \in G$ fixing $H'$ pointwise and such that $v(H'') = H$ and $w(H) = u^{-1}(H')$. Now we set $r = vuw$. By construction $r$ fixes pointwise the wall $M$. Moreover we have
$$r(H) = vu u^{-1}(H') = v(H') = H'$$
and 
$$r(H') = vu(H') = v(H'') = H,$$
so that $r$ swaps $H$ and $H'$. It follows that $r$ stabilizes the apartment $A$ and acts on $A$ as the reflection through the wall $M$. Since this holds for an arbitrary wall of $A$, it follows that $\Stab_G(A)$ contains elements that realize every reflection of $A$. In particular $\Stab_G(A)$ is transitive on the chambers of $A$, as desired.

\medskip
\noindent
(iii) $\Rightarrow$ (i):
Let $x$ and $x'$ be two chambers opposite $c \in \Ch(\partial \Delta)$. Let moreover $A$ and $A'$ be the two apartments of $\Delta$ determined by the pairs $\{c, x\}$ and $\{c, x'\}$. They must share a common sector $s$ pointing to $c$. In particular they contain a common chamber $C \in \Ch(\Delta)$. By hypothesis there exists $g \in G$ mapping $A$ to $A'$ and fixing $C$. Since $G$ is type-preserving, it follows that $g$ fixes the intersection $A \cap A'$ pointwise. In particular $g$ fixes $s$ pointwise, and hence preserves $c$. Therefore $g \in G_c$. Since moreover $g$ fixes $C$ we have in fact $g \in G^0_c$, as desired. 
\end{proof}

Lemma~\ref{lem:ST:criterion} already implies that the desired equivalence holds in the special case of trees:

\begin{corollary}
\label{cor:trees}
Let $T$ be a thick tree and $G \leq \Aut(T)$ be an automorphism subgroup. If $G$ is $2$-transitive on $T(\infty)$, then $G$ is strongly transitive on $T$. 
\end{corollary}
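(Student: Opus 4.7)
The plan is to invoke Lemma~\ref{lem:ST:criterion}. In the tree case the spherical building $\partial T$ is $0$-dimensional, so its chambers are the ends of $T$ and two ends are opposite precisely when they are distinct. Condition~(i) of that lemma therefore reduces, for trees, to requiring that for each end $c \in \partial T$, the subgroup $G_c^0$ acts transitively on $\partial T \setminus \{c\}$. (Up to passing to a finite-index type-preserving subgroup the hypotheses of Lemma~\ref{lem:ST:criterion} are met.) This is what I shall verify from the assumption that $G$ is $2$-transitive on $\partial T$.

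Fix $c \in \partial T$ and two distinct ends $c'_1, c'_2$ different from $c$. By $2$-transitivity there is some $g \in G_c$ with $g(c'_1) = c'_2$, and the plan is to modify $g$ so that it lies in $G_c^0$ while still sending $c'_1$ to $c'_2$. Lemma~\ref{lem:BusemanRetraction} provides a homomorphism $\beta_c \colon G_c \to \Aut(A)$ (for any apartment $A$ of $T$ with $c$ on its boundary) taking values in the abelian translation group of $A$ and with kernel exactly $G_c^0$. In the principal case, $g$ is a hyperbolic isometry whose translation axis is a line from $c$ to some second end $c''$; the dynamics of hyperbolic tree isometries preclude $c'' \in \{c'_1, c'_2\}$, since otherwise $g$ would fix $c'_1$ or $c'_2$, contradicting $g(c'_1) = c'_2 \neq c'_1$.

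Applying $2$-transitivity once more, I pick $h \in G_c$ with $h(c'') = c'_2$ and set $g_1 := h g h^{-1}$. Then $g_1 \in G_c$ is hyperbolic with translation axis going from $c$ to $c'_2$, and hence fixes $c'_2$; moreover, since the image of $\beta_c$ is abelian, conjugation in $G_c$ preserves $\beta_c$, giving $\beta_c(g_1) = \beta_c(g)$. Consequently $g' := g_1^{-1} g$ satisfies $\beta_c(g') = 0$ and therefore lies in $G_c^0$, while $g'(c'_1) = g_1^{-1}(c'_2) = c'_2$ because $g_1$ fixes $c'_2$. This furnishes the desired element of $G_c^0$, and Lemma~\ref{lem:ST:criterion} then yields the strong transitivity of $G$ on $T$. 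The delicate point I expect to require the most care is the possibility that $g$ be parabolic rather than hyperbolic (so that $\beta_c(g) \neq 0$ yet $g$ has no translation axis to conjugate along); this is handled by the additional observation that, under the $2$-transitivity hypothesis, every value in $\beta_c(G_c)$ is already attained by a hyperbolic element, after which the argument above applies verbatim.
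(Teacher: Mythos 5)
Your argument is correct and follows the same skeleton as the paper's proof: reduce to condition (i) of Lemma~\ref{lem:ST:criterion} and use the homomorphism $\beta_c$ of Lemma~\ref{lem:BusemanRetraction}, whose kernel is $G_c^0$ and whose image is abelian. The only real difference is how the element of $G_c^0$ is produced: the paper first establishes the global decomposition $G_c = G_{c,c'}.G_c^0$ (using that $\beta_c(G_c)$ is cyclic and generated by the image of a hyperbolic element of minimal translation length, then invoking normality of $G_c^0$), whereas you correct each individual $g\in G_c$ by the conjugate $g_1=hgh^{-1}$, which has the same $\beta_c$-image but fixes the target end; both devices work. Two small remarks: your ``parabolic'' case is vacuous, since every automorphism of a simplicial tree is either elliptic or hyperbolic, so no appeal to $2$-transitivity is needed there; the case you should state explicitly instead is the elliptic one, where $g$ already lies in $G_c^0$ by definition and nothing is to be done.
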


\begin{proof}
The group $G$ must contain some hyperbolic isometry, otherwise it would fix a point in $T$ or $T(\infty)$, which is absurd. By the $2$-transitivity on the set of ends, it follows that $G$ contains hyperbolic isometries fixing any given pair of ends of $T$. Given $c \in T(\infty)$, let $\beta_c $ be the homomorphism from Lemma~\ref{lem:BusemanRetraction}. Remark that for the case of the tree one does not need the type-preserving condition in the hypothesis of Lemma~\ref{lem:BusemanRetraction}. Also remark that the image of the homomorphism $\beta_c $ is a cyclic group. Therefore the restriction of $\beta_c$ to the cyclic subgroup of $G_c$ generated by a hyperbolic isometry $t$ of minimal non-zero translation length is onto. Denoting the other fixed end of $t$ by $c'$, we deduce from Lemma~\ref{lem:BusemanRetraction}  that $G_c = G_{c, c'}.G_c^{0}$. Since $G_c$ is transitive on the ends of $T$ different from $c$, the same holds for  $G_c^0$. The desired conclusion now follows from Lemma~\ref{lem:ST:criterion}.
\end{proof}

\subsection{Apartments have cocompact stabilizers}

Let $\Delta$ be a thick Euclidean building and $G \leq \Aut(\Delta)$ be a group of type-preserving automorphisms. Remark that if $G$ is strongly transitive on $\partial \Delta$, then it is transitive on the set of apartments of $\Delta$. Therefore,  under the latter assumption $G$ is strongly transitive on $\Delta$ if and only if the stabilizer of any apartment is transitive on the set of chambers of that apartment. 
Our next goal is to establish the following approximation of this transitivity property. 

\begin{proposition}
\label{prop:CocompactApt}
Let $\Delta$ be a  thick Euclidean building and $G \leq \Aut(\Delta)$ be a group of type-preserving automorphisms. Assume that $G$ is strongly transitive on $\partial \Delta$. Then for any apartment $A$ of $\Delta$, the stabilizer $\Stab_G(A)$ acts cocompactly on $A$ (i.e., with finitely many orbits of chambers). 
\end{proposition}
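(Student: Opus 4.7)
The plan is to exhibit, inside $\Stab_G(A)$, a subgroup acting cocompactly on $A$ by combining lifts of the spherical Weyl group with translations extracted from the wall trees at infinity. A preliminary observation is that $A$ is uniquely determined by $\partial A$, since any pair of opposite ideal chambers generates a unique apartment of $\Delta$ containing both in its boundary; hence the strong transitivity of $G$ on $\partial \Delta$ restricts to strong transitivity of $\Stab_G(A) = \Stab_G(\partial A)$ on the spherical apartment $\partial A$, and the induced map $\Stab_G(A) \to \Aut(\partial A)$ has image containing the spherical Weyl group $W_{\mathrm{sph}}$ of $A$. Fix a chamber $c \in \Ch(\partial A)$ with panels $\sigma_1, \ldots, \sigma_n$ (where $n = \dim A$), and for each simple reflection $s_i \in W_{\mathrm{sph}}$ pick a lift $\tilde s_i \in \Stab_G(A)$ whose induced action on $A$ is affine with linear part $s_i$.

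Next, write $\sigma_i' \subset \partial A$ for the panel opposite $\sigma_i$. By Lemma~\ref{lem:OppPanels}, the group $G_{\sigma_i, \sigma_i'}$ acts $2$-transitively on $\Ch(\sigma_i)$. Under the identification $P(\sigma_i, \sigma_i') \cong T_i \times \RR^{n-1}$ of Lemma~\ref{lem:TreeWalls}, with $\Ch(\sigma_i) \leftrightarrow \partial T_i$, this gives $2$-transitivity on $\partial T_i$; Corollary~\ref{cor:trees} then yields strong transitivity on $T_i$. Inside the product, $A$ appears as $\ell_i \times \RR^{n-1}$ for a line $\ell_i \subset T_i$, and elements of $G_{\sigma_i, \sigma_i'}$ preserve the canonical splitting while acting on the Euclidean factor by translations (being type-preserving and fixing $\sigma_i$ pointwise). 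Hence strong transitivity on $T_i$ supplies an element $h_i \in G_{\sigma_i, \sigma_i'} \cap \Stab_G(A)$ whose action on $T_i$ is a hyperbolic translation along $\ell_i$ and whose action on $A$ is the translation by a vector of the form $v_i = t_i \alpha_i^\vee + w_i$, where $\alpha_i^\vee$ is the unit vector perpendicular to the wall of $A$ containing $\sigma_i$, $t_i \neq 0$, and $w_i$ lies inside that wall.

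Finally, since the translation subgroup of $\Aut(A)$ is normal and the affine group conjugates translations via its linear part, the commutator $T_i := h_i \cdot (\tilde s_i h_i \tilde s_i^{-1})^{-1}$ is the pure translation of $A$ by $v_i - s_i(v_i) = 2 t_i \alpha_i^\vee$, and lies in $\Stab_G(A)$. Since $\alpha_1^\vee, \ldots, \alpha_n^\vee$ form a basis of the vector space $V$ of translations of $A$, the elements $T_1, \ldots, T_n$ generate a lattice $\Lambda \subset V$ of full rank, cocompact in $V$. Combined with the lifts $\tilde s_1, \ldots, \tilde s_n$, this produces a subgroup of $\Stab_G(A)$ whose action on $A$ contains $W_{\mathrm{sph}} \ltimes \Lambda$ and is therefore cocompact; in particular $\Stab_G(A)$ has finitely many orbits on $\Ch(A)$.

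The main delicate point is the geometric matching in the second paragraph: verifying that $A$ sits in $P(\sigma_i, \sigma_i')$ compatibly with the product decomposition, and that a hyperbolic element of $G_{\sigma_i, \sigma_i'}$ with axis $\ell_i$ in $T_i$ then automatically stabilizes $A$ and translates it in a direction with nonzero $\alpha_i^\vee$-component. Once this is established, the Weyl-group symmetrization and the spanning property of the simple coroots close the argument.
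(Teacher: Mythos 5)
Your argument is correct, but the second half takes a genuinely different route from the paper. Both proofs feed the same input into the wall trees: Lemma~\ref{lem:OppPanels} plus Lemma~\ref{lem:TreeWalls} plus Corollary~\ref{cor:trees} give strong transitivity of $G_{\sigma,\sigma'}$ on the tree $T$. The paper extracts from this, for each wall of $A$, an element of $\Stab_G(A)$ acting on the tree line as the point symmetry through the corresponding vertex (hence swapping the two complementary half-apartments), and then concludes by a soft argument: a minimal $\Stab_G(A)$-invariant affine subspace $V\subseteq A$ is neither a point nor contained in a wall, so $V=A$, and a discrete group of Euclidean isometries acts cocompactly on such a minimal subspace. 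You instead extract \emph{translations} (hyperbolic elements along the tree line) and symmetrize by a lift of the simple reflection to produce the pure translations $2t_i\alpha_i^\vee$, yielding an explicit full-rank lattice in $\Stab_G(A)$; this is more constructive and avoids both the Bieberbach-type fact and the case analysis on $V(\infty)$, at the cost of needing the lifts $\tilde s_i$ and the geometric matching you flag. Two small repairs: (a) your parenthetical reason that $h_i$ translates the Euclidean factor (``type-preserving and fixing $\sigma_i$ pointwise'') is not quite enough, since fixing the panel $\sigma_i$ pointwise does not a priori trivialize the action on the whole $(n-2)$-sphere at infinity of $\RR^{n-1}$; the correct argument is that $h_i$ fixes both ends of $\ell_i$, hence both chambers of $\partial A$ containing $\sigma_i$, hence (being type-preserving and stabilizing $\partial A$) acts trivially on all of $\partial A$ and therefore as a pure translation on $A$ with nonzero $\alpha_i^\vee$-component. (b) When $\dim\Delta=1$ the panels at infinity degenerate to the empty simplex and your setup does not literally apply; handle the tree case separately via Corollary~\ref{cor:trees}, as the paper does at the outset of its proof.
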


\begin{proof}
In view of Corollary~\ref{cor:trees}, this holds clearly if $\Delta$ is a tree. We assume henceforth that $\Delta$ is higher-dimensional. 

\medskip
We start with a preliminary observation. Let $H, H'$ be two complementary half-apartments of $A$. We claim that there is some $g \in \Stab_G(A)$ which swaps $H$ and $H'$; in particular $g$ stabilizes the common boundary wall $\partial H = \partial H'$. 

In order to prove the claim, choose a pair of opposite panels $\sigma, \sigma'$ lying on the boundary at infinity of the wall $\partial H$. Notice that it makes sense to consider panels at infinity since $\Delta$ is not a tree, and thus the spherical building $\partial \Delta$ has positive rank. By Lemma~\ref{lem:OppPanels}, the stabilizer $G_{\sigma, \sigma'}$ acts $2$-transitively on $\Ch(\sigma)$. We now invoke Lemma~\ref{lem:TreeWalls} which provides a canonical isomorphism $P(\sigma, \sigma') \cong T \times \RR^{n-1}$, where $n = \dim(\Delta)$ and $T$ is a thick tree. The set $\Ch(\sigma)$ being in one-one correspondence with $T(\infty)$, we infer that $G_{\sigma, \sigma'}$ is $2$-transitive on $T(\infty)$, and hence strongly transitive on $T$ by Corollary~\ref{cor:trees}. Therefore, if $v$ (resp., $D$) denotes the vertex (resp., geodesic line) of $T$ corresponding to the wall $\partial H$ (resp., the apartment $A$), we can find some $g \in \Stab_G(A)$ stabilizing $D$ and acting on it as the symmetry through $v$. It follows that $g$ stabilizes $A$, the wall $\partial H  = \partial H'$ and swaps the two half-apartments $H$ and $H'$. This proves the claim. (We warn the reader that $g$ might however act non-trivially on the wall $\partial H$, so that it is not clear a priori that $g$ is the reflection through that wall.) 

\medskip
Let $V \subseteq A$ be a minimal Euclidean subspace invariant under $\Stab_G(A)$. Because of the above paragraph $\Stab_G(A)$ is not fixing any point in $A$. Therefore $V$ is not a point. Since $\Stab_G(A)$ acts on $A$ as a discrete group of Euclidean isometries, its action on $V$ is cocompact on $V$. Therefore, it suffices to show that $V=A$. Now, the boundary $V(\infty)$ is invariant under $\Stab_G(A)$. Since $G$ is strongly transitive on $\partial \Delta$, it follows that if $V(\infty) \neq \partial A$, then $V(\infty)$ is contained in some wall at infinity. Equivalently, $V$ is contained in some wall of $A$, say $M$. Let $M'$ be a wall parallel to, but distinct from $M$. By the claim above, we find an element $g \in \Stab_G(A)$ stabilizing $M'$ and swapping the half-apartments determined by it. In particular $g(M) \cap M $ is empty, which contradicts the fact that $g$ stabilizes $V$. This completes the proof. 
\end{proof}

\subsection{Geometric Levi decomposition}

A geometric analogue of the Levi decomposition of parabolic subgroups of semi-simple groups has been established in  \cite[Theorem~J]{CaMo} in the context of isometry groups of proper CAT(0) spaces. The following result can also be viewed as such a Levi decomposition. Note however that it concerns buildings that are not assumed to be locally finite, so it cannot be proved by invoking [loc. cit.].

\begin{lemma}\label{lem:Levi}
Let $\Delta$ be a   thick Euclidean building and $G \leq \Aut(\Delta)$ be a group of type-preserving automorphisms. Assume that $G$ is strongly transitive on $\partial \Delta$. 

Then for any pair $c, c'$ of opposite chambers  at infinity, the group $G_{c, c'}.G_c^0$ is of finite index in $G_c$. Moreover $G_c^0$ has finitely many orbits on the set of chambers opposite $c$, and each of these orbits is invariant under $G_{c, c'}$. 
\end{lemma}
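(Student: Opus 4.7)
My plan is to study the group homomorphism $\beta_c\colon G_c\to \Aut(A)$ from Lemma~\ref{lem:BusemanRetraction}, where $A$ denotes the unique apartment with $c,c'\in\partial A$, and to prove that $\beta_c(G_{c,c'})$ has finite index in $\beta_c(G_c)$. Since $\ker \beta_c=G_c^0$ by Lemma~\ref{lem:BusemanRetraction}, this finite index equals $[G_c:G_c^0\cdot G_{c,c'}]$, which yields the first assertion at once.

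I would begin by identifying $G_{c,c'}$ with $\Stab_G(A)\cap G_c$. Any element of that intersection fixes a chamber of the spherical Coxeter complex $\partial A$ while preserving types, hence it fixes $\partial A$ pointwise; in particular it fixes $c'$ and acts on $A$ as a pure translation. The quotient $\Stab_G(A)/G_{c,c'}$ therefore embeds into the finite spherical Weyl group of $\partial A$, so $G_{c,c'}$ has finite index in $\Stab_G(A)$, and Proposition~\ref{prop:CocompactApt} now forces $G_{c,c'}$ itself to act cocompactly on $A$.

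Next I would observe that $\beta_c(G_c)$ is a group of type-preserving translations of $A$, all of which preserve the set of special vertices of a fixed type; hence $\beta_c(G_c)$ lies inside a discrete translation lattice $L\cong \ZZ^n$ of the translation group of $A$. Since elements of $G_{c,c'}$ stabilize $A$, the retraction is trivial on them, so $\beta_c|_{G_{c,c'}}$ is nothing but the ordinary action of $G_{c,c'}$ on $A$. Consequently $\beta_c(G_{c,c'})$ is a cocompact sublattice of $L$, and therefore of finite index in the intermediate discrete group $\beta_c(G_c)$. By strong transitivity on $\partial \Delta$ the chambers opposite $c$ form a single $G_c$-orbit identified with $G_c/G_{c,c'}$, so the number of $G_c^0$-orbits on this set coincides with $|\beta_c(G_c)/\beta_c(G_{c,c'})|$, which is now known to be finite.

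For the $G_{c,c'}$-invariance of each orbit, I will exploit that $\beta_c(G_c)$ is abelian. Given a chamber $c''$ opposite $c$, strong transitivity on $\partial \Delta$ produces $g\in G_c$ with $g(c')=c''$, whence $G_{c,c''}=gG_{c,c'}g^{-1}$; applying $\beta_c$ and using commutativity of its image gives $\beta_c(G_{c,c''})=\beta_c(G_{c,c'})$. Hence any $h\in G_{c,c'}$ is congruent modulo $G_c^0$ to some $h'\in G_{c,c''}$, so $h(c'')\in G_c^0\cdot c''$, which says precisely that $h$ preserves the $G_c^0$-orbit of $c''$. The step I expect to require the most care is the verification that $\beta_c(G_c)$ is actually discrete---without type-preservation a cocompact sublattice could have infinite index in a larger translation subgroup---whereas everything else is essentially a matter of assembling the previously established results.
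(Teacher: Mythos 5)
Your proposal is correct and follows essentially the same route as the paper: cocompactness of $G_{c,c'}=\Stab_G(A)\cap G_c$ on $A$ via Proposition~\ref{prop:CocompactApt}, the homomorphism $\beta_c$ of Lemma~\ref{lem:BusemanRetraction} with kernel $G_c^0$ and abelian (translation) image to get finiteness of $[G_c:G_c^0G_{c,c'}]$, and commutativity of that image for the $G_{c,c'}$-invariance of the $G_c^0$-orbits. The only cosmetic differences are that you count orbits via double cosets and make the discreteness of $\beta_c(G_c)$ explicit through the lattice of special vertices, where the paper instead works with finite index in the discrete group $\Aut(A)$ and with the finite-index subgroup $H=G_c^0G_{c,c'}$.
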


\begin{proof}
Let $c, c'$ be a pair of opposite chambers  at infinity, and let $A$ be the unique apartment of $\Delta$ that they determine. 
 Since $\Stab_G(A)$ acts cocompactly on $A$ by Proposition~\ref{prop:CocompactApt}, so does the translation subgroup of $\Stab_G(A)$, which coincides with $G_{c, c'}$. 
 
Let now $\beta_c \colon G_c \to \Aut(A)$ be the homomorphism from Lemma~\ref{lem:BusemanRetraction}. Then for any $g \in G_{c, c'}$, we have $\beta_c(g) = g|_A$. We have just observed that $G_{c, c'}$ acts cocompactly on $A$, so that $\beta_c(G_{c, c'})$ is of finite index in $\Aut(A)$. In particular, the group $H = G^0_c.G_{c, c'} = \beta_c^{-1}(\beta_c(G_{c, c'}))$  is of finite index in $G_c$. In particular it acts with finitely many orbits on the set $\Opp(c)$ consisting of all chambers opposite $c$. 

We next claim that every $G^0_c$-orbit is invariant under $G_{c, c'}$. Indeed, given $d \in \Opp(c)$, there is some $x \in G_c$ with $d = x(c')$. Noticing that the quotient $G_c/G^0_c$ is abelian by Lemma~\ref{lem:BusemanRetraction}, and hence that $H$ is normal in $G_c$, we find 
$$\begin{array}{rcl}
G_{c, c'}(G^0_c(d)) & = & H(x(c')) \\
&= & x(H(c'))\\
& = & x(G^0_c.G_{c, c'}(c'))\\
& = & xG^0_c(c')\\
& = & G^0_c(x(c'))\\
&=& G^0_c(d).
\end{array}
$$
Thus the $ G^0_c$-orbit of $d$ is indeed left invariant by $G_{c, c'}$, as claimed. It follows that the $H$-orbits on $\Opp(c)$ coincide with the $G^0_c$-orbits, and hence, that there are only finitely many of these.
\end{proof}

In view of Lemma~\ref{lem:ST:criterion}, the final step in proving that $G$ is strongly transitive on $\Delta$ consists in showing that the index of $G_{c, c'}.G^0_c$ in $G_c$ is actually equal to one, since this means that $G^0_c$ is transitive on the chambers opposite $c$. It is only for this last step that we need to assume $\Delta$ to be locally finite and $G$ to be a closed subgroup of $\Delta$. The desired equality  $G_c = G_{c, c'}.G^0_c $ can then be deduce from the above results by invoking 
 \cite[Theorem~J]{CaMo}; alternatively, at this point it is also possible to provide a  direct argument. 

\begin{lemma}
\label{lem:Levi:decom}
Let $\Delta$ be a   thick Euclidean building and $G \leq \Aut(\Delta)$ be a group of type-preserving automorphisms. Assume that $G$ is strongly transitive on $\partial \Delta$.  Assume in addition that $\Delta$ is locally finite and that $G$ is closed. 

Then $G$ is strongly transitive on $\Delta$. 
\end{lemma}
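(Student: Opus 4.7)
The plan is to combine Lemma~\ref{lem:ST:criterion} and Lemma~\ref{lem:Levi} with a dynamical argument based on Theorem~\ref{thm:ExistenceStronglyReg}. By Lemma~\ref{lem:ST:criterion}, it suffices to show that $G_c^0$ acts transitively on $\Opp(c)$ for every $c \in \Ch(\partial \Delta)$. Since Lemma~\ref{lem:Levi} says that each $G_c^0$-orbit on $\Opp(c)$ is $G_{c,c'}$-invariant (so it coincides with the $G_{c,c'}G_c^0$-orbit through the same point), and since $G_c$ acts transitively on $\Opp(c) \cong G_c/G_{c,c'}$, the number of such orbits equals the finite index $[G_c : G_{c,c'}G_c^0]$. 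Hence the task reduces to establishing the Iwasawa-type factorisation $G_c = G_c^0 \cdot G_{c,c'}$ for one (and hence every) $c' \in \Opp(c)$.

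First I would observe that $G$ acts cocompactly on $\Delta$: by strong transitivity on $\partial \Delta$ the group $G$ is transitive on apartments of $\Delta$, and Proposition~\ref{prop:CocompactApt} ensures that $\Stab_G(A)$ is cocompact on any apartment $A$. Theorem~\ref{thm:ExistenceStronglyReg} then produces a strongly regular hyperbolic element in $G$, and after conjugating by a suitable element of $G$ (using strong transitivity on $\partial \Delta$) one obtains such an element $a \in G_{c,c'}$ with translation apartment $A$ the unique apartment determined by $(c,c')$ and $c$ as the attracting ideal chamber; let $v_a = \beta_c(a)$ be its translation vector on $A$. This $a$ will play the role of a regular element used to extract a ``Levi part'' from an arbitrary element of $G_c$.

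Given $g \in G_c$, set $v = \beta_c(g)$ and consider the conjugated sequence $h_n = a^{-n} g a^n \in G_c$. Since $G_c/G_c^0$ is abelian (the image of $\beta_c$ lies in the translation subgroup of $\Aut(A)$ by Lemma~\ref{lem:BusemanRetraction}), each $h_n$ still satisfies $\beta_c(h_n) = v$. The dynamical computation I would perform is the following: $g$ agrees with the translation by $v$ on the convex set $A \cap g^{-1}(A)$, which contains a sector of $A$ pointing to $c$; and for any $p \in A$, once $n$ is large enough that $a^n(p) = p + n v_a$ lies in this sector, one gets
\[
h_n(p) = a^{-n}\bigl(g(p + n v_a)\bigr) = a^{-n}(p + n v_a + v) = p + v.
\]
Fixing a special vertex $p_0 \in A$, the orbit $\{h_n(p_0)\}$ is therefore eventually constant, hence bounded. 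Because $\Delta$ is locally finite and $G$ is closed in $\Aut(\Delta)$, the $G$-action on $\Delta$ is proper, so $\{h_n\}$ is relatively compact in $G$ and admits a subsequence converging to some $\ell \in G$.

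To conclude, pointwise convergence, together with the eventual constancy of $h_n(p)$ for each $p \in A$, forces $\ell|_A$ to be the translation by $v$. In particular $\ell$ stabilises $A$ and acts trivially on $\partial A$, so $\ell \in G_{c,c'}$ with $\beta_c(\ell) = v = \beta_c(g)$. Hence $g \ell^{-1} \in \Ker(\beta_c) = G_c^0$, yielding $g \in G_c^0 \cdot G_{c,c'}$, as desired. The main obstacle I anticipate is the dynamical step above: verifying carefully that $g$ behaves as a pure translation on sufficiently deep sub-sectors of $A \cap g^{-1}(A)$ pointing to $c$ (which ultimately rests on Lemma~\ref{lem:BusemanRetraction}), and then upgrading the boundedness of a single orbit $\{h_n(p_0)\}$ to relative compactness of $\{h_n\}$ in $G$, which is precisely where the local finiteness of $\Delta$ and the closedness of $G$ are used.
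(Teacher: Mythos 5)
Your proof is correct, and it takes a route that is genuinely different from both arguments the paper gives for this lemma. The paper's first proof reduces to a rank-one situation: it passes to the tree factor $T$ of $P(\sigma,\sigma')$ from Lemma~\ref{lem:TreeWalls}, uses properness of the closed group $G^0_{c,\sigma'}$ on $T$ to show that the orbit of the end corresponding to $c'$ is open, and then lets a hyperbolic element of $G_{c,c'}$ sweep every other end into that open orbit. The paper's alternative proof stays on $\Opp(c)$: openness of $G^0_c$ in $G_c$ makes the finitely many $G^0_c G_{c,c'}$-orbits closed in the cone topology, while Proposition~\ref{prop:DynamRegular} exhibits $c'$ as an accumulation point of each of them, forcing a single orbit. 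You instead establish the factorisation $G_c=G^0_c\cdot G_{c,c'}$ head-on, by conjugating an arbitrary $g\in G_c$ by powers of a strongly regular $a\in G_{c,c'}$ and extracting a limit $\ell=\lim a^{-n_k}ga^{n_k}$ which is forced to be the translation part of $g$; this is exactly the mechanism behind the geometric Levi decomposition of \cite[Theorem~J]{CaMo}, which the paper explicitly mentions as an alternative route but does not carry out. All three arguments consume the hypotheses that $\Delta$ is locally finite and $G$ is closed at the same point (properness of the action, equivalently compactness of vertex stabilisers), so nothing is gained in generality; what your version buys is independence from Lemma~\ref{lem:TreeWalls} and Proposition~\ref{prop:DynamRegular}, at the cost of writing out carefully that $g$ coincides with the translation $\beta_c(g)$ on a subsector of $A\cap g^{-1}(A)$ pointing to $c$ (which is the content of Lemma~\ref{lem:BusemanRetraction}) and that the eventual constancy of $h_n(p)$ for each $p\in A$ passes to the limit $\ell$. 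Two small points to make explicit in a final write-up: the element $a$ must be normalised so that $c$ is its \emph{attracting} chamber, since that is what guarantees $p+nv_a$ eventually enters the fixed subsector for every $p\in A$; and the extraction of a convergent subsequence uses that $\Aut(\Delta)$ is second countable for $\Delta$ locally finite (otherwise one argues with subnets). Neither affects the validity of the argument.
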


\begin{proof}[First Proof of \ref{lem:Levi:decom} ]
We use the criterion from Lemma~\ref{lem:ST:criterion}. Let thus $c \in \Ch(\partial \Delta)$, let $\sigma$ be a panel of $c$ and $\sigma'$ be a panel opposite $\sigma$. 

Since $G$ is strongly transitive on $\partial \Delta$, we know from Lemma~\ref{lem:OppPanels} that $G_{\sigma, \sigma'}$ is $2$-transitive on $\Ch(\sigma)$. Moreover, Lemma~\ref{lem:Levi} implies that $G^0_{c, \sigma'}$ has finitely many orbits on $\Ch(\sigma') \setminus \proj_{\sigma'}(c)$, all of which are invariant under $G_{c, c'}$. 

Recall from Lemma~\ref{lem:TreeWalls} that $G_{\sigma, \sigma'}$ preserves a convex subset $P(\sigma, \sigma') \subset \Delta$ which is canonically isomorphic to a product of a tree $T$ with a flat factor $F$, and that the set of ends of $T$ is canonically in one-to-one correspondence with $\Ch(\sigma)$. We infer that $G^0_{c, \sigma'}$ fixes an end of $T$ and has finitely many orbits on the other ends. Since $G_{\sigma, \sigma'}$ is doubly transitive on the set of ends of $T$, it follows that some element of $G_{c, c'}$ must act as a hyperbolic isometry on $T$. 

Now we use the fact that $G^0_{c, \sigma'}$ is closed in $G$, and hence it acts properly on $P(\sigma, \sigma')$. Moreover, by definition, its induced action on the flat factor $F$ is trivial. It follows that $G^0_{c, \sigma'}$ acts properly on the tree $T$. Since $G^0_c .G_{c, c'}$ has finite index in $G_c$, one has that $G^0_{c, \sigma'} .G_{c, c'}$ is an open subgroup of $G_{c, \sigma'}$. From here we deduce that the $G^0_{c, \sigma'}$-orbit of the end of $T$ corresponding to $c'$ is open. On the other hand, any other orbit of the cyclic group generated by a hyperbolic element in $G_{c, c'}$ has the ends $c$ and $c'$ as limit points. This implies that $G^0_{c, \sigma'} $ is transitive on the set of ends of $T$ different from $c$. Equivalently, the group $G^0_{c, \sigma'} $ is transitive on $\Ch(\sigma') \setminus \proj_{\sigma'}(c)$. The desired conclusion follows from Lemma~\ref{lem:ST:criterion}.
\end{proof}

\begin{proof}[Alternative proof of Lemma \ref{lem:Levi:decom}]
Let $c \in \Ch(\partial \Delta)$, $c' \in \Opp(c)$ and denote by $A$ the unique apartment in $\Delta$ whose boundary contains $c$ and $c'$.

As $G$ is strongly transitive on $\partial \Delta$ we have that $G$ acts transitively on the set of all apartments of $\Delta$. Therefore by Proposition~\ref{prop:CocompactApt} we conclude that $G$ acts cocompactly on $\Delta$. Thus, by Theorem~\ref{thm:ExistenceStronglyReg}, $G$ contains a strongly regular hyperbolic element. Invoking the transitivity of $G$ on the set of apartments of $\Delta$ we conclude that every apartment in $\Delta$ admits a strongly regular hyperbolic element in $G$.

By the assumptions of the lemma we have that $G_{c}^{0}$ is an open subgroup of $G_{c}$. Thus every $G^0_{c} G_{c, c'}$-orbit in $\Opp(c)$ is closed in the cone topology on $\partial \Delta$. Applying Proposition~\ref{prop:DynamRegular} to a strongly regular element with unique translation apartment $A$ we obtain that $c'$ is an accumulation point for every $G^0_{c} G_{c, c'}$-orbit in $\Opp(c)$. By closedness we conclude that there is only one such orbit, thus $G_{c}=G^0_{c} G_{c, c'}$ and therefore $G^0_{c} $ is transitive on $\Opp(c)$. The desired conclusion follows from Lemma~\ref{lem:ST:criterion}.
\end{proof}

\begin{remark}\label{rem:LocallyInfinite}
We have already pointed out that the hypothesis that $\Delta$ to be locally finite and $G$ a closed subgroup of $\Aut(\Delta)$ was only used at the very last stage of the proof, namely in Lemma~\ref{lem:Levi:decom}. Without those extra hypotheses, one could also consider the subgroup $H$ of $G_{\sigma, \sigma'}$ defined by $H = \la G^0_{c, \sigma'} \; | \; c \in \Ch(\sigma) \ra$. By definition, this is a normal subgroup of $G_{\sigma, \sigma'}$ which acts trivially on the flat factor $F$ of $P(\sigma, \sigma')$. As in the proof above, we see that for each $c \in \Ch(\sigma) $ the group $G^0_{c, \sigma'}$ has finitely many orbits of ends on the tree $T$. Since $G_{\sigma, \sigma'}$ is doubly transitive on the set of ends $\partial T$, it follows that $H$ is transitive on $\bd T$ and has finitely many orbits on $\bd T \times \bd T$. This leads us to the following question: \textit{given a thick tree $T$ (not necessarily locally finite), a group $G \leq \Aut(T)$ acting doubly transitively on $\bd T$ and a normal subgroup $H $ of $G$ acting with finitely many orbits on $\bd T \times \bd T$, is it true that $H$ must itself be doubly transitive on $\bd T$?}

A positive answer to this question would imply that Lemma~\ref{lem:Levi:decom} holds without the assumptions on local finiteness of  $\Delta$ or closedness of $G$. In other words, the equivalence between (i) and (ii) from Theorem~\ref{thm:main-thm} would hold in full generality. 
\end{remark}

\subsection{Characterizing strong transitivity in the locally finite case}\label{sec:proof}

We finally record various characterizations of  strong transitivity for locally compact groups acting on  locally finite thick Euclidean buildings. 

\begin{theorem}
\label{transitivity2}
Let $G$ be a locally compact group acting continuously and  properly by type-preserving automorphisms on a locally finite thick Euclidean building $\Delta$.  Then the following are equivalent:
\begin{enumerate}[(i)]
\item
$G$ is strongly transitive on $\Delta$;

\item
$G$ is strongly transitive on $\partial \Delta$;

\item
$G$ has no fixed point in $\partial \Delta$ and for some chamber $c \in   \Ch(\partial \Delta)$, the stabilizer $G_c$ acts cocompactly on $\Delta$;
 
\item
$G$ acts cocompactly on $\Delta$, without a  fixed point in $\partial \Delta$, and there exists a compact open subgroup $K \leq  G$ having finitely many chamber-orbits on $\partial \Delta$.
\end{enumerate}
\end{theorem}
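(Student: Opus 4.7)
The plan is to augment the already-established equivalence (i) $\Leftrightarrow$ (ii) (Lemma~\ref{lem:Levi:decom} and its straightforward converse) by proving the cycle (i) $\Rightarrow$ (iii) $\Rightarrow$ (iv) $\Rightarrow$ (ii), with the last implication being the delicate one and resting on the dynamics of strongly regular hyperbolic elements.

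The implication (i) $\Rightarrow$ (iii) is essentially bookkeeping: strong transitivity forces $G$-transitivity on $\Ch(\partial \Delta)$, so there is no fixed point at infinity. For cocompactness of $G_c$, strong transitivity makes $G_c$ transitive on the set of apartments $A$ with $c \in \partial A$, and within any fixed such $A_0$ the translation subgroup $G_{c, c'_0} \subseteq G_c$ (for $c'_0 \in \Ch(\partial A_0)$ opposite $c$) acts cocompactly on $A_0$ by Proposition~\ref{prop:CocompactApt}; since every chamber of $\Delta$ lies in some apartment through $c$, $G_c$ is cocompact on $\Delta$. For (iii) $\Rightarrow$ (iv), I would take $K = \Stab_G(v_0)$ for a special vertex $v_0$: properness makes this compact open, and cocompactness of $G_c$ translates, via the double-coset bijection $K \backslash (G \cdot c) \simeq G_c \backslash (G \cdot v_0)$, into finitely many $K$-orbits on the $G$-orbit of $c$. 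To handle the remaining $G$-orbits on $\Ch(\partial \Delta)$, I would invoke Theorem~\ref{thm:ExistenceStronglyReg} to produce a strongly regular $a \in G$ with translation apartment $A$; Proposition~\ref{prop:DynamRegular} sends every chamber at infinity into the $G$-orbit of some chamber of $\Ch(\partial A)$ (a finite set), so there are finitely many $G$-orbits on $\Ch(\partial \Delta)$ and, by commensurability of compact open subgroups, finitely many $K$-orbits as well.

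For (iv) $\Rightarrow$ (ii), I again pick a strongly regular $a \in G$ with translation apartment $A$ and repelling chamber $c_-$ via Theorem~\ref{thm:ExistenceStronglyReg}. Proposition~\ref{prop:DynamRegular} gives $a^n \xi \to \rho_{A, c_-}(\xi) \in \Ch(\partial A)$ for each $\xi \in \Ch(\partial \Delta)$. The openness of $K$ makes its orbits on $\Ch(\partial \Delta)$ open, and finiteness of the number of such orbits forces $a^n \xi$ to eventually enter the $K$-orbit of $\rho_{A, c_-}(\xi)$; hence $\xi \in G \cdot \rho_{A, c_-}(\xi) \subseteq G \cdot \Ch(\partial A)$. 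Running this argument over all $G$-conjugates of $a$, one sees that $G$ is transitive on $\Ch(\partial \Delta)$; an analogous argument applied to strongly regular hyperbolic elements having $c$ as attracting chamber would yield transitivity of $G^0_c$ on $\Opp(c)$, and Lemma~\ref{lem:ST:criterion} then delivers strong transitivity on $\Delta$, hence on $\partial \Delta$. The main obstacle will be precisely this last step: orchestrating the contracting dynamics of Proposition~\ref{prop:DynamRegular}, the openness and finiteness of the $K$-orbits, and the criterion of Lemma~\ref{lem:ST:criterion} so as to upgrade the accumulation statement into not merely $G$-transitivity on chambers at infinity but the sharper transitivity of $G^0_c$ on chambers opposite $c$.
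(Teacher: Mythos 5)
Your implication (i) $\Rightarrow$ (iii) is essentially the paper's argument, and the dynamical idea you deploy for (iv) does appear in the paper (in the alternative proof of Lemma~\ref{lem:Levi:decom}). But your cycle runs (iii) $\Rightarrow$ (iv) $\Rightarrow$ (ii), whereas the paper proves (iv) $\Rightarrow$ (iii) $\Rightarrow$ (ii) together with (i) $\Rightarrow$ (iv); this inversion of direction is precisely where your plan has genuine gaps.

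First, in (iii) $\Rightarrow$ (iv) you claim that Proposition~\ref{prop:DynamRegular} ``sends every chamber at infinity into the $G$-orbit of some chamber of $\Ch(\partial A)$.'' The proposition only gives convergence $a^n(\xi) \to \rho_{A,c_-}(\xi)$ in the cone topology; to conclude that $\xi$ lies in the $G$-orbit of the limit you need the relevant orbit to be open (equivalently closed, once the orbits are compact and finite in number) --- and that finiteness is exactly the content of (iv) that you are trying to establish. At the stage of (iii) you only know cocompactness of a single $G_c$ and have no finiteness of orbits at infinity, so the argument is circular. Second, in (iv) $\Rightarrow$ (ii) your accumulation argument only yields $\Ch(\partial\Delta) = G\cdot\Ch(\partial A)$, i.e.\ finitely many $G$-orbits of chambers at infinity; ``running the argument over all conjugates of $a$'' gives nothing new, since conjugates have translation apartments in $G\cdot A$. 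To get actual transitivity the paper shows $\Stab_G(\partial A)$ realizes the full spherical Weyl group by reflections, using that $G_c$ is transitive on $\Opp(c)$ --- which it deduces from cocompactness of $G_c$ via \cite{CM09}*{Proposition~7.1}, an ingredient absent from your proposal. Likewise, the transitivity of $G_c^0$ on $\Opp(c)$ (which you rightly flag as the crux) cannot be extracted from the contraction dynamics alone: the closed-orbit/accumulation trick requires knowing beforehand that $G_c^0 G_{c,c'}$ has only finitely many orbits on $\Opp(c)$, each closed. In the paper this is Lemma~\ref{lem:Levi}, whose proof rests on Proposition~\ref{prop:CocompactApt} and hence already presupposes strong transitivity on $\partial\Delta$. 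The workable route is the paper's: prove (iv) $\Rightarrow$ (iii) by a direct compactness argument on $G/G_c$, prove (iii) $\Rightarrow$ (ii) via \cite{CM09}*{Proposition~7.1} and the reflection argument, use (ii) $\Rightarrow$ (i) (Lemma~\ref{lem:Levi:decom}), and close the cycle with (i) $\Rightarrow$ (iv) via the $KAK$/Bruhat decomposition.
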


\begin{proof}
(i) $\Rightarrow$ (ii) is well-known, see \cite[Section~17.1]{Gar97}. 

\medskip \noindent 
(ii) $\Rightarrow$ (i)  is contained in Lemma~\ref{lem:Levi:decom}.

\medskip \noindent 
(i) $\Rightarrow$ (iii):
Since $G$ is strongly transitive on $\partial \Delta$, it is clear that $G$ cannot fix any point of $\partial \Delta$. Moreover, by strong transitivity, the stabilizer $G_c$ of a chamber at infinity $c$ is transitive on the chambers opposite $c$, and hence, also on the collection of apartments whose boundaries contain $c$. Now, any chamber $x$ in $\Delta$ is contained in such an apartment. In other words, if we fix an apartment $A$  whose boundary contains $c$, then $A$ contains a representative of every $G_c$-orbit of chambers.  Now our hypothesis ensures that $\Stab_G(A)$ acts cocompactly on $A$. This implies that the translation subgroup of $\Stab_G(A)$, which is of finite index, also acts cocompactly on $A$. This translation subgroup acts trivially on $\partial A$, and it is thus contained in $G_c$. It follows that $G_c$ acts cocompactly on $\Delta$. 

\medskip \noindent 
(iii) $\Rightarrow$ (ii):
By hypothesis $G$ acts cocompact on $\Delta$ and without a fixed point in $\partial \Delta$. Let $c \in \Ch(\partial A)$ be such that  $G_c$ acts cocompactly on $\Delta$. Then $G_c$ is transitive on the set of chambers opposite $c$ by \cite[Proposition 7.1]{CM09}. Since $G$ does not fix any point of $\partial \Delta$, it must contain an element $g$ mapping $c$ to a chamber $d$ sharing no simplex with $c$. Let now $A$ be an apartment whose boundary contains both $c$ and $d$. By the same arguments as in the proof of Lemma~\ref{lem:ST:criterion}, we see that every reflection associated to a wall separating $c$ from $d$ can be realised by an element of $\Stab_G(\partial A)$. The set of those reflections is easily seen to be a parabolic subgroup of the spherical Weyl group. Since $c$ and $d$ do not share any simplex, they are not contained in a common proper subresidue; therefore the parabolic subgroup in question must be the whole Weyl group. This implies that $\Stab_G(\partial A)$ is transitive on the chambers of $\partial A$. It follows that $G$ is transitive on $\Ch(\partial \Delta)$. Therefore, for all chambers $c' \in \Ch(\partial \Delta)$ the stabilizer $G_{c'}$ is transitive on the chambers opposite $c'$. From here we deduce that $G$ acts transitively on the set of all apartments in $\Delta$. Thus the strong transitivity of $G$ on $\partial \Delta$ follows. 

\medskip \noindent 
(i) $\Rightarrow$ (iv):
If $G$ is strongly transitive on $\Delta$ and $K$ denotes the stabilizer of a special vertex, then $K$ is transitive on the set of chambers at infinity: this is equivalent to the $KAK$-decomposition of $G$, whose existence follows from the Bruhat decomposition of $G$ with respect to the affine BN-pair. The desired implication follows. 

\medskip \noindent 
(iv) $\Rightarrow$ (iii):
 Let $K \leq G$ be a compact open subgroup having finitely many orbits of chambers at infinity and let $c \in \Ch(\partial \Delta)$. 

As the action of $G$ on $\partial \Delta$ is continuous (here we endow $\partial \Delta$ with the cone topology induced from $\Delta \cup \partial \Delta$), every $G$-orbit on $\Ch(\partial \Delta)$ is a finite union of $K$-orbits which by continuity are also compact in the cone topology (to see this use the finiteness of $K$-orbits). To prove the desired conclusion, it is enough to show that for every chamber at infinity $c \in \Ch(\partial \Delta )$, $G/G_{c}$ is compact in the quotient topology.

Take a class $hG_{c}$ in $G/G_{c}$. As $G (c) =K(g_{1}(c)) \sqcup ... \sqcup K(g_{m}(c))$, which is a finite union of $K$-orbits by hypothesis, there exists $i \in \left\lbrace 1,...,m \right\rbrace $ such that $h (c) \in K(g_{i}(c))$. So we have $h \in Kg_{i}G_{c}$ and thus $G/G_{c} \subset Kg_{1}G_{c} \sqcup ... \sqcup Kg_{m}G_{c}$. Moreover $G/G_{c}= \left( \cup_{i =1}^{m} Kg_{i} \right) G_{c}$. As $\cup_{i=1}^{m} Kg_{i}$ is compact in $G$, it follows that the projection from $G$ of $\cup_{i=1}^{m} Kg_{i}$ on $G/G_{c}$ is onto, thereby showing that $G/G_c$ is also compact. Thus the conclusion follows.
\end{proof}

\section{Gelfand pairs for groups acting on Euclidean buildings}
\label{sec:Gelfand-pair}

Let $G$ be a locally compact group and $K \leq G$ a compact subgroup. We denote by $C_{c}^{K}(G)$ the space of continuous, compactly supported   functions $\phi : G \to \CC$ that are  \textbf{$K$-bi-invariant}, i.e., functions that satisfy the equality
$\phi(kgk{'})=\phi(g)$  for every  $ g \in G $ and  all $ k, k{'} \in K$.
We view the $\CC$-vector space $C_{c}^{K}(G)$ as an algebra whose multiplication  is given by   the convolution product
\[
 \phi \ast \psi \colon x \mapsto  \int_{G} \phi(xg )\psi(g^{-1})d\mu(g).
\]
The pair $(G,K)$ is called a \textbf{Gelfand pair} if the convolution algebra $C_{c}^{K}(G)$ is commutative. A good introduction to the theory of Gelfand pairs may be consulted in \cite{vD09}.

The goal of this section is to prove the remaining implications from Theorem~\ref{thm:main-thm}. This will be achieved at the end of the section. Moreover, the technical core of the proof relies on the following lemma.

\begin{lemma}\label{lem:tech}
Let $G$ be a locally compact group acting continuously and properly by type-preserving automorphisms on a locally finite thick Euclidean building $\Delta$.  Suppose that the $G$-action is cocompact, without a fixed point in $\partial \Delta$ and that it is not  strongly transitive. 

Then, for any stabilizer $ K \leq G$ of a special vertex, there exist two strongly regular hyperbolic elements $\alpha, \beta \in G$ such that $\alpha \beta \not \in K\beta K \alpha K$. 
\end{lemma}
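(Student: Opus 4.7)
The plan is to build two strongly regular hyperbolic elements $\alpha,\beta \in G$, each with translation axis passing through the special vertex $v_0$ fixed by $K$, whose boundary configurations are ``separated'' by the failure of strong transitivity; this separation will prevent $\alpha\beta$ from being expressible as $k_1\beta k_2\alpha k_3$ with $k_i \in K$.

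First, I exploit the flexibility of Theorem~\ref{thm:ExistenceStronglyReg}. Inspecting its proof via Proposition~\ref{prop:TechSRH}, the strongly regular hyperbolic elements produced have translation axis equal to a prescribed strongly regular geodesic line $\rho$ chosen in any apartment, in particular any apartment through $v_0$ along any strongly regular direction through $v_0$. Hence I may produce strongly regular hyperbolic elements of $G$ with translation axis through $v_0$ in a prescribed apartment containing $v_0$. Next, I invoke the hypotheses: since $G$ is cocompact on $\Delta$, has no fixed boundary point, and is \emph{not} strongly transitive on $\Delta$, Theorem~\ref{transitivity2} yields that $G$ is not strongly transitive on $\bd\Delta$ either. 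By Lemma~\ref{lem:ST:criterion}, there is some $c_- \in \Ch(\bd\Delta)$ for which $G_{c_-}^0$ has more than one orbit on $\Opp(c_-)$, and by Lemma~\ref{lem:Levi} the number of orbits is finite. Pick $c_+', c_+'' \in \Opp(c_-)$ in distinct $G_{c_-}^0$-orbits, and let $A',A''$ be the apartments determined by the pairs $(c_-,c_+')$ and $(c_-,c_+'')$. After replacing $v_0$ by a vertex in its $G$-orbit (permissible since $K$ is only determined up to conjugation), I arrange that $v_0$ lies in a common sector of $A'$ and $A''$ based at $c_-$, using that any two apartments sharing an ideal chamber share a sector toward it and that cocompactness ensures vertices of the $G$-orbit of $v_0$ are available in any such sector. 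Using the first step, I fix strongly regular hyperbolic $\alpha \in G$ with translation apartment $A'$ and axis through $v_0$ with endpoints in the interiors of $c_-$ and $c_+'$, and similarly $\beta$ with apartment $A''$ and axis through $v_0$ with endpoints in $c_-$ and $c_+''$.

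Finally, I argue by contradiction that $\alpha\beta \notin K\beta K\alpha K$. If $\alpha\beta = k_1\beta k_2 \alpha k_3$ with $k_i \in K$, then evaluating at $v_0$ and using $k_3(v_0)=v_0$ gives $\alpha\beta(v_0) = k_1\beta k_2\alpha(v_0)$, and more generally, by repeating the argument with $\alpha^n,\beta^n$ in place of $\alpha,\beta$ after suitably adjusting the $k_i$, one obtains analogous asymptotic relations. Proposition~\ref{prop:DynamRegular} applied to $\alpha^n$ shows that $\alpha^n\beta(v_0)$ accumulates in the cone topology at a boundary point of $A'$ inside the attracting chamber $c_+'$, while Proposition~\ref{lemma_2} applied to $\beta^n$ and the finite set $\{\alpha(v_0), k_2\alpha(v_0) : k_2 \in K/\Stab_K(\alpha(v_0))\}$ (finite because $K$ is compact and acts with discrete orbits) shows that the geodesic from $v_0$ to $\beta^n(k_2\alpha(v_0))$ eventually enters the apartment $A''$ and then proceeds toward $c_+''$. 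Passing to the limit, the equality of the two $G$-orbits along these geodesics at infinity forces an element of $K$ (obtained as a subsequential limit of the $k_1$'s coming from each $n$, using compactness of $K$) to map $c_+'$ onto a chamber in the $K$-orbit of $c_+''$; since any such $K$-element fixes $v_0$ and hence preserves $c_-$, it lies in $G_{c_-}^0$, contradicting the fact that $c_+', c_+''$ were chosen in distinct $G_{c_-}^0$-orbits.

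The main obstacle is this final passage from the algebraic identity to a genuine $G_{c_-}^0$-equivalence at infinity. This requires combining the contracting dynamics at infinity (Proposition~\ref{prop:DynamRegular}) with the geometric attraction toward the translation apartment (Proposition~\ref{lemma_2}), together with a compactness argument to turn the family of relations, indexed by $n$, into a single group element sending $c_+'$ to $c_+''$ while fixing $c_-$. Making this compactness extraction precise, and checking that the resulting element indeed fixes a point of $\Delta$ (so lies in $G_{c_-}^0$ rather than merely $G_{c_-}$), is the technical heart of the argument.
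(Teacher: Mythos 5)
Your overall architecture (strongly regular elements through the base vertex, the dynamics of Propositions~\ref{prop:DynamRegular} and~\ref{lemma_2}, a separation at infinity that obstructs the double-coset identity) is in the right spirit, but the separation you chose is the wrong one and the final step is false. The fatal error is the claim that an element of $K$ ``fixes $v_0$ and hence preserves $c_-$'': stabilizing a vertex does not force an automorphism to stabilize any chamber at infinity (for $\SL_2(\QQ_p)$ the vertex stabilizer $\SL_2(\ZZ_p)$ acts transitively on the boundary of the tree), so the limit element you extract from the $k_1$'s has no reason to lie in $G_{c_-}$, let alone in $G_{c_-}^0$, and no contradiction with the choice of $c_+',c_+''$ in distinct $G_{c_-}^0$-orbits is obtained. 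More structurally, ``lying in distinct $G_{c_-}^0$-orbits on $\Opp(c_-)$'' is not an invariant that the compact group $K$ can detect, whereas the relation $\alpha\beta\in K\beta K\alpha K$ involves only $K$. The paper instead uses the negation of condition (iv) of Theorem~\ref{transitivity2} to find a chamber $c$ at infinity lying \emph{outside the $K$-orbit} of $\Ch(\bd A)$, where $A$ is the translation apartment of $\alpha$, and takes $\beta$ with attracting endpoint in a cone neighbourhood $U$ disjoint from the compact set $K\bd A$; then $\beta^n k\alpha^m(x_0)\in U$ for all $k\in K$ by the attracting dynamics of $\beta$, while $k'\alpha^m\beta^n(x_0)\notin U$ because $\alpha^m$ pushes $U$ toward $K\bd A$. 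That is what genuinely separates the two double cosets.

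Three further problems. First, you invoke Lemma~\ref{lem:Levi} to get finitely many $G_{c_-}^0$-orbits, but that lemma assumes $G$ is strongly transitive on $\bd\Delta$, which you have just shown fails in your setting (this is inessential to your plan, but it is an incorrect citation). Second, Proposition~\ref{prop:TechSRH} only produces hyperbolic elements whose axis \emph{shares a long segment} with the prescribed strongly regular line; it does not give elements whose translation apartment and attracting/repelling chambers are exactly the prescribed $A'$, $c_+'$, $c_-$, so you cannot assume $\alpha$ and $\beta$ carry the exact boundary data your argument needs. Third, $\alpha\beta\in K\beta K\alpha K$ does not imply $\alpha^n\beta^n\in K\beta^nK\alpha^nK$, so ``repeating the argument with $\alpha^n,\beta^n$'' requires restructuring the proof so that the elements you ultimately output are themselves the high powers (as the paper does, setting $\alpha=a^m$ and $\beta=b^n$ only at the very end).
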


\begin{proof}
Let $x_0 \in \Delta$ be a special vertex and set $K: = G_{x_0}$. By Theorem~\ref{thm:ExistenceStronglyReg}, the group $G$ contains a strongly regular hyperbolic element $a$. We denote by $A$ its unique translation apartment. 

Furthermore, by Theorem~\ref{transitivity2}, the group $K$ has infinitely many orbits of chambers at infinity. Since $\partial A$ contains only finitely many chambers, we may thus find some chamber $c$ which does not belong to the $K$-orbit of any chamber in $\partial A$. 

We endow the set $\Delta \cup \partial \Delta$ with the cone topology (see~\cite[Chap.II.8]{BH99} for the definitions and properties), which turns $\Delta \cup \partial \Delta$ into a compact Hausdorff space. Thus $\Delta \cup \partial \Delta$ becomes a normal space as well. Moreover, each chamber at infinity is a compact subset of $\partial \Delta$; so is the visual boundary of every apartment. Since the $G$-action on  $\Delta \cup \partial \Delta$  is continuous and since $K$ is compact, we infer that $K \partial A$ is compact.  Moreover, since $K \partial A$ is a union of chambers,  all of which are different form $c$, it follows that the interior of $c$ is disjoint from $K \partial A$. 

Let $A'$ be an apartment containing $x_0$ and such that $c \in \Ch(\partial A')$. In view of Lemma~\ref{existance_reg_element}, we may thus find a strongly regular geodesic line $\rho \colon \RR \to \Delta$ such that $\rho(0) = x_0$, $\{\rho(nC)\}_{n \geq 0}$ is a set of special vertices of the same type, for some constant $C >0$, and $\rho(\infty) : = \lim\limits_{t \to \infty} \rho(t)$ lies  in the interior of $c$.

We consider now the standard open neighborhoods of $\rho(\infty)$, in the cone topology, which are of the form 
$$
U(\rho, r, \epsilon)=\{z \in \Delta \cup \partial \Delta \; | \; d(z, x_{0})>r \text{ and } d(p_{r}(z), \rho(r)) <\epsilon \},
$$ 
with $r,\epsilon >0$, and where $p_r(z)$ denotes the unique point on the geodesic from $x_0$ to $z$ at distance $r$ from $x_0$ (see~\cite[Chap.II.8]{BH99}).
Since $\rho(\infty)  \notin  K A \cup K\partial A$, which is a compact, thus closed set in the cone topology, there exist $r$ big enough and $\epsilon$ small enough such that for every $x \in KA \setminus \overline{B}(x_{0},r)$  we have $d(p_{r}(x), \rho(r)) \geq \epsilon$. Fix some $r$ and $\epsilon$ with this property. We have that $( K \partial A \cup K A \setminus \overline{B}(x_{0},r)) \cap U(\rho, r, \epsilon) = \varnothing$.

We now apply Proposition~\ref{prop:TechSRH} to the geodesic line $\rho$. This ensures that we may find a strongly regular hyperbolic element $b \in G$ admitting a translation axis which shares a segment of arbitrarily large length with the geodesic ray $\rho(\RR_+)$. Denoting by $\eta_+, \eta_-$ respectively the attracting and repelling endpoints of such a translation axis of  $b$, it follows that, upon replacing $b$ by $b^{-1}$, the geodesic ray $[x_0, \eta_+)$ shares a geodesic segment of arbitrarily large length with the geodesic ray $\rho(\RR_+)$, which does not necessarily pass through the vertex $x_0$. To conclude, we find a strongly regular hyperbolic element $b \in G$ such that $\eta_+ \in U(\rho, r, \epsilon)$. 

The elements $\alpha$ and $\beta$ that we are looking for will be defined as suitable high powers of $a$ and $b$ respectively. We now proceed to find those powers.  

Let $\rho' \colon \RR_+ \to \Delta$ be the geodesic ray such that $\rho'(0) = x_0$ and $\rho'(\infty) = \eta_+$. Since $\eta_+  \in U(\rho, r, \epsilon)$, we may find $r', \epsilon'>0$ such that
$$
U: = U(\rho', r', \epsilon') \subset U(\rho, r, \epsilon).
$$
Since for each $k \in K$ and $\xi \in \partial A$ we have that $k\xi \notin U(\rho, r, \epsilon)$, there exists $r_{k\xi}$ and $\epsilon_{k\xi}$ such that 
$$
U([x_0, k\xi), r_{k\xi}, \epsilon_{k\xi}) \cap U = \varnothing.
$$
The collection $\{U([x_0, k\xi), r_{k\xi}, \frac 1 2 \epsilon_{k\xi})\}_{k \in K, \xi \in \partial A}$ forms an open covering of the compact set $K \partial A$, from which we may extract a finite subcovering, corresponding to $k_1, \dots, k_n \in K$ and $\xi_1, \dots, \xi_n \in \partial A$. Set 
$$
r'' = \max \{r_{k_1\xi_1}, \dots, r_{k_n \xi_n}\}
\hspace{1cm} \text{and} \hspace{1cm}
\epsilon'' = \frac 1 2 \min \{\epsilon_{k_1\xi_1}, \dots, \epsilon_{k_n \xi_n}\}.
$$
Then by the triangle inequality, for each $k \in K$ and $\xi \in \partial A$, there is some $i \in \{1, \dots, n\}$ such that 
$$
U_{k\xi} = U([x_0, k\xi), r'', \epsilon'') \subset U([x_0, k_i\xi_i), r_{k_i\xi_i}, \epsilon_{k_i\xi_i}).
$$ 
In particular, $U_{k\xi}$ is an open neighborhood of $k\xi$ which is furthermore disjoint from $U$. Since $K$ fixes $x_0$, we observe that $U_{k\xi} = k U_\xi$ for all $\xi \in \partial A$. This implies that  $U \cap K U_{\xi} = \varnothing$
for all $\xi \in \partial A$.

By Proposition~\ref{prop:DynamRegular}, the sequence $\{a^m(\eta_+)\}_{m \geq 0}$ converges to some $\gamma \in \partial A$. In particular, for all  $m >0$ sufficiently large, we have $a^m(\eta_+) \in U_\gamma$. For such an $m$, we may thus find $r'''$ and $\epsilon'''$ with
$$ 
U' := U([x_{0},\eta_{+}), r''', \epsilon''') \subset U\cap a^{-m}(U_{\gamma}).
$$
We deduce that  $a^{m}(U') \subset U_{\gamma}$ and hence  $ka^{m}(U') \subset kU_{\gamma}=U_{k\gamma}$ for all $k \in K$. Since $U \cap K U_{\gamma} = \varnothing$ we have
\begin{equation}
\label{crucial}
ka^{m}(U') \cap U= \varnothing, \text{ for every }k \in K.
\end{equation}

Recall that $\eta_+$ is the attracting endpoint of a translation axis of $b$. Therefore, for each $y \in \Delta$, the sequence $\{b^n(y)\}_{n \geq 0}$ converges to $\eta_+$. In particular, for each finite set of points $F \subset \Delta$, we may find a sufficiently large $n$ such that $b^n(F) \subset U'$.  We apply this argument to the set $F = x_{0} \cup Ka^{m}(x_{0})$, which is indeed finite since $\Delta$ is locally finite. In this way we obtain a suitable large $n$.

Thus, for this $n$ found above we have: 
$$b^n (x_0) \in U' 
\hspace{1cm} \text{and} \hspace{1cm}
b^{n}k a^{m}(x_{0}) \in U' \subset U$$
for each $k \in K$. On the other hand, by~(\ref{crucial}), we also  have 
$$
k'a^{m}b^{n}(x_{0}) \notin U
$$
for all $k' \in K$. 
Therefore $K a^m b^n K \cap K b^n K a^m K =\varnothing$, since otherwise there would exist $k, k' \in K$ such that $b^{n} k a^{m} (x_0) = k' a^{m} b^{n}(x_0)$. 
Finally, setting $\alpha = a^m$ and $\beta = b^n$, we obtain $\alpha \beta \not \in K \beta K \alpha K$.
\end{proof}

We are now able to complete the proof of the main theorem. 

\begin{proof}[End of proof of Theorem~\ref{thm:main-thm}]
(i) $\Leftrightarrow$ (ii) is covered by Theorem~\ref{transitivity2}.

\medskip \noindent
(i) $\Rightarrow$ (iii) follows from \cite[Corollary~4.2.2]{Matsum}. 

\medskip \noindent
(iii) $\Rightarrow$ (i)  
Assume $G$ acts cocompactly on $\Delta$ and the $G$-action is \textit{not} strongly transitive. We need to prove that $(G, K)$ is \textit{not} a Gelfand pair, where $K: = G_{x_0}$ is the stabilizer of  a special vertex $x_0$. 

Assume first that $G$ fixes some point $\xi \in \partial \Delta$. It then follows from \cite[Theorem~M]{CaMo} that $G$ is not unimodular. Thus we are done in this case, since any locally compact group endowed with a Gelfand pair is unimodular by \cite[Proposition 6.1.2]{vD09}. 

We assume henceforth that $G$ does not fix any point in $\partial \Delta$. By Lemma~\ref{lem:tech}, we may find two strongly regular hyperbolic elements $\alpha, \beta \in G$ such that $\alpha \beta \not \in K \beta K \alpha K$. 
Set
$$\phi =\mathbf 1_{K \alpha K} \hspace{1cm} \text{and} \hspace{1cm}  \psi =\mathbf 1_{K \beta K}.
$$
Clearly the maps $\phi, \psi$ are both locally constant, hence continuous, and $K$-bi-invariant. Thus $\phi, \psi \in C_{c}^{K}(G)$ and we claim that $\phi * \psi \neq \psi * \phi$. 

Indeed, we first observe that
$$
\begin{array}{rcl}
\phi * \psi 
 (\alpha \beta) & = & \int_{G} \phi(\alpha \beta g)\psi(g^{-1})d\mu(g)\\
 & \geq &  \int_{\beta^{-1}K} \phi(\alpha \beta g)\psi(g^{-1})d\mu(g)\\
 & = & \int_{K} \phi(\alpha k)\psi(k^{-1}\beta)d\mu(k)\\
 &= & \mu(K)\\
 &> &0,
 \end{array}
$$
where $\mu$ is a  Haar measure on $G$.

On the other hand, consider
$$
\psi * \phi (\alpha \beta) =  \int_{G} \psi(\alpha \beta g)\phi(g^{-1})d\mu(g).
$$
For the integrand to be nonzero, we need that the variable $g$ satisfies both $g^{-1} \in K \alpha K$ and $\alpha \beta g \in K \beta K$. For such an element $g \in G$, we may find $k_1, k_2, k_3, k_4 \in K$ such that
$g^{-1} = k_1 \alpha k_2$ and $\alpha \beta g = k_3 \beta k_4$. This yields
$$
\alpha \beta = k_3 \beta k_4 g^{-1} =  k_3 \beta k_4 k_1 \alpha k_2 \in K \beta K \alpha K,
$$
which contradicts our choice of $\alpha$ and $ \beta $. Therefore, there is no choice of $g$ making the integrand nonzero in the integral defining $\psi * \phi (\alpha \beta)$. This implies that $\psi * \phi (\alpha \beta)=0$.
The claim stands proven, thereby confirming that the convolution algebra $C_{c}^{K}(G)$ is not commutative.
\end{proof}

\begin{bibdiv}
\begin{biblist}

\bib{Amann}{thesis}{
author={Amann, Olivier},
 title={Group of tree-automorphisms and their unitary representations},
 note={PhD thesis},
 school={ETH Z\"urich},
 year={2003},
 }

\bib{AB}{book}{
   author={Abramenko, Peter},
   author={Brown, Kenneth S.},
   title={Buildings},
   series={Graduate Texts in Mathematics},
   volume={248},
   note={Theory and applications},
   publisher={Springer},
   place={New York},
   date={2008},
}

\bib{APVM11}{unpublished}{
  author={Abramenko, P.},
  author={Parkinson, J.},
  author={Van Maldeghem, H.},
   title={A classification of commutative parabolic Hecke algebras},
  note={J. Algebra (to appear), arXiv  preprint  \textbf{1110.6214} },
date={2011}
}
\bib{Ballmann}{book}{
   author={Ballmann, Werner},
   title={Lectures on spaces of nonpositive curvature},
   series={DMV Seminar},
   volume={25},
   note={With an appendix by Misha Brin},
   publisher={Birkh\"auser Verlag},
   place={Basel},
   date={1995},
}
\bib{BB}{article}{
   author={Ballmann, Werner},
   author={Brin, Michael},
   title={Orbihedra of nonpositive curvature},
   journal={Inst. Hautes \'Etudes Sci. Publ. Math.},
   number={82},
   date={1995},
   pages={169--209 (1996)},
   }

\bib{BL93}{article}{
  author={Benoist, Y.},
  author={Labourie, F.},
   title={Sur les diff\'eomorphismes d'Anosov affines \`a feuilletages stable et instable diff\'erentiables},
   journal={Invent. Math},
   volume={111},
   number={2}
   pages={285-308},
  date={1993},
note={(French, with French summary)}
}

\bib{BH99}{book}{
  author={Bridson, M.},
  author={Haefliger, A.},
  title={Metric Spaces of Non-Positive Curvature},
    publisher={Springer-Verlag, Berlin},
   volume={319}
  date={1999},
}
\bib{BM}{article}{
   author={Burger, Marc},
   author={Mozes, Shahar},
   title={Groups acting on trees: from local to global structure},
   journal={Inst. Hautes \'Etudes Sci. Publ. Math.},
   number={92},
   date={2000},
   pages={113--150 (2001)},
}
\bib{CM09}{article}{
  author={Caprace, P-E.},
   author={Monod, N.},
   title={Isometry groups of non-positively curved spaces: structure theory},
   journal={J. Topol.},
   volume={2},
   number={4},
pages={661-700},
date={2009},
  }

\bib{CaMo}{article}{
   author={Caprace, Pierre-Emmanuel},
   author={Monod, Nicolas},
   title={Fixed points and amenability in non-positive curvature},
   journal={Math. Ann.},
   volume={356},
   date={2013},
   number={4},
   pages={1303--1337},
   }

\bib{FigaNebbia}{book}{
   author={Fig{\`a}-Talamanca, Alessandro},
   author={Nebbia, Claudio},
   title={Harmonic analysis and representation theory for groups acting on
   homogeneous trees},
   series={London Mathematical Society Lecture Note Series},
   volume={162},
   publisher={Cambridge University Press},
   place={Cambridge},
   date={1991},
}
\bib{Gar97}{book}{
  author={Garrett, P.},
  title={Buildings and Classical Groups},
    publisher={Chapman and Hall},
  date={1997},
}

\bib{Lec10}{article}{
  author={L\'{e}cureux, J.},
  title={Hyperbolic configurations of roots and Hecke algebras},
   journal={J. Algebra},
   volume={323},
   pages={1454-1467},
  date={2010},
}

\bib{Leeb}{book}{
   author={Leeb, Bernhard},
   title={A characterization of irreducible symmetric spaces and Euclidean
   buildings of higher rank by their asymptotic geometry},
   series={Bonner Mathematische Schriften [Bonn Mathematical Publications],
   326},
   publisher={Universit\"at Bonn Mathematisches Institut},
   place={Bonn},
   date={2000},
}

\bib{Matsum}{book}{
   author={Matsumoto, Hideya},
   title={Analyse harmonique dans les syst\`emes de Tits bornologiques de
   type affine},
   language={French},
   series={Lecture Notes in Mathematics, Vol. 590},
   publisher={Springer-Verlag},
   place={Berlin},
   date={1977},
}
\bib{Ol}{article}{
   author={Ol{\cprime}{\v{s}}anski{\u\i}, G. I.},
   title={Classification of the irreducible representations of the
   automorphism groups of Bruhat-Tits trees},
   language={Russian},
   journal={Funkcional. Anal. i Prilo\v zen.},
   volume={11},
   date={1977},
   number={1},
   pages={32--42, 96},
}
\bib{Parkinson}{article}{
   author={Parkinson, James},
   title={Buildings and Hecke algebras},
   journal={J. Algebra},
   volume={297},
   date={2006},
   number={1},
   pages={1--49},
}

\bib{PR72}{article}{
   author={Prasad, Gopal},
   author={Raghunathan, M. S.},
   title={Cartan subgroups and lattices in semi-simple groups},
   journal={Ann. of Math. (2)},
   volume={96},
   date={1972},
   pages={296--317},
}

\bib{Ron89}{book}{
  author={Ronan, M.},
  title={Lectures on Buildings},
    publisher={Academic Press, INC. Harcourt Brace Jovanovich, Publishers},
  date={1989 },
}

\bib{Swe99}{article}{
  author={Swenson, E. L.},
  title={A cut point theorem for $\CAT(0)$ groups},
   journal={J. Differential Geom.},
   volume={53},
number={2}
   pages={327-358},
  date={1999},
}

\bib{vD09}{book}{
  author={van Dijk, G.},
  title={Introduction to Harmonic Analysis and Generalized Gelfand Pairs},
    publisher={Walter de Gruyter Studies in Mathematics 36},
  date={2009 },
}

\end{biblist}
\end{bibdiv}
\end{document}